\newtheorem{theorem}{Theorem}
\newtheorem{lemma}[theorem]{Lemma}
\newtheorem{proposition}[theorem]{Proposition}
\newtheorem{corollary}[theorem]{Corollary}
\newtheorem{example}{Example}
\newtheorem{remark}{Remark}
\newcommand{\egg}{\hfill \ensuremath{\Diamond}} 
\newtheorem{conjecture}{Conjecture}
\newtheorem{question}[conjecture]{Question}
\newcommand{\df}{\bf\boldmath}
\newcommand\supp[1]{  [#1] }
\newcommand\ns[1]{ \left\{ {#1} \right\} }
\newcommand\onee{{\mathbf{1}}}
\newcommand{\ind}{{\mathbf 1}}
\newcommand\one[1]{\ind_{#1}}
\newcommand{\eqd}{\stackrel{d}{=}}  
\renewcommand{\P}{{\mathbb P}}  
\newcommand{\Q}{{\mathbb Q}}
\newcommand{\Z}{{\mathbb Z}}
\newcommand{\R}{{\mathbb R}}
\newcommand{\N}{{\mathbb N}}
\newcommand{\E}{{\mathbb E}}     
\newcommand{\leb}{{\mathcal L}}  
\newcommand{\les}{{\mathcal L}}
\newcommand{\J}{{\mathcal J}}
\newcommand{\F}{{\mathcal F}}     
\newcommand{\A}{{\mathcal A}}     
\newcommand{\e}{\varepsilon}
\newcommand\gothh[1]{  {\mathcal #1} }
\newcommand\goth[1]{  {\mathfrak #1} }
\newcommand\fracc[2]{ #1 /#2 }      
\newcommand\Oo{ {\mathcal O } }
\newcommand\XX{{\mathbb M}}         
\newcommand\M{{\mathcal M}}        
\newcommand\onea[1]{ \mathbf{1}{ [#1]}    }
\newcommand\borelg{\goth{B}}
\newcommand\gothhh[1]{  { #1} }
\newcommand\ronnn{{|}}
\newcommand{\GZF}{\Upsilon_{\mathbb{C}}}
\newcommand{\GZH}{\Upsilon_{\mathbb{D}}}
\newcommand{\bGZH}{\widehat{\Upsilon}_{\mathbb{D}}}
\newcommand\nss[1]{ \left\{ {\ns{#1}} \right\} }
\DeclareMathOperator{\var}{Var}
\DeclareMathOperator{\cov}{Cov}
\newcommand{\erk}{\hfill \ensuremath{\Diamond}} 
\newcommand{\prob}{\xrightarrow{\P}}
\begin{document}

\title[Insertion and Deletion Tolerance of Point Processes]
{Insertion and Deletion Tolerance \\ of Point Processes}

\date{14 July 2010; revised 24 December 2012}

\author[A.\ E.\ Holroyd]{Alexander E.\ Holroyd}
\address[A.\ E.\ Holroyd]{Microsoft Research, 1 Microsoft Way,
Redmond, WA 98052, USA} \email{holroyd at microsoft.com}
\urladdr{http://research.microsoft.com/$\sim$holroyd/}

\author[T.\ Soo]{Terry Soo}
\address[T.\ Soo]{Department of Mathematics and Statistics, University of Victoria,
PO BOX 3060 STN CSC, Victoria, BC V8W 3R4, Canada} \email{tsoo at uvic.ca}
\urladdr{http://www.math.uvic.ca/$\sim$tsoo/}

\thanks{Funded in part by Microsoft Research (AEH) and NSERC (both authors)}
\subjclass[2010]{Primary 60G55} \keywords{point process, finite energy
condition, stable matching, continuum percolation}

\begin{abstract}
We develop a theory of insertion and deletion tolerance for point processes.
A process is insertion-tolerant if adding a suitably chosen random point
results in a point process that is absolutely continuous in law with respect
to the original process.  This condition and the related notion of
deletion-tolerance are extensions of the so-called finite energy condition
for discrete random processes.  We prove several equivalent formulations of
each condition, including versions involving Palm processes.  Certain other
seemingly natural variants of the conditions turn out not to be equivalent.
We illustrate the concepts in the context of a number of examples, including
Gaussian zero processes and randomly perturbed lattices, and we provide
applications to continuum percolation and stable matching.
\end{abstract}
\maketitle

\section{Introduction}

Let $\Pi$ be a point process on $\R^d$. Point processes will always be
assumed to be simple and locally finite. Let $\prec$ denote absolute
continuity in law; that is, for random variables $X$ and $Y$ taking values in
the same measurable space, $X \prec Y$ if and only if $\P(Y \in \A) = 0$
implies $\P(X \in \A) = 0$ for all measurable $\A$. Let $\borelg$ denote the
Borel $\sigma$-algebra on $\R^d$ and let $\les$ be Lebesgue measure. We say
that $\Pi$ is {\df{insertion-tolerant}} if for every $S \in \borelg$ with
$\les(S) \in (0, \infty)$, if $U$ is uniformly distributed on $S$ and
independent of $\Pi$, then
\begin{equation*}
\Pi + \delta_U \prec \Pi,
\end{equation*}
where $\delta_x$ denotes the point measure at $x \in \R^d$.

Let $\XX$ denote the space of simple point measures on $\R^d$. The support of
a measure $\mu \in \XX$ is denoted by
\begin{equation*}
[\mu]:= \ns{y \in \R^d : \mu(\ns{y}) =1}.
\end{equation*}
A {\df{$\boldsymbol{\Pi}$-point}} is an $\R^d$-valued random variable $Z$
such that $Z \in [\Pi]$ a.s.  A {\df{finite subprocess}} of $\Pi$ is a point
process $\gothh{F}$ such that $\gothh{F}(\R^d) < \infty$ and $[\gothh{F}]
\subseteq \supp{\Pi}$ a.s.    We say that $\Pi$ is {\df{deletion-tolerant}}
if for any  $\Pi$-point $Z$ we have
\begin{equation*}
\Pi -\delta_Z \prec \Pi.
\end{equation*}
For $S \in \borelg$ we define the restriction $\mu \ronnn_S$ of $\mu \in \XX$
to $S$ by
\begin{equation*}
\mu \ronnn_S(A) := \mu(A \cap S), \quad   \ A \in \borelg.
\end{equation*}

We will prove the following equivalences for insertion-tolerance and
dele\-tion-tolerance.

\begin{theorem}[Deletion-tolerance]
\label{equiv} 
Let $\Pi$ be a point process on $\R^d$.   The following are
equivalent.
\begin{enumerate}[(i)]
\item The point process $\Pi$ is
    deletion-tolerant.
\item
\label{minusF}
For any finite subprocess $\gothh{F}$ of $\Pi$, we
    have $\Pi -\gothh{F} \prec \Pi$.
\item 
\label{S} 
For all  bounded  $S \in \borelg$, we have
    $\Pi \ronnn_{S^c} \prec \Pi$.
    \item
    \label{conditionS}
    For all  bounded $S \in \borelg$,   we have  $\P\big(\Pi(S)= 0 \  \big| \  \Pi \ronnn_{S^c}\big) >0$ a.s.
\end{enumerate}
\end{theorem}

Condition \eqref{conditionS} has appeared previously under the name ``Condition ($\Sigma$).''   It appears to have been considered first in \cite{addthree,addtwo}; for further details see \cite{addone} and the references therein.



\begin{theorem}[Insertion-tolerance]
\label{thm-instol-eq}
Let $\Pi$ be a point process on $\R^d$. The following are equivalent.
\begin{enumerate}[(i)]
\item
The point process $\Pi$ is insertion-tolerant.
\item \label{finiteadd} For any Borel sets $S_1, \ldots, S_n$ of positive
    finite Lebesgue measure, if $U_i$ is a uniformly random point in
    $S_i$, with $U_1, \ldots, U_n$ and $\Pi$ all independent, then  $\Pi
    + \sum_{i=1} ^n \delta_{U_i} \prec \Pi$.
\item
\label{weakcond} If $(X_1, \ldots, X_n)$ is a random vector in $(\R^d)^n$
 that admits a conditional law given $\Pi$ that is absolutely continuous
 with respect to Lebesgue measure a.s., then    $\Pi + \sum_{i=1} ^n
 \delta_{X_i} \prec \Pi$.
\end{enumerate}
\end{theorem}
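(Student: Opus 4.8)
The plan is to prove the implications (ii) $\Rightarrow$ (i), (iii) $\Rightarrow$ (ii), (i) $\Rightarrow$ (ii) and (ii) $\Rightarrow$ (iii); together these give (i) $\Leftrightarrow$ (ii) $\Leftrightarrow$ (iii). In each part the points being added almost surely avoid $[\Pi]$ and one another (their conditional laws are absolutely continuous and $[\Pi]$ is a.s.\ Lebesgue-null), so $\Pi+\delta_U$, $\Pi+\sum_i\delta_{U_i}$ and $\Pi+\sum_i\delta_{X_i}$ are a.s.\ members of $\XX$ and all the comparisons take place within $\XX$. The implication (ii) $\Rightarrow$ (i) is the case $n=1$. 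For (iii) $\Rightarrow$ (ii): when $U_1,\dots,U_n$ and $\Pi$ are independent with $U_i$ uniform on $S_i$, a conditional law of $(U_1,\dots,U_n)$ given $\Pi$ is its unconditional law $\bigotimes_i\mathrm{Unif}(S_i)$, which has density $\prod_i\mathbf 1_{S_i}/\les(S_i)$ with respect to $\les^{\otimes n}$; so (iii) applies, and its $n=1$ case likewise gives (iii) $\Rightarrow$ (i).

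For (i) $\Rightarrow$ (ii) I would induct on $n$, the base case $n=1$ being (i). Fix a measurable $A\subseteq\XX$ with $\P(\Pi\in A)=0$, and write $\Lambda:=\Pi+\sum_{i=1}^{n-1}\delta_{U_i}$, so that $\Lambda\prec\Pi$ by the inductive hypothesis. Since $U_n$ is independent of $(\Pi,U_1,\dots,U_{n-1})$, Fubini's theorem gives
\[ \P\big(\Lambda+\delta_{U_n}\in A\big)=\frac{1}{\les(S_n)}\int_{S_n}\P\big(\Lambda\in A-\delta_u\big)\,du,\qquad A-\delta_u:=\{\mu\in\XX:\mu+\delta_u\in A\}, \]
where the map $(u,\mu)\mapsto\mu+\delta_u$ is jointly measurable, and for Lebesgue-a.e.\ $u$ one has $u\notin[\Lambda]$ almost surely (by Fubini, as $[\Lambda]$ is a.s.\ countable, hence Lebesgue-null), which is what legitimises rewriting $\{\Lambda+\delta_{U_n}\in A\}$ as $\{\Lambda\in A-\delta_u\}$ inside the integral. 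Applying $\Lambda\prec\Pi$ to the set $A-\delta_u$, the integrand vanishes whenever $\P(\Pi\in A-\delta_u)=0$, i.e.\ (for a.e.\ $u$) whenever $\P(\Pi+\delta_u\in A)=0$; and the latter holds for a.e.\ $u$, since $\frac{1}{\les(S_n)}\int_{S_n}\P(\Pi+\delta_u\in A)\,du=\P(\Pi+\delta_{U_n}\in A)=0$ by (i). Hence the integral is $0$, completing the induction.

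For (ii) $\Rightarrow$ (iii), let $\kappa(dx\mid\mu)$ be a regular conditional law of $(X_1,\dots,X_n)$ given $\Pi=\mu$; by hypothesis $\kappa(\cdot\mid\mu)\ll\les^{\otimes n}$ for $\mathrm{Law}(\Pi)$-a.e.\ $\mu$, and one may choose a jointly measurable density $f(\mu,x)$. For measurable $A\subseteq\XX$ with $\P(\Pi\in A)=0$,
\[ \P\Big(\Pi+\sum_i\delta_{X_i}\in A\Big)=\int_{\XX}\int_{(\R^d)^n}\mathbf 1\{\mu+\sum_i\delta_{x_i}\in A\}\,f(\mu,x)\,dx\,\mathrm{Law}(\Pi)(d\mu). \]
Partition $(\R^d)^n$ into countably many bounded product boxes $B=B^{(1)}\times\cdots\times B^{(n)}$ (say unit cubes); by monotone convergence, over the box index and over a truncation level $M<\infty$, it suffices to prove that for each such $B$ and $M$,
\[ \int_{\XX}\int_{B}\mathbf 1\{\mu+\sum_i\delta_{x_i}\in A\}\,\big(f(\mu,x)\wedge M\big)\,dx\,\mathrm{Law}(\Pi)(d\mu)=0. \]
But this is at most $M\int_{\XX}\int_{B}\mathbf 1\{\mu+\sum_i\delta_{x_i}\in A\}\,dx\,\mathrm{Law}(\Pi)(d\mu)=M\,\les^{\otimes n}(B)\,\P\big(\Pi+\sum_i\delta_{V_i}\in A\big)$, where $V_1,\dots,V_n$ are independent with $V_i$ uniform on $B^{(i)}$ and all independent of $\Pi$, and this probability is $0$ by (ii) since $\P(\Pi\in A)=0$.

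The step I expect to be the main obstacle is the inductive reduction in (i) $\Rightarrow$ (ii): peeling off the last inserted point $U_n$ and absorbing it into the target event $A$, so that inserting $n$ points into $\Pi$ becomes inserting $n-1$ points relative to the shifted event $A-\delta_{U_n}$. The rest is routine — (ii) $\Rightarrow$ (iii) is a standard box decomposition and truncation once (ii) is available, and what remains is measure-theoretic bookkeeping (joint measurability of $(u,\mu)\mapsto\mu+\delta_u$ and of the density $f$, and the Lebesgue-a.e.\ caveats about added points avoiding fixed atoms), which should cause no real trouble.
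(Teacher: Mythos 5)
Your proposal is correct, and it uses the same two core ingredients as the paper --- iterated one-point insertion of uniform points via Fubini and the sets $\A^x$, and a comparison of an arbitrary absolutely continuous density with uniform densities on bounded sets --- but packages them differently. The paper proves (i) $\Rightarrow$ (iii) in one step: Lemma \ref{monofinite} (monotonicity: $\Pi+\delta_U$ is again insertion-tolerant) is applied $n$ times inside Corollary \ref{weak}, and absolute continuity is transferred from the uniform insertion to the general one by comparing the two disintegrations \eqref{nk} and \eqref{nkprime}; the implications (iii) $\Rightarrow$ (ii) $\Rightarrow$ (i) are immediate. You instead prove (i) $\Rightarrow$ (ii) by induction on $n$ --- your inductive step, peeling off $U_n$ and passing to the event $\A^u$, is in substance the proof of Lemma \ref{monofinite} --- and then (ii) $\Rightarrow$ (iii) by partitioning $(\R^d)^n$ into unit boxes and truncating the conditional density at level $M$, which is a direct (contrapositive-free) form of the paper's \eqref{nk}/\eqref{nkprime} comparison. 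Both routes lean on the same standard, unproved measurability fact (a jointly measurable version of the conditional density), so you are at the paper's level of rigour there. What the paper's organization buys is that Corollary \ref{weak} is strictly stronger than (iii): it permits a random finite number of inserted points with a nice conditional law given $\Pi$, and that stronger form is what is invoked later (e.g.\ in the proof of Lemma \ref{oneH}); your argument proves exactly the stated theorem, though it would extend to the random-$n$ setting by conditioning on the number and location scale of points, precisely as the paper does with the events $E_{n,k}$.
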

In fact we will prove a stronger variant of Theorem \ref{thm-instol-eq}, in which
(ii),(iii) are replaced with a condition involving the insertion of  a {\em random}
finite number of points.

We say that a point process is {\df{translation-invariant}} if it is
invariant in law under all translations of $\R^d$.  In this case further
equivalences are available as follows.

\begin{proposition}
\label{anyS} A translation-invariant point process $\Pi$ on $\R^d$ is
in\-ser\-tion\--toler\-ant if and only if there exists $S\in \borelg$ with $\les(S)
\in (0, \infty)$ such that, if $U$ is uniformly distributed in $S$ and
independent of $\Pi$, then $\Pi +\delta_U \prec \Pi$.
\end{proposition}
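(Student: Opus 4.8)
The plan is to prove the nontrivial implication, since the forward implication is immediate from the definition (insertion-tolerance asserts the conclusion for \emph{every} $S$). So suppose there is a single set $S$ with $\les(S)\in(0,\infty)$ such that $\Pi+\delta_U\prec\Pi$ for $U$ uniform on $S$ and independent of $\Pi$. Call a Borel set $T$ with $\les(T)\in(0,\infty)$ \emph{good} if the same holds with $S$ replaced by $T$; the goal is to show every such $T$ is good. I would use translation-invariance together with three closure properties of the class of good sets plus a Steinhaus-type covering lemma; Theorems~\ref{equiv} and~\ref{thm-instol-eq} are not needed.

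First I would record the closure properties. (a) \emph{Good sets are closed under translation.} For $x\in\R^d$, apply the measurable shift map that moves every point of a configuration by $x$ to the relation $\Pi+\delta_V\prec\Pi$ (with $V$ uniform on a good $T$): absolute continuity in law is preserved by measurable maps, the image of $\Pi+\delta_V$ is $\Pi'+\delta_{V+x}$ where $\Pi'\eqd\Pi$ by translation-invariance, and $V+x$ is uniform on $T+x$ and independent of $\Pi'$; since $\prec$ depends only on the marginal laws, $T+x$ is good. (b) \emph{Good sets are closed under positive-measure subsets.} If $T'\subseteq T$ with $\les(T')>0$ and $\les(T\setminus T')>0$ (the degenerate case being trivial), a uniform point on $T$ is, independently of $\Pi$, a mixture of a uniform point on $T'$ and one on $T\setminus T'$, so the law of $\Pi+\delta_V$ is a convex combination $p\cdot(\text{law of }\Pi+\delta_{V'})+(1-p)\cdot(\text{law of }\Pi+\delta_{V''})$ with $p=\les(T')/\les(T)\in(0,1)$; since the left side is absolutely continuous with respect to the law of $\Pi$ and the two terms on the right are nonnegative measures, each — in particular the law of $\Pi+\delta_{V'}$ — is absolutely continuous with respect to the law of $\Pi$. (c) \emph{Good sets are closed under countable disjoint unions of finite total measure}, by the same mixture computation with a countable convex combination and the fact that a countable sum of measures absolutely continuous with respect to the law of $\Pi$ is again absolutely continuous with respect to it.

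Next I would prove the covering lemma: for any measurable $S$ with $\les(S)>0$, the countable family of translates $\{S+q:q\in\Q^d\}$ covers $\R^d$ up to a Lebesgue-null set. Indeed, let $E=\R^d\setminus\bigcup_{q\in\Q^d}(S+q)$ and suppose $\les(E)>0$; choosing balls so that $E\cap B$ and $S\cap B'$ have positive finite measure, the Steinhaus theorem gives that $(E\cap B)-(S\cap B')$ contains a nonempty open set, hence a rational point $q$, so there exist $e\in E$ and $s\in S$ with $e=s+q\in S+q$, contradicting $e\in E$. (This is the key point, and the reason we cannot make do with finitely many translates: $S$ need contain no ball.)

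Finally I would assemble the argument. Enumerate $\Q^d=\{q_n\}$ and disjointify, setting $T_n=(S+q_n)\setminus\bigcup_{m<n}(S+q_m)$; by the lemma $\bigcup_n T_n=\R^d$ up to a null set, and by (a) and (b) every $T_n$ of positive measure is good. Given an arbitrary Borel $S'$ with $\les(S')\in(0,\infty)$, the sets $S'\cap T_n$ are disjoint, their union is $S'$ up to a null set, and each of positive measure is good by (b); hence $S'$ is good by the countable mixture argument of (c) (the null part contributes nothing to the law of a uniform point on $S'$). Since $S'$ was arbitrary, $\Pi$ is insertion-tolerant. The main obstacle in carrying this out is the covering lemma and its interplay with the closure properties; the mixture manipulations in (a)–(c) are routine.
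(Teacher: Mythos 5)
Your proof is correct, but it takes a different route from the paper's. The paper argues per event: given $\A$ with $\P(\Pi+\delta_V\in\A)>0$ for $V$ uniform on the target set $T$, it uses Remark~\ref{fubini} to extract a positive-measure set $T'\subseteq T$ of locations $w$ with $\P(\Pi+\delta_w\in\A)>0$, then applies the Lebesgue density theorem to $T'$ and $S$ to produce a \emph{single} translate $z$ with $\les(T'\cap\theta_z S)>0$, and concludes from $\Pi+\delta_{\theta_z U}\prec\Pi$ (which, as in your step (a), follows from translation-invariance). You instead work at the level of sets rather than events: you isolate the closure properties of ``good'' sets under translation, positive-measure subsets and countable disjoint unions (all of which are correct routine mixture arguments, equivalent in substance to Remark~\ref{fubini}), and replace the density-point step by a Steinhaus-type covering lemma showing that countably many rational translates of $S$ cover $\R^d$ up to a null set, after which any target set is decomposed into good pieces. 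The two measure-theoretic inputs are close cousins (Steinhaus is usually proved via density points or convolution continuity), so neither argument is deeper than the other; the paper's version is shorter and needs only one translate per event, while yours is more structural, never fixes an event $\A$, and makes explicit that the law of $\Pi+\delta_{U'}$ for $U'$ uniform on an arbitrary $S'$ is a countable mixture of laws each dominated by one obtained from a translate of the original $S$ --- at the cost of the disjointification bookkeeping. Your observation that Theorems~\ref{equiv} and~\ref{thm-instol-eq} are not needed is consistent with the paper, which also proves Proposition~\ref{anyS} independently of them.
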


Let $\Pi$ be a translation-invariant point process with finite intensity;
that is,  $\E \Pi([0,1]^d) < \infty$.  We let $\Pi^*$ be its {\em Palm
version}.  See Section~\ref{palm} or \cite[Chapter 11]{MR1876169} for a
definition.  Informally, one can regard $\Pi^*$ as the point process $\Pi$
conditioned to have a point at the origin.

\begin{theorem}
\label{thm-instol-stat-eq} Let $\Pi$ be a translation-invariant ergodic point
process of finite intensity on $\R^d$  and let $\Pi ^{*}$ be its Palm
version. The following are equivalent.
\begin{enumerate}[(i)]
\item
The point process $\Pi$ is insertion-tolerant.
\item \label{original} $\Pi + \delta_0 \prec \Pi^*$.
\end{enumerate}
\end{theorem}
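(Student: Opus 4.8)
The plan is to run everything through Palm calculus, using the refined Campbell (Mecke) formula: for $\Pi$ translation-invariant with intensity $\lambda:=\E\Pi([0,1]^d)$, which must be positive and finite for $\Pi^*$ to be defined, and Palm version $\Pi^*$,
\[
\E\sum_{x\in[\Pi]}h(x,\theta_x\Pi)=\lambda\int_{\R^d}\E\,h(x,\Pi^*)\,dx
\]
for all measurable $h\colon\R^d\times\XX\to[0,\infty]$, where $\theta_v\colon\XX\to\XX$ is the bi-measurable shift sending a point at $y$ to a point at $y-v$ (so $\theta_x\Pi$ has a point at the origin whenever $x\in[\Pi]$, as does $\Pi^*$ a.s.). Two elementary ``null'' observations do most of the work. \textbf{(a)} If $\P(\Pi\in\A)=0$ then $\les(\{v\in\R^d:\theta_v\Pi\in\A\})=0$ a.s.: integrate $\ind_{\{\theta_v\Pi\in\A\}}$ over the unit cube, take expectations using $\P(\theta_v\Pi\in\A)=\P(\Pi\in\A)$, and sum over $\Z^d$-translates. \textbf{(b)} If $\P(\Pi^*\in\A)=0$ then a.s.\ there is no $x\in[\Pi]$ with $\theta_x\Pi\in\A$: apply the Campbell--Mecke formula to $h(x,\omega)=g(x)\ind_{\{\omega\in\A\}}$ with $g$ a strictly positive integrable density, so the nonnegative sum $\sum_{x\in[\Pi]}g(x)\ind_{\{\theta_x\Pi\in\A\}}$ has zero mean and hence vanishes a.s.

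For (i)$\Rightarrow$(ii), suppose $\Pi$ is insertion-tolerant and $\P(\Pi^*\in\A)=0$. Since both $\Pi^*$ and $\Pi+\delta_0$ carry a point at the origin a.s.\ (the latter because $\E\Pi(\{0\})=\lambda\les(\{0\})=0$), we may intersect $\A$ with $\{\mu\in\XX:\mu(\{0\})=1\}$ without affecting $\P(\Pi^*\in\A)$ or $\P(\Pi+\delta_0\in\A)$, so assume $\mu(\{0\})=1$ for all $\mu\in\A$. Put $\V:=\{\mu\in\XX:\theta_x\mu\in\A\text{ for some }x\in[\mu]\}$; a direct check shows $\V$ is invariant under every shift $\theta_v$. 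By (b), $\P(\Pi\in\V)=0$, so insertion-tolerance gives $\P(\Pi+\delta_U\in\V)=0$ for $U$ uniform on $[0,1]^d$ and independent of $\Pi$. Since $\V$ is shift-invariant and $\theta_u(\Pi+\delta_u)=\theta_u\Pi+\delta_0\eqd\Pi+\delta_0$, the probability $\P(\Pi+\delta_u\in\V)$ does not depend on $u$ and equals $\P(\Pi+\delta_0\in\V)$; averaging over $u\in[0,1]^d$ identifies this with $\P(\Pi+\delta_U\in\V)=0$. Finally, if $\Pi+\delta_0\in\A$ then, taking $x=0\in[\Pi+\delta_0]$, we get $\Pi+\delta_0\in\V$; hence $\P(\Pi+\delta_0\in\A)\le\P(\Pi+\delta_0\in\V)=0$, which is (ii).

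For (ii)$\Rightarrow$(i), suppose $\Pi+\delta_0\prec\Pi^*$, fix a Borel set $S$ with $\les(S)\in(0,\infty)$ and a set $\A$ with $\P(\Pi\in\A)=0$; it suffices to show $\P(\Pi+\delta_u\in\A)=0$ for a.e.\ $u\in S$. Using $\theta_u(\Pi+\delta_u)=\theta_u\Pi+\delta_0\eqd\Pi+\delta_0$ we get $\P(\Pi+\delta_u\in\A)=\P(\Pi+\delta_0\in\theta_u\A)$, which by hypothesis vanishes once $\P(\Pi^*\in\theta_u\A)=0$. Apply the Campbell--Mecke formula with $h(x,\omega)=\ind_{\{x\in[0,1]^d\}}\ind_{\{\omega\in\theta_u\A\}}$; since $\theta_x\Pi\in\theta_u\A$ is equivalent to $\theta_{x-u}\Pi\in\A$, this gives $\lambda\,\P(\Pi^*\in\theta_u\A)=\E\sum_{x\in[\Pi]\cap[0,1]^d}\ind_{\{\theta_{x-u}\Pi\in\A\}}$. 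Integrating over $u\in S$, using Tonelli, and substituting $v=x-u$ in the inner integral bounds the right-hand side by $\E\sum_{x\in[\Pi]\cap[0,1]^d}\les(\{v\in\R^d:\theta_v\Pi\in\A\})$, which is $0$ by (a). Hence $\int_S\P(\Pi^*\in\theta_u\A)\,du=0$, so $\P(\Pi^*\in\theta_u\A)=0$ for a.e.\ $u\in S$, as required. (One could instead invoke Proposition~\ref{anyS} to reduce to a single convenient $S$, but the argument works verbatim for every $S$.)

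The routine parts are the measurability bookkeeping---that $\theta_u\A$, $\V$, and the point process $\{x\in[\Pi]:\theta_x\Pi\in\A\}$ are measurable and transform correctly under shifts---and the Tonelli/change-of-variables manipulations. The one genuine obstacle is orchestrating the Campbell--Mecke identity so that the operation ``insert a point at a uniformly random $u$'' on $\Pi$ is matched with the ``distinguished point at the origin'' of $\Pi^*$; the key device is to restrict attention to point measures carrying a point at the origin, so that $\Pi+\delta_0$ and $\Pi^*$ can be compared on the same subspace, and then to exploit the shift-invariance of $\V$ (respectively, to average $\theta_u\A$ over $u$). I note that the argument as proposed uses only translation invariance and $\lambda\in(0,\infty)$; ergodicity, though assumed in the statement, is not needed for either implication.
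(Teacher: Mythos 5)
Your proof is correct. The direction (i)$\Rightarrow$(ii) is essentially the paper's argument, recast in direct rather than contrapositive form: where you restrict $\A$ to measures with a point at the origin and work with the shift-invariant event $\V=\{\mu:\theta_x\mu\in\A\text{ for some }x\in[\mu]\}$, the paper modifies $\A$ so that it ``does not care'' about a point at $0$ and uses the analogous event $\A'$ with $x$ restricted to $[0,1]^d$; in both cases the Palm property makes the event null for $\Pi$, while inserting a uniform point gives it positive probability (or, in your version, dominates $\P(\Pi+\delta_0\in\A)$). The converse is where you genuinely diverge. The paper proves (ii)$\Rightarrow$(i) by chaining $\Pi+\delta_U\eqd\theta_U(\Pi+\delta_0)\prec\theta_U\Pi^*\prec\theta_U\theta_{-Z}\Pi\prec\Pi$, which requires two auxiliary lemmas: Lemma \ref{originalfubini} and, crucially, Lemma \ref{wextrahead}, a weak shift-coupling statement producing a $\Pi$-point $Z$ with $\Pi^*\prec\theta_{-Z}\Pi$ (a relative of Thorisson's theorem, proved self-containedly in the paper). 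You avoid the shift-coupling ingredient entirely: you express $\lambda\,\P(\Pi^*\in\theta_u\A)$ via the Campbell--Mecke formula, integrate over $u\in S$, and annihilate the right-hand side using the elementary fact that a $\Pi$-null event has an a.s.\ Lebesgue-null set of shifts --- your observation (a), which is the same Fubini fact that underlies Lemma \ref{originalfubini}. This makes (ii)$\Rightarrow$(i) a single self-contained Palm computation valid for every $S$ (so Proposition \ref{anyS} is indeed not needed), whereas the paper's route buys a cleaner chain of absolute-continuity relations and isolates the shift-coupling lemma as a statement of independent interest. Your closing remark is also consistent with the paper: neither proof uses ergodicity, only translation invariance and $\lambda\in(0,\infty)$.
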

Condition \eqref{palmd} below appears to be the natural analogue of Theorem
\ref{thm-instol-stat-eq} \eqref{original} for deletion-tolerance.  However,
it is only sufficient and not necessary for deletion-tolerance.

\begin{theorem}
\label{suff} Let $\Pi$ be a translation-invariant point process of finite
intensity on $\R^d$  and let $\Pi^{*}$ be its Palm version.  If
\begin{equation}
\label{palmd}
\Pi^{*} -\delta_0 \prec \Pi,
\end{equation}
then $\Pi$ is deletion-tolerant.
\end{theorem}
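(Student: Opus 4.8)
The plan is to verify the definition of deletion-tolerance directly. Fix a $\Pi$-point $Z$ and a measurable set $\A \subseteq \XX$ with $\P(\Pi \in \A) = 0$; the goal is to show $\P(\Pi - \delta_Z \in \A) = 0$. The bridge to hypothesis \eqref{palmd} will be Mecke's equation (the refined Campbell theorem; see Section~\ref{palm} or \cite[Chapter 11]{MR1876169}), which rewrites an expected sum over the points of $\Pi$ as an integral against the law of the Palm version $\Pi^*$. Since $\R^d$ is Polish there is a regular conditional distribution $\kappa(\Pi, \cdot)$ of $Z$ given $\Pi$; because $Z \in [\Pi]$ a.s., $\kappa(\Pi, \cdot)$ is a.s.\ a probability measure supported on the countable set $[\Pi]$. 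We may assume the intensity $\lambda$ is positive, as otherwise $\Pi$ has no points and there is nothing to prove.

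Using $\one{\Pi - \delta_Z \in \A} = \sum_{x \in [\Pi]} \one{Z = x}\,\one{\Pi - \delta_x \in \A}$ (valid a.s.\ since $\Pi$ is simple and $Z \in [\Pi]$), I would condition on $\Pi$ to obtain
\[
\P(\Pi - \delta_Z \in \A) = \E \sum_{x \in [\Pi]} \kappa(\Pi, \{x\})\,\one{\Pi - \delta_x \in \A} .
\]
Applying Mecke's equation to the nonnegative measurable function $g(x, \mu) := \kappa(\mu, \{x\})\,\one{\mu - \delta_x \in \A}$, and writing $\theta_x$ for the shift that moves a point at the origin to $x$ — so that $\theta_x \Pi^*$ has a point at $x$ and $\theta_x \Pi^* - \delta_x = \theta_x(\Pi^* - \delta_0)$ — this yields
\[
\P(\Pi - \delta_Z \in \A) \;=\; \lambda \int_{\R^d} \E\big[\,\kappa(\theta_x \Pi^*, \{x\})\,\one{\theta_x(\Pi^* - \delta_0) \in \A}\,\big]\, dx \;\le\; \lambda \int_{\R^d} \P\big(\theta_x(\Pi^* - \delta_0) \in \A\big)\, dx ,
\]
where the inequality just uses $\kappa \le 1$.

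To finish, for each fixed $x$ hypothesis \eqref{palmd} gives $\Pi^* - \delta_0 \prec \Pi$, hence $\theta_x(\Pi^* - \delta_0) \prec \theta_x \Pi$; and translation-invariance of $\Pi$ gives $\theta_x \Pi \eqd \Pi$, so $\theta_x(\Pi^* - \delta_0) \prec \Pi$. Since $\P(\Pi \in \A) = 0$, the integrand vanishes for every $x$, and therefore $\P(\Pi - \delta_Z \in \A) = 0$, as required. The computation itself is short; the two places that call for a little care are the joint measurability of $(x, \mu) \mapsto \kappa(\mu, \{x\})$ (routine, since $\kappa$ is a probability kernel and $[\mu]$ is a measurable, locally finite set) and choosing the shift convention in Mecke's formula consistently with the identity $\theta_x \Pi^* - \delta_x = \theta_x(\Pi^* - \delta_0)$; neither is a genuine obstacle. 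It is worth remarking that the step $\kappa \le 1$ is lossy — a $\Pi$-point may concentrate its conditional mass on $[\Pi]$ arbitrarily, whereas the Palm point is spread out uniformly — which is consistent with \eqref{palmd} being merely sufficient rather than necessary for deletion-tolerance.
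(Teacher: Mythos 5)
Your proof is correct, and it reaches the conclusion by a genuinely different route from the paper, even though both arguments ultimately rest on the same two ingredients: the refined Campbell--Mecke identity \eqref{palmeqg} and translation-invariance of $\Pi$, combined with hypothesis \eqref{palmd}. The paper never touches the $\Pi$-point $Z$ directly: it invokes Theorem \ref{equiv} to replace deletion-tolerance by the condition $\Pi\ronnn_{S^c}\prec\Pi$ for sets $S$ of finite Lebesgue measure (so the work of handling an arbitrary $\Pi$-point is hidden in Lemmas \ref{unipick} and \ref{minieq}), and then applies \eqref{palmeqg} to an indicator function to transfer positive probability of the event $\ns{\exists\, x\in[\Pi]\cap S:\Pi-\delta_x\in\A}$ into positivity of $\P(\Pi^*-\delta_0\in\theta_{-x}\A)$ for some $x\in S$, after which \eqref{palmd} and translation-invariance finish. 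You instead verify the definition of deletion-tolerance directly by disintegrating: taking a regular conditional distribution $\kappa(\Pi,\cdot)$ of $Z$ given $\Pi$ (a.s.\ supported on $[\Pi]$), writing $\P(\Pi-\delta_Z\in\A)=\E\int\kappa(\Pi,\ns{x})\,\mathbf{1}[\Pi-\delta_x\in\A]\,d\Pi(x)$, and applying \eqref{palmeqg} with the crude bound $\kappa\le 1$ to get the quantitative inequality $\P(\Pi-\delta_Z\in\A)\le\lambda\int_{\R^d}\P\bigl(\theta_x(\Pi^*-\delta_0)\in\A\bigr)\,dx$, whose integrand vanishes for every $x$ under \eqref{palmd} and translation-invariance. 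What your approach buys is self-containedness (no appeal to Theorem \ref{equiv}) and an explicit bound; what it costs is the extra measure-theoretic bookkeeping you already flag, namely joint measurability of $(x,\mu)\mapsto\kappa(\mu,\ns{x})$, the shift convention giving $\theta_x\Pi^*-\delta_x=\theta_x(\Pi^*-\delta_0)$, and the degenerate case $\lambda=0$, all of which you handle correctly. Your closing observation that the step $\kappa\le1$ is lossy is also the right intuition for why \eqref{palmd} is sufficient but not necessary; compare Example \ref{site}.
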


In Section \ref{examples}, Example \ref{site} shows that a
dele\-tion-tolerant process need not satisfy \eqref{palmd}, while Example
\ref{counterS} shows that the natural analogue of Proposition~\ref{anyS}
fails for deletion-tolerance.   
Example \ref{site} will also show that boundedness of the set $S$ in Theorem \ref{equiv} (\ref{S}, \ref{conditionS})  cannot in general be replaced with the condition that it have finite volume.

\begin{remark}[More general spaces]
\label{gen} Invariant point processes and their Palm versions can
be defined on more general spaces than $\R^d$.  See
\cite{MR2371524,MR818219,MR2322698,newlast,lastjthp} for more information.
For concreteness and simplicity, we have chosen to state and prove Theorems
\ref{equiv}, \ref{thm-instol-eq}, \ref{thm-instol-stat-eq} and \ref{suff} in
the setting of $\R^d$, but they can easily be adapted to any complete
separable metric space endowed with: a group of symmetries that acts
transitively and continuously on it, and the associated Haar measure. We will
make use of this generality when we discuss Gaussian zero processes on the
hyperbolic plane in Proposition \ref{gausszeros}. \erk
\end{remark}
Next we will illustrate some applications of insertion-tolerance and
dele\-tion-tolerance in the contexts of continuum percolation and stable
matchings.  We will prove generalizations of earlier results.

The Boolean
continuum percolation model for point processes is defined as follows (see
\cite{roy}).  Let $\| \cdot\|$ denote the Euclidean norm on
$\R^d$.  For $R > 0$ and $\mu \in \XX$, consider the set
$\Oo(\mu):= \cup_{x \in \supp{\mu} } B(x,R)$, where $B(x,R):=
\ns{y \in \R^d: \|x-y\| < R}$ is the open ball of radius $R$
with center $x$.     We call $\Oo(\mu)$ the {\df{occupied
region}}.  The connected components of $\Oo(\mu)$ are called
{\df{clusters}}.

\begin{theorem}[Continuum percolation]
\label{percuniq}  Let $\Pi$ be a translation-in\-variant ergodic
insertion-tolerant point process on $\R^d$.  For any $R > 0$, the occupied
region $\Oo(\Pi)$ has at most one unbounded cluster a.s.
\end{theorem}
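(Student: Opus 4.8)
The plan is to adapt the classical Burton--Keane argument for uniqueness of the infinite cluster to the continuum, using insertion-tolerance as the substitute for the finite energy condition. First I would set up a proof by contradiction: suppose that with positive probability $\Oo(\Pi)$ has at least two unbounded clusters. By ergodicity, the number of unbounded clusters is an a.s.\ constant $k \in \{0,1,2,\ldots\} \cup \{\infty\}$, so we are assuming $k \geq 2$ (possibly $k = \infty$).

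The key step is to show $k \geq 2$ is impossible by manufacturing infinitely many ``trifurcation''-type configurations and deriving a counting contradiction. Fix a large ball $B = B(0,L)$ with $L$ chosen so that, with positive probability, at least two distinct unbounded clusters of $\Oo(\Pi \ronnn_{B^c})$ meet the sphere of radius $L$; this is possible for $L$ large by the assumption that $\Oo(\Pi)$ itself has $\geq 2$ unbounded clusters, together with the observation that clusters are built from balls of fixed radius $R$, so removing the points in $B$ only affects the occupied region inside $B^{R} := B(0,L+R)$. On this positive-probability event, consider inserting $m$ points $U_1,\ldots,U_m$, uniformly and independently placed in $B$, positioned so that the balls $B(U_i,R)$ chain together to link the two separate unbounded clusters: concretely, for a suitable fixed finite configuration of ball-centers inside $B$ whose union of $R$-balls is connected and reaches near the relevant parts of the sphere, a positive-density set of placements of $(U_1,\ldots,U_m)$ achieves the linking. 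Let $\A$ be the event (in the space of point measures) that the occupied region has a point configuration exhibiting: two disjoint ``arms'' going to infinity and a third local piece that, if joined, would merge them --- in other words an encounter point/region in the Burton--Keane sense. Insertion-tolerance, in the strong multi-point form of Theorem~\ref{thm-instol-eq}\eqref{finiteadd}, gives $\Pi + \sum_{i=1}^m \delta_{U_i} \prec \Pi$, and since the linked configuration occurs with positive probability for $\Pi + \sum \delta_{U_i}$, the event $\A$ (that $\Oo(\Pi)$ contains such an encounter region) has positive probability for $\Pi$ itself.

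Having produced encounter regions with positive probability, the remainder is the standard Burton--Keane counting argument adapted to the continuum. By translation-invariance and ergodicity, the density of encounter regions is a positive constant, so a box $[0,n]^d$ contains order $n^d$ disjoint encounter regions with high probability. On the other hand, each encounter region, by definition, corresponds to a branch point of a forest structure on the unbounded clusters intersected with the box, and a tree (or forest) with $\Theta(n^d)$ internal vertices of degree $\geq 3$ must have $\Theta(n^d)$ leaves on the boundary; but the number of such leaves is bounded by the number of disjoint crossings of $\partial [0,n]^d$, which is $O(n^{d-1})$. This contradiction for large $n$ forces $k \leq 1$. (The case $k = \infty$ is handled the same way, or by first reducing to $k$ finite via the usual argument that if $k = \infty$ then $k \geq 3$ with positive encounter-region density already.)

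The main obstacle, and the place where care is genuinely needed, is the geometric/combinatorial bookkeeping in the continuum: unlike the lattice case there is no canonical notion of a ``vertex'' of a cluster, so one must fix a careful definition of an encounter region (e.g., a small ball $B(x,\rho)$ such that $\Oo(\Pi) \cap B(x,\rho)$ is connected, meets the boundary sphere in exactly three disjoint arcs, and removing $\Oo(\Pi) \cap B(x,\rho)$ splits the local cluster into three pieces each of which connects to infinity) and verify that (a) these regions are genuinely produced with positive probability by the insertion step above, (b) distinct encounter regions centered at a well-separated net of points are ``disjoint'' in the sense needed for the Burton--Keane tree argument, and (c) the leaf-counting bound $O(n^{d-1})$ really holds. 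Making (a)--(c) precise --- in particular choosing $\rho$ small relative to $R$ and relative to the spacing of inserted points, and arguing that the inserted chain can be made to create exactly a threefold branch rather than an uncontrolled merger --- is the technical heart; the probabilistic input (insertion-tolerance) is then applied essentially as a black box.
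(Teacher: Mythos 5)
Your overall strategy (Burton--Keane with insertion-tolerance in place of finite energy) is the right one, but the core construction has a genuine gap: you try to manufacture encounter/trifurcation regions starting from only \emph{two} unbounded clusters meeting $\partial B(0,L)$, linked by an inserted chain. Linking two unbounded clusters produces a single cluster in which removing a small piece of the chain splits off at most two unbounded components; that is a degree-two vertex of the forest structure, and the leaf-counting bound (leaves versus vertices of degree at least three) gives no contradiction from such configurations --- a tree may have arbitrarily many degree-two vertices and only two leaves. Your description of the event $\A$, ``two disjoint arms going to infinity and a third local piece that, if joined, would merge them,'' is likewise not a Burton--Keane encounter point: the counting argument needs \emph{three disjoint unbounded} branches emanating from each encounter region, which requires three distinct unbounded clusters of the restricted process to meet the ball, i.e.\ essentially $k\geq 3$; this is available only in the case $k=\infty$ (finite $k\geq 3$ having been excluded separately). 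The case $2\leq k<\infty$ must be handled by a different and simpler step: insert finitely many points whose $R$-balls cover a large ball met by every unbounded cluster, so the modified process has at most one unbounded cluster with positive probability, and absolute continuity plus ergodic constancy of $k$ force $k\leq 1$. Your closing parenthesis has the logic backwards --- the counting argument is exactly the tool for $k=\infty$, not something one reduces to the finite case, and as written your main construction never produces a genuine threefold branch.

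For comparison, the paper splits the proof exactly along these lines: Lemma \ref{choices} rules out $2\leq k<\infty$ by the merging argument just described (using Theorem \ref{thm-instol-eq} \eqref{finiteadd} to insert a grid of independent uniform points covering $B(0,N)$), and Lemma \ref{three} shows that $k=\infty$ forces, with positive probability, at least three unbounded components of $W\cap B(0,M)^c$ where $W$ is the cluster of the origin ($M$-branches). The continuum counting argument itself is not redone but quoted as Theorem \ref{comb} from \cite{roy}: a translation-invariant ergodic point process a.s.\ has at most two $M$-branches. The $M$-branch formulation is precisely what sidesteps the continuum ``encounter region'' bookkeeping you correctly flag as the technical heart; if you want a self-contained argument you would need your items (a)--(c), but only after repairing the construction so that the inserted configuration joins \emph{three} distinct unbounded clusters at a single branch point.
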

The proof of Theorem \ref{percuniq} is similar to the
uniqueness proofs in \cite[Chapter 7]{roy} which in turn are based on the argument of
Burton and Keane \cite{burtonkeane}.

Next we turn our attention to stable matchings of point processes (see
\cite{random} for background).   Let $\gothh{R}$ and $\gothh{B}$ be (`red'
and `blue') point processes on $\R^d$ with finite intensities.  A
{\df{one-colour matching scheme}} for $\gothh{R}$ is a point process
$\gothh{M}$ of unordered pairs $\ns{x,y} \subset \R^d$ such that almost
surely $[\gothh{M}]$ is the edge set of a simple graph  $([\gothh{R}],
[\gothh{M}])$ in which every vertex has degree exactly one.  Similarly, a
{\df{two-colour matching scheme}} for $\gothh{R}$ and $\gothh{B}$ is a point
process $\gothh{M}$ of unordered pairs $\ns{x,y} \subset \R^d$ such that
almost surely, $[\gothh{M}]$ is the edge set of a simple bipartite graph
$([\gothh{R}], [\gothh{B}], [\gothh{M}])$ in which every vertex has degree
exactly one.  In either case we write $\gothh{M}(x)=y$ if and only if
$\ns{x,y} \in [\gothh{M}]$.  In the one-colour case, we say that a matching
scheme is {\df{stable}} if almost surely there do not exist distinct points
$x,y \in [\gothh{R}]$ satisfying
\begin{equation}
\label{defstable}
\|x-y\| < \min\ns{ \|x - \gothh{M}(x)\|, \|y - \gothh{M}(y)\|},
\end{equation}
while in the two-colour case we say that a matching scheme is {\df{stable}}
if almost surely there do not exist $x \in [\gothh{R}]$ and $y \in
[\gothh{B}]$ satisfying \eqref{defstable}.  These definitions arise from the
concept of stable marriage as introduced by Gale and Shapley
\cite{galeshapley}.

It is proved in \cite{random} that stable matching schemes exist and are
unique for point processes that satisfy certain mild restrictions, as we
explain next. Let $\mu \in \XX$. We say that $\mu$ has a {\df{descending
chain}} if there exist $x_1, x_2, \ldots \in [\mu]$ with
$$\|x_{i-1} - x_i\| > \|x_i - x_{i+1}\| \ \text{for all} \ i.$$
 We say that $\mu$ is {\df{non-equidistant}} if for all $x,y,u,v
\in \supp{\mu}$ such that $\ns{x,y} \not = \ns{u,v}$ and  $x \not = y$ we
have $\|x-y\| \not = \|u-v\|$.  The following 
 facts are proved in
\cite[Proposition 9]{random}.  Suppose that $\gothh{R}$ is a
translation-invariant point process on $\R^d$ with finite intensity that
almost surely is non-equidistant and has no descending chains.  Then there
exists a one-colour stable matching scheme which is an isometry-equivariant
factor of $\gothh{R}$; this matching scheme may be constructed by a simple
procedure of iteratively matching, and removing, mutually-closest pairs of
$\gothh{R}$-points; furthermore, any two one-colour stable schemes agree
almost surely \cite[Proposition 9]{random}. In this case we refer to the
above-mentioned scheme as {\em the} one-colour stable matching scheme.
Similarly, in the two-colour case, let $\gothh{R}$ and $\gothh{B}$ be point
processes on $\R^d$ of equal finite intensity, jointly invariant and ergodic
under translations, and suppose that $\gothh{R} + \gothh{B}$ is a simple
point process that is almost surely non-equidistant and has no descending
chains.  Then there exists an almost surely unique two-colour stable matching
scheme, which is an isometry-equivariant factor and may be constructed by
iteratively matching mutually-closest $\gothh{R}$ / $\gothh{B}$ pairs.

Homogeneous Poisson process are non-equidistant and have no descending chains
(see \cite{haggstom-meester}).  Descending chains are investigated
in detail in \cite{jones}, where it is shown in particular that they are
absent in many well-studied point processes.

In this paper, our interest in stable matching lies in the typical distance
between matched pairs. Let $\gothh{M}$ be the one-colour stable matching
scheme for $\gothh{R}$. Consider the distribution function
\begin{equation}
\label{defdist}
F(r):= \big(\E\gothh{R}([0,1)^d) \big)^{-1}\E \#\ns{x \in [\gothh{R}] \cap [0,1)^d:
\|x - \gothh{M}(x) \| \leq r}.
\end{equation}
As in  \cite{random}, let $\gothhh{X}$ be a random variable with probability
measure $\P^{*}$ and expectation operator $\E^{*}$ such that $\P^{*}(X \leq
r) = F(r)$ for all $r \geq 0$. One may interpret $\gothhh{X}$ as the distance
from the origin to its partner under the Palm version of $(\gothh{R},
\gothh{M})$ in which we condition on the presence of an $\gothh{R}$-point at
the origin; see \cite{random} for details.  For the two-colour stable
matching scheme of point processes $\gothh{R},\gothh{B}$ we define $X$,
$\P^{*}$, and $\E^{*}$ in the same way.

\begin{theorem}[One-colour stable matching]
\label{onethm} Let $\gothh{R}$ be a translation-in\-variant
ergodic point process on $\R^d$ with finite intensity that
almost surely is non-equidistant and has no descending chains.
If $\gothh{R}$ is in\-sert\-ion-toler\-ant or deletion-tolerant, then
the one-colour stable matching sche\-me satisfies
 $\E^{*} \gothhh{X}^d = \infty$.
\end{theorem}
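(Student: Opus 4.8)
The plan is to argue by contradiction: suppose $\E^{*}\gothhh{X}^d < \infty$, and derive a contradiction with insertion-tolerance (the deletion-tolerant case being handled by a symmetric argument). The heuristic is that $\E^{*}\gothhh{X}^d < \infty$ says that a typical $\gothh{R}$-point is matched at a distance whose $d$th power has finite mean; equivalently, the ``volume'' $\les(B(x,\|x-\gothh{M}(x)\|))$ of the ball reaching from $x$ to its partner has finite Palm expectation. Since this volume is, up to the constant $\omega_d = \les(B(0,1))$, an additive functional of the pair $(\gothh{R},\gothh{M})$, the mass-transport principle (or the Palm inversion formula) lets us transfer this to a statement about $\gothh{R}$ itself: the expected total volume, per unit volume of space, of balls $B(x,\|x-\gothh{M}(x)\|)$ over $\gothh{R}$-points $x$ in a unit cube is finite. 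In particular such a ball is a.s. finite, so in the stable matching every point is matched at finite distance, and moreover the balls cannot cluster too much.

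Now insert a point. Fix a bounded Borel set $S$ of positive finite Lebesgue measure and let $U$ be uniform on $S$, independent of $\gothh{R}$; by insertion-tolerance $\gothh{R}+\delta_U \prec \gothh{R}$. The key structural observation is that adding one point to $\gothh{R}$ forces a global rearrangement of the stable matching. Indeed, the stable matching is constructed by iteratively matching mutually-closest pairs; inserting $U$ may cause $U$ to be matched to some existing point $x_1$, which displaces the former partner of $x_1$, who then re-matches and displaces someone else, and so on. Since $\gothh{R}+\delta_U$ has an odd discrepancy in no bounded region being impossible here — rather the point is that in the stable matching for $\gothh{R}+\delta_U$ there is an infinite ``augmenting path'' of points whose partners change, because a one-colour matching of an even set stays a perfect matching but the newly inserted point has to be absorbed somewhere, and stability (the no-descending-chain hypothesis preventing finite resolution — cf. the proof in \cite{random}) propagates the change to infinity with positive probability. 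Along this path the sum of the $d$th powers of the new match-distances differs from the old one, and I will show that with positive probability this forces $\sum_{x}\|x-\gothh{M}(x)\|^d$ to change its (infinite vs.\ finite) character, or more precisely that an event of positive probability for $\gothh{R}+\delta_U$ — namely that $U$'s presence changes infinitely many matches, or that $U$ itself is matched at arbitrarily large distance with the conditional law of that distance having a heavy tail incompatible with $\E^*\gothhh{X}^d<\infty$ — has probability zero for $\gothh{R}$, contradicting absolute continuity.

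To make the last step clean I would localize: let $A_k$ be the event that the point of $[\gothh{R}]$ nearest to a fixed reference location, say found inside a large cube, is matched at distance $>k$; under $\E^*\gothhh{X}^d<\infty$ one gets $\sum_k k^{d-1}\P(\text{typical point matched at distance}>k)<\infty$, hence by Borel–Cantelli-type reasoning together with translation-invariance and ergodicity the matching ``reaches'' only a $\les$-null... no — rather one extracts a deterministic sequence of scales along which the matching is ``short'' with overwhelming probability. Then, on $\gothh{R}+\delta_U$, a direct combinatorial lemma (an augmenting-path count for mutually-closest matchings, analogous to the descending-chain analysis) shows that the inserted point creates, with positive probability bounded below uniformly, a match of length exceeding every such scale infinitely often; this event is null for $\gothh{R}$ by the short-matching estimate, violating $\gothh{R}+\delta_U\prec\gothh{R}$. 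The deletion-tolerant case is identical after replacing ``insert $U$'' by ``delete the $\gothh{R}$-point $Z$ nearest a reference location'' and using $\gothh{R}-\delta_Z\prec\gothh{R}$.

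The main obstacle I anticipate is making precise and quantitative the claim that inserting (or deleting) a single point forces an infinite cascade of re-matchings with the lengths summing in a controlled way — i.e.\ proving the augmenting-path lemma for iterated mutually-closest matching and showing the resulting length increments are heavy-tailed enough to contradict $\E^{*}\gothhh{X}^d<\infty$. This is where the hypotheses that $\gothh{R}$ is non-equidistant and has no descending chains are essential (they make the stable matching unique and the greedy construction well-defined and ``rigid''), and where the factor of $d$ in the exponent enters, through the volume of the disturbed ball versus the number of $\gothh{R}$-points it can contain. Once that combinatorial/geometric lemma is in hand, the probabilistic conclusion follows from absolute continuity and a routine use of ergodicity and the mass-transport principle.
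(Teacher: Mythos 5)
There is a genuine gap, and it sits exactly where you flag it: the ``augmenting-path'' lemma you defer to the end is not only unproved but is the wrong structural claim. Inserting a single point into a one-colour stable matching does \emph{not} in general trigger an infinite cascade of re-matchings; the paper's Lemma \ref{monohk}\eqref{ins} shows the opposite phenomenon, which is the engine of the actual proof: if no existing point prefers the inserted location to its current partner, the matching is completely unchanged and the inserted point is simply left \emph{unmatched}. The contradiction with insertion-tolerance then comes from the fact that ``having a perfect stable matching'' is an a.s.\ property of $\gothh{R}$ (by \cite[Proposition 9]{random}), hence must also hold a.s.\ for any process absolutely continuous with respect to $\gothh{R}$ --- not from any change of the (infinite vs.\ finite) character of $\sum_x\|x-\gothh{M}(x)\|^d$, which is anyway a.s.\ infinite for a translation-invariant process and so cannot serve as the distinguishing null event. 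Your proposal never actually constructs a measurable event of the configuration that has positive probability under $\gothh{R}+\delta_U$ and probability zero under $\gothh{R}$; the candidates you name ($U$'s presence changes infinitely many matches, $U$ matched at huge distance) are neither shown to be positive-probability for the perturbed process nor null for the original one.

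You are also missing the quantitative bridge that makes the theorem about $\E^*X^d$ rather than about cascades: the identity $\E\,\#H = c\,\E^*[(X+1)^d]$ of \eqref{Hthmfive}, where $H$ is the set of points that would prefer a partner in $B(0,1)$. With that identity the task reduces to showing $\#H=\infty$ with positive probability, and the dichotomy is: either this holds (done), or with positive probability $0<\#H<\infty$, in which case one adds finitely many points near the members of $H$ and their partners (Lemma \ref{addremove}) to reach, with positive probability, a configuration in which a further inserted point would be unmatched --- contradicting the a.s.\ existence of a perfect stable matching, via Corollary \ref{weak}. Finally, the deletion-tolerant case is not ``identical after replacing insert by delete'': deleting one nearby point does not work; one must delete the (assumed finite) set $N(\gothh{R},Z)$ of points preferring a given $\gothh{R}$-point $Z$, together with their partners and $\gothh{M}(Z)$ --- a finite subprocess, which is why Theorem \ref{equiv}\eqref{minusF} is needed --- to leave $Z$ unmatched and reach the same contradiction, and this yields only $\P(\#H=\infty)>0$, which still suffices. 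Your mass-transport paragraph points in the right direction, but as written the argument does not close.
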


\begin{theorem}[Two-colour stable matching]
\label{twothm} \sloppypar Let $\gothh{R}$ and $\gothh{B}$ be independent
translation-invariant ergodic point processes on $\R^d$ with equal finite
intensity such that the point process $\gothh{R} +\gothh{B}$ is
non-equidistant and has no descending chains. If $\gothh{R}$ or $\gothh{B}$
is deletion-tolerant or insertion-tolerant, then the two-colour stable
matching scheme satisfies $\E^{*} \gothhh{X}^d = \infty$.
\end{theorem}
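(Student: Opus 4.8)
Every edge of the stable matching $\gothh M$ of $(\gothh R,\gothh B)$ has one $\gothh R$-endpoint and one $\gothh B$-endpoint, so the intensity of $\gothh M$-edges of length $>t$ equals both $\lambda\,\P^{*}(\gothhh X>t)$ and the analogous quantity with the colours exchanged, where $\lambda$ is the common intensity. In particular the $\gothh R$- and $\gothh B$-Palm matching-distance laws coincide, so it is enough to prove $\E^{*}\gothhh X^{d}=\infty$ under the assumption that it is $\gothh R$ that is insertion-tolerant; the case that $\gothh R$ (or $\gothh B$) is deletion-tolerant is handled in parallel, deleting a point and following the alternating path along which the freed partner re-matches. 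Suppose, for contradiction, that $\E^{*}\gothhh X^{d}<\infty$. For a configuration point $p$ write $\beta(p):=B(p,\|p-\gothh M(p)\|)$ for its \emph{matching ball}. Two facts will be combined. First, with $g(t)$ denoting the intensity of $\gothh M$-edges of length $>t$, the above identities make $\int_{0}^{\infty}t^{d-1}g(t)\,dt$ a positive multiple of $\E^{*}\gothhh X^{d}$, hence finite; since $g$ is nonincreasing this forces $t^{d}g(t)\to0$ --- long matching edges are \emph{sparse at every scale}.

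Second, insertion-tolerance forces a \emph{covering} property of $\gothh M$. Let $U$ be uniform on the unit ball, independent of $(\gothh R,\gothh B)$. By insertion-tolerance of $\gothh R$ and independence of $\gothh B$ one has $(\gothh R+\delta_{U},\gothh B)\prec(\gothh R,\gothh B)$; since the stable matching is a measurable factor of a non-equidistant, descending-chain-free configuration, the enlarged pair a.s.\ inherits non-equidistance, absence of descending chains, and the fact (from \cite{random}) that its stable matching leaves no point unmatched. Hence the inserted point is a.s.\ matched in the restabilised matching $\gothh M^{\dagger}$. But an inserted point at $x$ is matched in $\gothh M^{\dagger}$ if and only if $x\in\beta(w)$ for some $\gothh B$-point $w$: if $x$ lies in no $\gothh B$-matching ball, then declaring $x$ unmatched leaves $\gothh M$ stable for the enlarged configuration, hence (by uniqueness) produces $\gothh M^{\dagger}$. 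Therefore a.s.\ Lebesgue-a.e.\ point of $\R^{d}$ lies in some $\gothh B$-matching ball; in particular a.e.\ insertion point $U$ is covered, and for a.e.\ such $U$ the enlarged configuration is also non-equidistant, descending-chain-free and perfectly matched.

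I would then analyse the restabilisation cascade. When $x$ is covered, $\gothh M\,\triangle\,\gothh M^{\dagger}$ is a single half-infinite path $x=v_{0},w_{1},v_{1},w_{2},\dots$: infinite because both matchings leave no point unmatched, and a single path because the matching obtained by shifting $\gothh M$ along the correct augmenting route is itself stable and hence equals $\gothh M^{\dagger}$. Along it $w_{i+1}$ is the $\gothh B$-point nearest to $v_{i}$ with $v_{i}\in\beta(w_{i+1})$, and the successive edge lengths are \emph{strictly increasing}: $\|v_{i}-w_{i+1}\|<\|w_{i+1}-v_{i+1}\|$ because $v_{i}\in\beta(w_{i+1})$, while $\|v_{i}-w_{i+1}\|>\|v_{i}-w_{i}\|$ is forced by stability of $\gothh M$, since otherwise $(v_{i},w_{i+1})$ would be an unstable pair. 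Consequently the $\gothh M$-edges $e_{i}:=\{v_{i},w_{i}\}$ ($i\ge1$) along such a path form an infinite sequence of \emph{distinct} edges of $\gothh M$ whose lengths $a_{i}$ strictly increase, with the $\gothh B$-endpoint of $e_{i+1}$ the nearest $\gothh B$-point to the $\gothh R$-endpoint of $e_{i}$ covering it, so in particular $\mathrm{dist}(e_{i},e_{i+1})<a_{i+1}$. Since a.e.\ point of $\R^{d}$ may serve as a covered insertion point yielding a perfect $\gothh M^{\dagger}$, it follows that a.s.\ $\gothh M$ contains such an infinite increasing chain.

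It remains to see that the sparsity from the first step makes this impossible. If the lengths $a_{i}$ stay bounded then the chain is an infinite path with bounded edges; local finiteness rules out its vertices accumulating, while if it escapes to infinity it produces infinitely many distinct configuration points of matching distance confined to a fixed bounded interval and linked at a fixed scale --- excluded by a first-moment plus Borel--Cantelli estimate, those points having finite intensity. (I expect in fact that $a_{i}\to\infty$ always, since the cascade forces each bumped point to a strictly more distant covering ball.) If $a_{i}\to\infty$, then for each large dyadic scale $2^{k}$ the chain contains an edge of length exceeding $2^{k}$ whose $\gothh R$-endpoint lies in the matching ball of a strictly longer edge within distance $2^{k+1}$; a mass-transport estimate over the scales $2^{k}$, fed by $t^{d}g(t)\to0$, shows that a.s.\ only finitely many scales admit even one such ``covering pair'' of edges of comparable length, contradicting the infinite chain. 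This yields $\E^{*}\gothhh X^{d}=\infty$. The principal obstacle is precisely this last step --- extracting the quantitative mass-transport bound, and in particular localising in space the scale at which a covering pair occurs even though the chain's edge lengths may increase in jumps --- together with the careful (but routine) verification that the enlarged/reduced configuration a.s.\ inherits non-equidistance, absence of descending chains and perfectness of its stable matching, and that the single-augmenting-path description holds.
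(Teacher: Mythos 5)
Your reduction to the case where $\gothh{R}$ is insertion-tolerant (the colour symmetry is exactly Remark \ref{prime}), and your observation that an inserted red point must be matched in the restabilised matching—hence that a.s.\ Lebesgue-a.e.\ point of $\R^d$ lies in some $\gothh{B}$-matching ball—are sound, and the strict increase of edge lengths along the augmenting path does follow from the monotonicity of Lemma \ref{monotrick} together with non-equidistance. But the argument does not close, and the missing step is the heart of the matter, not a routine verification. Coverage alone is perfectly compatible with $\E^{*}\gothhh{X}^{d}<\infty$ (it only forces a constant multiple of $\E^{*}\gothhh{X}^{d}$ to be at least $1$), so everything rests on converting the a.s.\ existence of an infinite chain of distinct $\gothh{M}$-edges with increasing, linked lengths into a contradiction with $t^{d}g(t)\to 0$. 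Your dyadic mass-transport sketch cannot be run as written: when the chain first exceeds scale $2^{k}$, the covering edge may be enormously longer than $2^{k+1}$, so there need be no ``covering pair of comparable length'' localised within distance $2^{k+1}$; moreover chains started from different insertion points may merge, so one only obtains a single infinite chain, an event about arbitrarily large scales that an intensity bound of the form $g(t)=o(t^{-d})$ does not by itself exclude (compare the existence of infinite clusters in sparse long-range models). You flag this yourself as the principal obstacle; as it stands the proof is incomplete. The auxiliary structural claims (that $\gothh{M}\,\triangle\,\gothh{M}^{\dagger}$ is a \emph{single} path, and that $w_{i+1}$ is the \emph{nearest} covering $\gothh{B}$-point to $v_i$) are also unproved, though they are dispensable if you argue only about the component of the inserted point.

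The second gap is the deletion-tolerant case, which you dismiss as ``parallel'' but which is not: your cascade for a deleted point would need the same missing quantitative step. The paper avoids both difficulties by never analysing tails at all: it proves (Lemma \ref{twoH}) that $\#H=\infty$ with positive probability, where $H$ is the set of $\gothh{R}$-points that would prefer the unit ball to their partner, and then invokes the identity \eqref{Hthmfive}, $\E\#H=c\,\E^{*}\big[(\gothhh{X}+1)^{d}\big]$, from \cite{random}. Your covering observation is essentially the statement $\#H>0$; what is needed is $\E\#H=\infty$. For deletion-tolerance the paper's mechanism is different and short: if the set $N(\gothh{R},Z)$ of red points preferring a fixed blue point $Z$ were finite with positive probability, one could delete it together with $Z$'s new partner (using Lemma \ref{monotrick}, Lemma \ref{monohk}~\eqref{Ndel} and Theorem \ref{equiv}, plus independence of $\gothh{B}$) and leave the blue point $Z$ unmatched; deletion-tolerance transfers this to $(\gothh{R},\gothh{B})$ itself, contradicting a.s.\ perfectness. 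To repair your proposal you must either supply the quantitative estimate you identify as missing, or reorganise the argument around $\E\#H$ as the paper does.
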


Theorems \ref{onethm} and \ref{twothm} strengthen the earlier results in
\cite{random} in the following ways. In \cite{random}, Theorem \ref{onethm}
is proved in the case of homogeneous Poisson processes, but the same proof is
valid under the condition that $\gothh{R}$ is {\em both} insertion-tolerant
{\em and} deletion-tolerant.  Similarly, in \cite{random},
Theorem~\ref{twothm} is proved in the Poisson case, but the proof applies
whenever $\gothh{R}$ or $\gothh{B}$ is insertion-tolerant.  Related results
appear also in \cite[Theorems 32,33]{Stable-PL}.

The following complementary bound is proved in \cite{random} for Poisson
processes, but again the proof given there applies more generally as follows.

\begin{theorem}[{\cite[Theorem 5]{random}}]
\label{thmfiverandom}  \sloppypar Let $\gothh{R}$ be a translation-invariant
ergodic non-equidistant point process on $\R^d$ with no descending chains,
and unit intensity.   The one-colour stable matching scheme
satisfies\linebreak $\P^{*}(X>r) \leq Cr^{-d}$ for all $r > 0$, for some
constant $C =C(d)$ that does not depend on $\gothh{R}$.
\end{theorem}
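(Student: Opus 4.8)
The plan is to bound the tail $\P^{*}(X > r)$ by a mass-transport (or second-moment) argument that controls how many $\gothh{R}$-points in a unit box can be matched to partners at distance more than $r$. The key geometric fact is the following: if $x,y \in [\gothh{R}]$ are both matched to partners at distance strictly greater than $\|x-y\|$, then the pair $\{x,y\}$ would be unstable, contradicting stability. Hence for any fixed point $x$ matched at distance $\gothh{M}(x) =: \rho(x)$, the open ball $B(x, \rho(x))$ can contain \emph{no} other $\gothh{R}$-point $y$ whose own match distance $\rho(y)$ exceeds $\|x-y\|$; in particular the ball $B\bigl(x, \tfrac12\rho(x)\bigr)$ contains no $\gothh{R}$-point $y$ with $\rho(y) \geq \tfrac12 \rho(x)$, since for such a $y$ we would have $\|x-y\| < \tfrac12\rho(x) \le \rho(y)$ and also $\|x-y\| < \tfrac12\rho(x) \le \rho(x)$. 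So the balls $\{B(x,\tfrac12\rho(x)) : x \in [\gothh{R}],\ \rho(x) > r\}$ have the property that each one excludes all other centres from the collection with comparable-or-larger radius.

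First I would set up the mass transport: each point $x$ with $\rho(x) > r$ sends unit mass spread uniformly over $B(x, \tfrac12\rho(x))$; then I compare the mass sent out of a typical unit box with the mass it can receive. Because of the exclusion property above (applied after a dyadic decomposition of the range of $\rho$), a region of volume $v$ can receive only a bounded amount of mass from points whose associated ball has volume at least $v$, and summing the dyadic contributions gives a geometric series. The upshot is that the expected number of $\gothh{R}$-points in $[0,1)^d$ with $\rho > r$ is at most $C' r^{-d}$ times the expected number of all $\gothh{R}$-points receiving mass, i.e.\ at most a constant times $r^{-d}$ (using unit intensity), and by definition of $\P^{*}$ and $F$ this is exactly $\P^{*}(X > r)$ up to the intensity normalisation. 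Ergodicity and translation-invariance are used only to pass between the spatial averages in \eqref{defdist} and genuine expectations via the mass-transport principle; non-equidistance and the absence of descending chains are used solely to guarantee that \emph{the} stable matching exists and is well-defined, so the whole argument takes place on a full-measure event.

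The main obstacle is making the packing/volume estimate uniform in $\gothh{R}$ — that is, extracting a constant $C(d)$ depending only on the dimension. This requires being careful that the dyadic exclusion bound (how much mass of a given dyadic scale can land in a given box) is a purely Euclidean packing statement, independent of the law of $\gothh{R}$, and that the geometric series over scales below $r$ converges with a $d$-dependent ratio. A secondary technical point is handling the boundary effects when a ball $B(x,\tfrac12\rho(x))$ is large compared to the unit box and straddles many boxes; this is dealt with in the standard way by the mass-transport principle, which is insensitive to the size of the transport range. I expect the write-up to essentially mirror the Poisson-case argument of \cite{random}, observing at each step that Poisson-specific properties were never actually used beyond existence/uniqueness of the stable scheme.
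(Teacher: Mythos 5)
Your argument is correct and is essentially the one the paper relies on: the paper proves nothing new here, but observes that the proof of \cite[Theorem 5]{random} --- stability forces any two points both matched at distance greater than $r$ to be at mutual distance at least $r$, and a mass-transport/packing bound then gives $\E\,\#\{x\in[\gothh{R}]\cap[0,1)^d : \|x-\gothh{M}(x)\|>r\}\leq Cr^{-d}$, which with unit intensity is exactly $\P^{*}(X>r)$ --- uses nothing about the Poisson process beyond translation-invariance, unit intensity, and existence/uniqueness of the stable scheme, so it applies verbatim in the stated generality. One simplification to your write-up: since every point you transport from is matched at distance greater than $r$, you may spread its unit mass over the fixed ball $B(x,r/2)$; these balls are already pairwise disjoint by the separation property, so the variable radii, the dyadic decomposition, and the bounded-overlap counting are unnecessary (and the muddled phrase ``times the expected number of points receiving mass'' should simply read that the mass received by $[0,1)^d$ is deterministically at most $\les(B(0,r/2))^{-1}$).
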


Thus, Theorems \ref{onethm} and \ref{thmfiverandom} provide strikingly close
upper and lower bounds on $X$ for the one-colour stable matching schemes of a
wide range of point processes.  For two-colour stable matching, even in the
case of two independent Poisson processes, the correct power law for the tail
of $X$ is unknown in dimensions $d\geq 2$; for $d=1$ the bounds 
 $\E^{*}
X^{\frac{1}{2}} = \infty$ and $\P^{*}(X > r) \leq Cr^{-\frac{1}{2}}$ hold. See
\cite{random} for details.

The rest of the paper is organized as follows.  In Section \ref{examples} we
present examples.  In Section \ref{easy} we prove some of the simpler results
including Theorems \ref{equiv} and \ref{thm-instol-eq}. Despite the
similarities between insertion-tolerance and deletion-tolerance, the proof of
Theorem \ref{thm-instol-eq} relies on the following natural lemma, whose
analogue for deletion-tolerance  is false (see Example \ref{nonmono}).	
\begin{lemma}[Monotonicity of insertion-tolerance]
\label{monofinite} Let $\Pi$ be a point process on $\R^d$ and let $S \in
\borelg$ have finite nonzero Lebesgue measure.   If $\Pi$ is
insertion-tolerant, and $U$ is uniformly distributed in $S$ and independent
of $\Pi$,  then $\Pi + \delta_U$ is insertion-tolerant.
\end{lemma}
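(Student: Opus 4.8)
The plan is to argue directly from the definition of insertion-tolerance, since Lemma~\ref{monofinite} will be used in the proof of Theorem~\ref{thm-instol-eq} and so the equivalences in that theorem are not available. Fix $T\in\borelg$ with $\les(T)\in(0,\infty)$ and a point $V$ that is uniform on $T$ and independent of $\Pi+\delta_U$; we must show $(\Pi+\delta_U)+\delta_V\prec\Pi+\delta_U$. Whether this absolute continuity holds depends only on the joint law of the pair $(\Pi+\delta_U,\,V)$, which by independence is the product of the two marginals; hence we may replace $V$ by an independent copy and assume henceforth that $\Pi$, $U$, $V$ are mutually independent. Along the way one checks, using that $U$ and $V$ have densities while $\supp{\Pi}$ is a.s.\ countable (and $\P(U=V)=0$), that $\Pi+\delta_U$, $\Pi+\delta_V$ and $\Pi+\delta_U+\delta_V$ are a.s.\ simple point measures, so that all the objects in sight are genuine point processes.

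The key idea is to insert the two points $U$ and $V$ in the opposite order. First I would apply insertion-tolerance of $\Pi$ to the set $T$ and the point $V$ --- legitimate since $V$ is uniform on $T$ and independent of $\Pi$ --- to obtain $\Pi+\delta_V\prec\Pi$. Next I would invoke the elementary fact that absolute continuity of laws is stable under adjoining an independent point: if $X\prec Y$ are random elements of $\XX$ and $W$ is a random variable independent of $X$ and independent of $Y$, then $X+\delta_W\prec Y+\delta_W$. This holds because independence makes the joint laws of $(X,W)$ and of $(Y,W)$ into product measures, absolute continuity passes from $\mathrm{law}(X),\mathrm{law}(Y)$ to these products, and then to their pushforwards under the measurable map $(\mu,w)\mapsto\mu+\delta_w$ on point measures.

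Applying this fact with $X=\Pi+\delta_V$, $Y=\Pi$ and $W=U$ --- valid because mutual independence of $\Pi,U,V$ makes $U$ independent of $\Pi+\delta_V$ and of $\Pi$ --- yields $(\Pi+\delta_V)+\delta_U\prec\Pi+\delta_U$. Since addition of measures is commutative, the left-hand side equals $(\Pi+\delta_U)+\delta_V$, and as $T$ was arbitrary this shows that $\Pi+\delta_U$ is insertion-tolerant.

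I do not expect a serious obstacle here: the argument is short, and the only real care needed is in the independence bookkeeping (the reduction to mutually independent $\Pi,U,V$, and the separate independence of $U$ from $\Pi+\delta_V$ and from $\Pi$) and in restricting oneself to tools available before Theorem~\ref{thm-instol-eq}. It is instructive to see why the same reordering cannot work for deletion-tolerance: a $\Pi$-point $Z$ is necessarily a function of $\Pi$ rather than independent of it, so one cannot freely permute two deletions --- consistent with the failure recorded in Example~\ref{nonmono}.
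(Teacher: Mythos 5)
Your proof is correct. The opening reduction is legitimate: since $\prec$ depends only on laws and $V$ is independent of $\Pi+\delta_U$, the joint law of $(\Pi+\delta_U,V)$ is a product, so you may realize $V$ independently of the pair $(\Pi,U)$ without changing either side of the desired relation. The two main steps --- $\Pi+\delta_V\prec\Pi$ from insertion-tolerance of $\Pi$ applied to $T$, and the tensorization fact that if $X\prec Y$ and $W$ is independent of each, then $X+\delta_W\prec Y+\delta_W$ (product laws inherit absolute continuity, then push forward under the measurable map $(\mu,w)\mapsto\mu+\delta_w$) --- are both sound, and your independence bookkeeping ($U$ independent of $\Pi+\delta_V$ and of $\Pi$) is exactly what the tensorization step requires. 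The underlying idea is the same as the paper's, namely inserting the new point $V$ first via the insertion-tolerance of $\Pi$ and only then reattaching $U$; the difference is in packaging. The paper first proves that $\Pi+\delta_x$ is insertion-tolerant for each fixed $x$, using the sets $\A^x=\{\mu\in\XX:\mu+\delta_x\in\A\}$, and then passes from deterministic $x$ to the uniform point $U$ by two applications of the Fubini identity of Remark \ref{fubini}; you keep $U$ random throughout and absorb the Fubini step into the abstract lemma about adjoining an independent point. Unwound, the two computations coincide, so nothing is lost; your formulation has the mild advantage of isolating a reusable fact (the same mechanism gives, for instance, $(\gothh{R}\ronnn_{S^c},\gothh{B})\prec(\gothh{R},\gothh{B})$ from independence in the proof of Lemma \ref{twoH}), while the paper's version avoids stating the product/pushforward lemma explicitly. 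Your closing remark is also apt: the argument genuinely uses that $U$ is independent of $\Pi$, which is why no such reordering is available for deletion-tolerance, consistent with Example \ref{nonmono}.
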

Section \ref{palm} deals with Theorems \ref{thm-instol-stat-eq} and
\ref{suff}.  In Sections \ref{contperc} and \ref{stableM} we prove the
results concerning continuum percolation and stable matchings.  Section
\ref{perproof} provides proofs relating to some of the more elaborate
examples from Section \ref{examples}.

\section{Examples}
\label{examples}

First, we give examples of (translation-invariant) point processes that
possess various combinations of insertion-tolerance and deletion-tolerance.
We also provide examples to show that certain results concerning
insertion-tolerance do not have obvious analogues in the  setting of
deletion-tolerance.  Second, we give examples to show that the conditions in
the results concerning continuum percolation and stable matching are needed.
Finally, we provide results on perturbed lattice processes and Gaussian zeros
processes on the Euclidean and hyperbolic planes.

\subsection{Elementary examples}

\begin{example}[Poisson  process]
\label{poi} {\em{The homogeneous Poisson point process $\Pi$ on $\R^d$ is
both insertion-tolerant and deletion-tolerant. This follows immediately from
Theorem \ref{thm-instol-stat-eq} (\ref{original}) and Theorem \ref{suff} and
the relation $$\Pi ^{*} \eqd \Pi + \delta_0.$$ It is also easy to give an
direct proof of insertion-tolerance and to prove deletion-tolerance via
Theorem \ref{equiv} (\ref{S}). \egg}}
\end{example}

For $S \subseteq \R^d$ and  $x \in \R^d$, write $x + S :=\ns{x + z: z \in
S}$.
\begin{example}[Randomly shifted lattice]
\label{firsteg} {\em{Let $U$ be uniformly distrib\-uted in $[0,1]^d$.
Consider the point process given by $[\Lambda]:= U + \Z^d$. Clearly,
$\Lambda$ is translation-invariant.  Since no ball of radius $1/4$ can
contain more than one $\Lambda$-point,  by Theorem \ref{thm-instol-eq}
\eqref{finiteadd}, $\Lambda$ is not insertion-tolerant.  Also the cube
$[0,1]^d$ must contain $\Lambda$-points, so by Theorem \ref{equiv} \eqref{S},
$\Lambda$ is not deletion-tolerant.}} \egg
\end{example}
\begin{example}[Randomly shifted site percolation]
\label{site} {\em{Let $\ns{Y_z}_{z \in \Z^d}$ be i.i.d.\ \\$\ns{0,1}$-valued
random variables with $\E Y_0 =p \in(0,1)$.  Consider the random set
$W:=\ns{z \in \Z^d: Y_z=1}$.   Let $U$ be uniformly distributed in $[0,1]^d$
and independent of $W$. From Theorem \ref{equiv} \eqref{S}, it is easy to see
that $\Lambda$ given by $[\Lambda]:= U + W$ is deletion-tolerant.   Clearly,
as in Example \ref{firsteg}, $\Lambda$ is not insertion-tolerant.  Moreover,
it is easy to verify that almost surely $[\Lambda] \cap \Z^d = \emptyset$,
but $[\Lambda^*] \subset \Z^d$.  Thus (\ref{palmd}) is not satisfied.

 The {\em un}shifted process $\Phi$ given by $[\Phi]=W$ is also deletion-tolerant, but if $S$ is any set of finite volume containing the axis $\R\times\{0\}^{d-1}$, then $\Phi|_{S^{C}}\not\prec\Phi$.  Thus in Theorem 1 (\ref{S}, \ref{conditionS}) the boundedness condition on $S$ cannot be replaced with finite volume.  However, the randomly shifted process $\Lambda$ does satisfy $\Lambda|_{S^{C}}\prec\Lambda$ for $S$ as above.  See the discussion following the proof of Theorem \ref{equiv} in Section \ref{easy}.}}
\egg
\end{example}

\begin{example}
[Superposition of a Poisson point process with a randomly
shifted lattice] {\em{Let $\Pi$ be a Poisson point process on
$\R^d$ and let $\Lambda$ be a randomly shifted lattice (as in Example
\ref{firsteg}) that is independent of $\Pi$.  Consider the point
process $\Gamma:= \Pi + \Lambda$.  The
insertion-tolerance of $\Pi$ is inherited by $\Gamma$, but $\Gamma$ is
no longer deletion-tolerant.
As in Example \ref{firsteg}, $[0,1]^d$ must contain $\Gamma$-points.}} \egg
\end{example}

\begin{example}[Non-monotonicity of deletion-tolerance]
{\em{We show that in contrast with Lemma \ref{monofinite},
deleting a point from a deletion-tolerant process may destroy
deletion-tolerance. Let $(N_i)_{i\in\Z}$ be i.i.d., taking
values $0,1,2$ each with probability $1/3$, and let $\Pi$ have
exactly $N_i$ points in the interval $[i,i+1)$, for each
$i\in\Z$, with their locations chosen independently and
uniformly at random in the interval. It is easy to verify that
$\Pi$ is deletion-tolerant using Theorem \ref{equiv} \eqref{S}.

Consider the $\Pi$-point $Z$ defined as follows.  If the first integer
interval $[i, i+1)$ to the right of the origin that contains at least one
$\Pi$-point contains exactly two $\Pi$-points, then let $Z$ be the point in
this interval that is closest to the origin; otherwise, let $Z$ be the
closest $\Pi$-point to the left of the origin.  The point process $\Pi' = \Pi
-\delta_Z$ has the property that the first interval to the right of the
origin that contains any $\Pi$-points contains exactly one $\Pi$-point.

Let $Z'$ be the first $\Pi'$-point to the right of the origin.  The process
$\Pi ^{\prime \prime} :=\Pi' - \delta_{Z'}$ has the property that with
non-zero probability the first interval to the right of the origin that
contains any $\Pi ^{\prime \prime}$-points contains exactly two $\Pi ^{\prime
\prime}$-points.  Thus $\Pi'$ is not deletion-tolerant.

If desired, the above example can be made translation-invariant by applying a
random shift $U$ as before.  }}\egg \label{nonmono}
\end{example}

\begin{example}
[One set $S$ satisfying $\Pi \ronnn_{S^c} \prec \Pi$ does not suffice for
deletion-tolerance]
{\em  Let $\Lambda$ be a randomly shifted lattice in $d=1$ (as in Example
\ref{firsteg}) and let $\Pi$ be a Poisson point process on $\R$ of intensity
1 that is independent of $\Lambda$.  Let $Y:=\cup_{x \in [\Pi]} B(x, 5)$, and
consider $\Gamma:=\Lambda\ronnn_{Y^c}$.  Let $Z$ be the first $\Gamma$-point
to the right of the origin such that $Z + i \in [\Gamma]$ for all integers
$i$ with $|i| \leq 20$. Clearly, $\Gamma -\delta_Z \not \prec \Gamma$ and
thus $\Gamma$ is not deletion-tolerant.  On the other hand, since  $\Pi$ is
insertion-tolerant, $\Gamma\ronnn_{B(0,5)^c} \prec  \Gamma$. (Note the
contrast with Proposition \ref{anyS} for insertion-tolerance.) }\egg
\label{counterS}
\end{example}

\subsection{Continuum percolation and stable matching}

\begin{example}
[A point process that is neither insertion-tolerant nor deletion-tolerant and
has infinitely many unbounded clusters] {\em{Let $\ns{Y_z}_{z \in \Z}$ be
i.i.d.\ $\ns{0,1}$-valued random variables with $\E Y_0 = \frac{1}{2}$. Let
$$W:= \ns{(x_1, x_2) \in \Z^2: Y_{x_2} =1}$$ and let $U$ be uniformly
distributed in $[0,1]^2$ and independent of $W$.  Consider the point process
$\Lambda$ with support $U + W$.  Thus $\Lambda$ is a randomly shifted lattice
with columns randomly deleted.   As in Example~\ref{firsteg}, $\Lambda$ is
neither insertion-tolerant nor deletion-tolerant.  In the continuum
percolation model with parameter $R=2$, the occupied region $\Oo(\Lambda)$
has infinitely many unbounded clusters. \egg}}
\end{example}

\begin{example}
[A point process that is not insertion-tolerant, but is dele\-tion-toler\-ant
and has infinitely many  unbounded clusters] {\em{Let $\Lambda$ be a randomly
shifted super-critical site percolation in $d=2$, as in Example~\ref{site}.
Let $\ns{\Lambda_i}_{i \in \Z}$ be independent copies of $\Lambda$. Let
$\ns{Y_z}_{z \in \Z}$ be i.i.d.\ $\ns{0,1}$-valued random variables
independent of $\Lambda$ with $\E Y_0 = \frac{1}{2}$.   Consider the point
process $\Gamma$ with support $$[\Gamma]=\bigcup_{i \in \Z: \,Y_i=1}
[\Lambda_i] \times \ns{i}.$$  Thus $\Gamma$ is a point process in $\R^3$,
obtained by stacking independent copies of $\Lambda$.  Clearly, the point
process $\Gamma$ is deletion-tolerant, but not insertion-tolerant.  With
$R=2$, the occupied region $\Oo(\Gamma)$ has infinitely many unbounded
clusters.  \egg}}
\end{example}

\begin{example}[One-colour matching for two perturbed lattices]
{\em{Let $W=\ns{W_i}_{i \in \Z^d}$ and $Y=\ns{Y_i}_{i \in \Z^d}$ be all
i.i.d.\ random variables uniformly distributed in $B(0,1/4)$. Let $U$ be
uniformly distributed in $[0,1]^d$ and independent of $W,Y$. Let $\gothh{R}$
be the point process with support
$$[\gothh{R}] = U + \ns{i + W_i,i + Y_i: i \in \Z^d}.$$
It is easy to verify that $\gothh{R}$ is  neither insertion-tolerant nor
deletion-tolerant, and that $\gothh{R}$ has no descending chains and is
non-equidistant.  The one-colour stable matching scheme satisfies
$\|x-\gothh{M}(x)\| < \tfrac12$ for all $x \in [\gothh{R}]$ (in contrast with
the conclusion in Theorem \ref{onethm}). }} \egg \label{needone}
\end{example}

\begin{example}[Two-colour matching for randomly shifted lattices]
{\em Let $\gothh{R}$ and $\gothh{B}$ be two independent copies of the
randomly shifted lattice $\Z$ in $d=1$ as defined in Example \ref{firsteg}.
Although $\gothh{R} + \gothh{B}$ is not non-equidistant, it is easy to verify
that there is an a.s.\ unique two-colour stable matching scheme for
$\gothh{R}$ and $\gothh{B}$, and it satisfies $\|x -\gothh{M}(x)\| <
\tfrac{1}{2}$ for all $x \in [\gothh{R}]$.} \egg
\end{example}

\subsection{Perturbed lattices and Gaussian zeros}
The proofs of the results stated below are given in Section  \ref{perproof}.

\begin{example}[Perturbed lattices]
\label{pert} {\em{Let $\ns{Y_z}_{z \in \Z^d}$ be i.i.d.\ $\R^d$-valued random
variables.   Consider the point process $\Lambda$ given by $$[\Lambda]:=
\ns{z + Y_z: z \in \Z^d}.$$
Note that $\Lambda$ is invariant and ergodic under shifts of $\Z^d$. It is
easy to see that (for all dimensions $d$) if $Y_0$ has bounded support, then
$\Lambda$ is neither insertion-tolerant nor deletion-tolerant. Indeed, in
this case we have  $\Lambda(B(0,1)) \leq M$ for some constant $M<\infty$, so,
by Theorem \ref{thm-instol-eq} \eqref{finiteadd}, $\Lambda$ is not
insertion-tolerant (otherwise we could add $M+1$ random points in $B(0,1)$).
Also, $\Lambda(B(0,N)) \geq 1$, for some $N<\infty$, so Theorem \ref{equiv}
\eqref{S} shows that $\Lambda$ is not deletion-tolerant. \egg}}
\end{example}

For dimensions $1$ and $2$ we can say more.
\begin{proposition}[Perturbed lattices in dimensions $1,2$]
\label{pertone} Let $[\Lambda]:= \ns{z + Y_z: z \in \Z^d}$ for i.i.d.\
$\ns{Y_z}_{z \in \Z^d}$.  For $d=1,2$, if $\E \|Y_0\|^d < \infty$, then
$\Lambda$ is neither insertion-tolerant nor deletion-tolerant.
\end{proposition}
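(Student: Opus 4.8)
The plan is to rule out insertion-tolerance and deletion-tolerance separately, in each case by exhibiting a bounded region whose $\Lambda$-count is, with positive probability, pinned down in a way that forbids the required absolute continuity, and then invoking Theorem \ref{thm-instol-eq} \eqref{finiteadd} and Theorem \ref{equiv} \eqref{S} respectively. The point of the finite-$d$-th-moment hypothesis is that, although $Y_0$ need no longer have bounded support, the number of lattice points $z$ whose perturbation $z+Y_z$ lands in a fixed ball is still tightly controlled on average; the key quantitative input is that for $d=1,2$ we have $\sum_{z\in\Z^d}\P(\|Y_z\|\ge \|z\|/2)<\infty$ whenever $\E\|Y_0\|^d<\infty$, since this sum is comparable to $\int \P(\|Y_0\|\ge r)\,r^{d-1}\,dr$. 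Hence, almost surely, only finitely many points $z+Y_z$ with $z$ far from the origin can reach a given bounded set; more usefully, the number of $z+Y_z$ lying in a large ball $B(0,N)$ has all moments controlled, and in particular is a.s.\ finite and has finite expectation growing like the volume.

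For the failure of insertion-tolerance, I would argue as follows. Fix a large integer $N$. The random variable $\Lambda(B(0,N))$ counts how many of the i.i.d.\ perturbed points land in $B(0,N)$; by the moment bound its expectation is finite. On the event that $\Lambda(B(0,N))=k$ for the specific value $k$ equal to, say, the number of lattice points $z\in\Z^d$ with $z\in B(0,N)$ and $Y_z$ taking it nowhere — more robustly, I would instead fix a small ball $B(x_0,\rho)$ far from the origin and observe that $\P(\Lambda(B(x_0,\rho))=0)>0$ while, for $\Lambda$ to be insertion-tolerant, adding one uniform point to $B(x_0,\rho)$ must yield a law absolutely continuous with respect to $\Lambda$; but $\Lambda(B(x_0,\rho))=0$ has positive probability whereas $(\Lambda+\delta_U)(B(x_0,\rho))\ge 1$ always, and this alone does not yet contradict absolute continuity. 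The correct route is the one indicated in Example \ref{pert}: show that there is a deterministic $M<\infty$ with $\P(\Lambda(B(0,1))\le M)=1$. This fails in general, so instead we use that $\Lambda(B(0,1))$ has finite expectation, hence there is $M$ with $\P(\Lambda(B(0,1))\ge M+1)<1$, i.e.\ $\P(\Lambda(B(0,1))\le M)>0$; but now inserting $M+1$ i.i.d.\ uniform points into $B(0,1)$ produces a process with $(\Lambda+\sum_{i=1}^{M+1}\delta_{U_i})(B(0,1))\ge M+1$ a.s., which has zero probability under $\Lambda$ on this event — this contradicts Theorem \ref{thm-instol-eq} \eqref{finiteadd}. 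So insertion-tolerance fails whenever $\E\,\Lambda(B(0,1))<\infty$, which holds for all $d$ under $\E\|Y_0\|^d<\infty$.

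For the failure of deletion-tolerance, I would exploit Theorem \ref{equiv} \eqref{S} by finding a bounded $S$ with $\P(\Lambda(S)=0)=0$ but such that restricting away from $S$ can produce a configuration of positive $\Lambda$-probability having $\Lambda(S)\ge 1$ — wait, that is automatic; the real obstruction must instead be that $\Lambda\ronnn_{S^c}$ can, with positive probability, have a local pattern that $\Lambda$ a.s.\ never exhibits. Concretely, in $d=1,2$, pick a large cube $Q$; with positive probability every lattice point $z$ with $z+Y_z\in Q$ actually has $z\in Q'$ for a slightly larger cube $Q'$, and one shows that with positive probability $\Lambda(Q)\ge 1$ (indeed $\E\Lambda(Q)\asymp |Q|$ is large), while on a further positive-probability event the pattern forces something. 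The clean version, matching Example \ref{pert}, is: show there is $N<\infty$ with $\P(\Lambda(B(0,N))\ge 1)=1$. This is where the moment hypothesis is genuinely used in $d=1,2$: one needs that a.s.\ \emph{some} perturbed point lands in a fixed ball. I would prove this via a second-moment or Borel–Cantelli argument — under $\E\|Y_0\|^d<\infty$ the displacements are small enough in dimensions $1,2$ that the perturbed lattice cannot a.s.\ leave a fixed ball empty (heuristically, the "flux" of points through a large ball boundary is finite, so points cannot all escape). Given such an $N$, the set $S=B(0,N)$ has $\Lambda(S)\ge 1$ a.s., yet $\Lambda\ronnn_{S^c}(S)=0$ always, so $\Lambda\ronnn_{S^c}\not\prec\Lambda$, and Theorem \ref{equiv} \eqref{S} gives that $\Lambda$ is not deletion-tolerant.

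\textbf{Main obstacle.} The crux — and the only place the restriction to $d=1,2$ and the hypothesis $\E\|Y_0\|^d<\infty$ both enter — is establishing that in dimensions $1$ and $2$ the perturbed lattice almost surely has a point in some fixed bounded region (equivalently, that $\P(\Lambda(B(0,N))=0)\to 0$, in fact hits $0$, as $N\to\infty$). In $d\ge 3$ this can genuinely fail, so the argument must use a recurrence-type or flux-counting feature special to low dimensions: the expected number of lattice points $z$ with $|z|\sim r$ whose perturbation crosses a distance of order $r$ is $\asymp r^{d-1}\P(\|Y_0\|\gtrsim r)$, which is summable in $r$ under the $d$-th moment assumption, forcing a.s.\ only finitely many "long-range" arrivals and departures from any fixed ball; combining this with the fact that in $d=1,2$ the unperturbed lattice has points arbitrarily close to any fixed ball, one concludes the ball is a.s.\ eventually (in $N$) nonempty. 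Making this dichotomy precise — in particular handling the $d=2$ borderline case, where the relevant sums are only barely summable — is the part of the proof that requires real care rather than routine estimation.
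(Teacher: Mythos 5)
Both halves of your argument rest on local counting events that are simply not available under the hypothesis $\E\|Y_0\|^d<\infty$, and each concluding contradiction relies on a false claim. For insertion-tolerance: from $\E\Lambda(B(0,1))<\infty$ you do get an $M$ with $\P(\Lambda(B(0,1))\le M)>0$, but to contradict Theorem \ref{thm-instol-eq} \eqref{finiteadd} you would need the event $\{\mu : \mu(B(0,1))\ge M+1\}$ to be null for $\Lambda$, and it is not: when $Y_0$ has unbounded support (e.g.\ Gaussian perturbations, which satisfy the moment hypothesis in $d=1,2$), $\P(\Lambda(B(0,1))\ge k)>0$ for every $k$. Indeed, your stated conclusion ``insertion-tolerance fails whenever $\E\Lambda(B(0,1))<\infty$'' would apply to the Poisson process, which is insertion-tolerant; the bounded-count argument is exactly the bounded-support case already handled in Example \ref{pert} and does not extend. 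The deletion half has the same defect: you need some $N$ with $\P(\Lambda(B(0,N))\ge 1)=1$, but since $\sum_{z}\P(z+Y_z\in B(0,N))=\E\Lambda(B(0,N))<\infty$ and the events $\{z+Y_z\in B(0,N)\}$ are independent with each probability strictly less than $1$, we have $\P(\Lambda(B(0,N))=0)\ge\prod_{z}\bigl(1-\P(z+Y_z\in B(0,N))\bigr)>0$ for every $N$. So no fixed bounded region is a.s.\ occupied, and Theorem \ref{equiv} \eqref{S} cannot be contradicted this way; the ``flux'' heuristic does not repair this, because finiteness of long-range arrivals does not prevent all the (finitely many relevant) points near a fixed ball from being displaced out of it with positive probability.

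The missing idea is that one must use a global, asymptotic statistic rather than a local event, which is how the paper proceeds (Proposition \ref{decayo} together with Lemmas \ref{masterlemma} and \ref{mlemma}). One works with the centered counts $\Lambda(h_n)-\E\Lambda(h_n)$ over growing balls: for $d=2$ the second-moment hypothesis yields a uniform variance bound and covariance decay, so a weak law of large numbers along a subsequence forces Ces\`aro averages of these centered counts to converge to $0$ in probability, while after inserting (resp.\ deleting) a single point the same averages converge to $1$ (resp.\ $-1$), which is incompatible with absolute continuity; for $d=1$ one uses tightness and asymptotic independence of the integer-valued $N_n=\Lambda(-n,n]-2n$ and compares limiting empirical distribution functions, which again shift under insertion or deletion of one point. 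The dimension restriction and the $d$-th moment enter only through these fluctuation estimates, not through any almost-sure local occupancy or boundedness statement; your proposal does not contain this mechanism, so as written it does not prove the proposition.
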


\begin{question}
Does there exists a distribution for the perturbation $Y_0$ such that the
resulting perturbed lattice  is insertion-tolerant?  In particular, in the
case $d=1$, does this hold whenever $Y_0$ has infinite mean?  What are the
possible combinations of insertion-tolerance and deletion-tolerance for
perturbed lattices?  Allan Sly has informed us that he has made progress on
these questions.
\end{question}

Perturbed lattice models were considered by Sodin and Tsirelson
\cite{MR2121537} as simplified models to illustrate certain properties of
Gaussian zero processes (which we will discuss next).  Our proof of
Proposition~\ref{pertone} is in part motivated by their remarks, and similar
proofs have also been suggested by Omer Angel and Yuval Peres (personal
communications).

The Gaussian zero processes on the plane and hyperbolic planes are defined as
follows (see \cite{MR2552864,MR2121537} for background).  Let $\ns{a_n}_{n=0}
^ {\infty}$ be i.i.d.\ standard complex Gaussian random variables with
probability density $\pi^{-1}\exp(-|z|^2)$ with respect to Lebesgue measure
on the complex plane.  Firstly, consider the entire function
\begin{equation}
\label{planef}
f(z) := \sum_{n=0} ^ {\infty} \frac{a_n}{\sqrt{n!}} z^n.
\end{equation}
The set of zeros of $f$ forms a translation-invariant point process $\GZF$ in
the complex plane.  Secondly, consider the analytic function on the unit disc
$\mathbb{D}:=\ns{z \in \mathbb{C} : |z| < 1}$ given by
\begin{equation}
\label{hypf}
g(z):= \sum_{n=0} ^ {\infty} a_n z^n.
\end{equation}
The set of zeros of $g$ forms a point process $\GZH$.  We endow $\mathbb{D}$
with the hyperbolic metric $|dz| / (1 - |z|^2)$ and the group of symmetries
$G$ given by the maps $z \mapsto (az + b) /(\bar{b}z + \bar{a})$, where $a,
b\in \mathbb{C}$ and  $|a|^2 - |b|^2 =1$.  Then $\GZH$ is invariant in law
under action of $G$.

The following two facts were suggested to us by Yuval Peres, and are
consequences of results of \cite{MR2121537} and \cite{MR2231337}
respectively.
\begin{proposition}
\label{GAFplane} The Gaussian zero process $\GZF$ on the plane is neither
insertion-tolerant nor deletion-tolerant.
\end{proposition}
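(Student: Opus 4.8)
The plan is to exploit the key rigidity property of the planar Gaussian zero process $\GZF$ established in \cite{MR2121537}: if one conditions on the configuration of zeros outside a bounded region $B$, then the number of zeros inside $B$ is \emph{almost surely determined} (it is not random). In fact Sodin and Tsirelson show that the linear statistic $\sum_{z \in [\GZF]} \varphi(z)$ has variance decaying like the boundary term for smooth compactly supported $\varphi$, and the sharper deduction (made explicit in later work, e.g.\ \cite{MR2231337} and the references therein) is that $\GZF$ restricted to the complement of a disc $B(0,r)$ determines $\GZF(B(0,r))$ a.s. I would first recall this fact precisely as the single external input.

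Granting this, the argument for \emph{failure of insertion-tolerance} runs as follows. Suppose for contradiction that $\GZF$ is insertion-tolerant. Take $S = B(0,1)$, let $U$ be uniform on $S$ independent of $\GZF$, and set $\Pi' := \GZF + \delta_U$. Insertion-tolerance gives $\Pi' \prec \GZF$. Now consider the event $\A$ that the number of points in $B(0,1)$ is \emph{not} the a.s.-determined value $h\big(\GZF|_{B(0,1)^c}\big)$ prescribed by the conditional rigidity; by rigidity, $\P(\GZF \in \A) = 0$. But $\Pi'|_{B(0,1)^c} = \GZF|_{B(0,1)^c}$ while $\Pi'(B(0,1)) = \GZF(B(0,1)) + 1 \neq h\big(\Pi'|_{B(0,1)^c}\big)$, so $\P(\Pi' \in \A) = 1$. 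This contradicts $\Pi' \prec \GZF$. The argument for \emph{failure of deletion-tolerance} is symmetric: for any $\GZF$-point $Z$, one checks $Z \in B(0,\rho)$ for some large enough random $\rho$ with positive probability (or one localizes: on the event $\{Z \in B(0,1)\}$, which has positive probability), and then $\GZF - \delta_Z$ has one fewer point in the relevant disc than rigidity allows, while agreeing with $\GZF$ off that disc; hence $\GZF - \delta_Z \not\prec \GZF$. A mild technical point is that $Z$ need not lie in a fixed bounded set, so one should run the comparison on the event that $Z$ lies in some large ball, using countable additivity over an exhausting sequence of balls to conclude that absolute continuity fails.

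The main obstacle — and the only place real care is needed — is citing and applying the rigidity statement in exactly the right form. Sodin--Tsirelson prove the variance asymptotics for linear statistics; the passage from small variance of fluctuations to the hard statement "the count in $B$ is a deterministic function of the exterior" is the content that I would need to invoke carefully (it is precisely the "number rigidity" phenomenon later formalized by Ghosh--Peres). Everything else is a routine application of the definitions of insertion- and deletion-tolerance together with the observation that an event of full probability for the perturbed process and zero probability for the original process immediately violates $\prec$. I expect this to be short: a paragraph recalling rigidity, then the two contradiction arguments.
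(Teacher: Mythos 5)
Your high-level strategy is sound, but it rests on an input that is strictly stronger than anything available in the sources you cite, and you do not supply the missing derivation. The statement you call rigidity --- that $\GZF(B(0,r))$ is an a.s.\ deterministic function of $\GZF\restriction_{B(0,r)^c}$ --- is \emph{not} proved in \cite{MR2121537}, and \cite{MR2231337} concerns the hyperbolic zero process (which, as Proposition~\ref{gausszeros} shows, is insertion- and deletion-tolerant, so no such rigidity can be extracted from it). Passing from Sodin--Tsirelson's variance asymptotics for smooth linear statistics to the conditional-determinism statement is a genuine additional argument (one has to exhibit exterior-measurable approximations of the interior count using a sequence of smooth test functions equal to $1$ on the disc with vanishing variance, and control the interior contribution of the cutoff); you flag this as ``the only place real care is needed'' but then leave it as the unproved crux. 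Granting rigidity, your contradiction arguments are fine, including the localization over an exhausting sequence of balls for the deletion case, so the gap is concentrated entirely in that one citation.

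The paper's proof shows that rigidity is not needed at all. It invokes only the variance decay $\var\GZF(h_n)\to 0$ from \cite[Equation (0.6)]{MR2121537} for a smooth bump $h$, which gives $\GZF(h_n)-\E\GZF(h_n)\to 0$ in probability, and then applies the soft criterion of Proposition~\ref{decayo}~(i): along a deterministic subsequence the centered statistics converge a.s.\ to $0$, whereas for $\GZF+\delta_U$ (with $U$ uniform in $B(0,1)$) they converge a.s.\ to $1$, and for $\GZF-\delta_Z$ to $-1$; since these are tail-type events of probability one for the perturbed processes and probability zero for $\GZF$, both absolute continuities fail. If you want to salvage your route, you must either prove the conditional-determinism lemma from the variance bounds (essentially the later Ghosh--Peres argument) or replace it by the weaker convergence-in-probability statement, at which point you have rediscovered the paper's argument.
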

\begin{proposition}
\label{gausszeros} The Gaussian zero process $\GZH$ on the hyperbolic plane
is both insertion-tolerant and deletion-tolerant.
\end{proposition}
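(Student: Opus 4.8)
The plan is to apply Theorem~\ref{thm-instol-stat-eq} and Theorem~\ref{suff} in the form permitted by Remark~\ref{gen}, taking the group of symmetries to be the group $G$ of disc automorphisms, so that everything reduces to understanding the Palm version $\GZH^{*}$ of $\GZH$. The process $\GZH$ is $G$-invariant by hypothesis, has finite intensity (its first intensity is $\pi^{-1}$ per unit hyperbolic area, hence locally finite), and is ergodic under $G$ (a standard fact, analogous to the planar case), so both theorems are applicable. It therefore suffices to establish
\begin{equation*}
\GZH+\delta_{0} \prec \GZH^{*} \qquad\text{and}\qquad \GZH^{*}-\delta_{0} \prec \GZH ;
\end{equation*}
the first relation gives insertion-tolerance by Theorem~\ref{thm-instol-stat-eq}, and the second gives deletion-tolerance by Theorem~\ref{suff}.

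The key structural input is the determinantal description of $\GZH$ due to Peres and Vir\'ag \cite{MR2231337}: $\GZH$ is the determinantal point process on $\mathbb{D}$ with the Bergman kernel $\mathbb{K}(z,w)=\pi^{-1}(1-z\bar w)^{-2}$ relative to Lebesgue measure. From this one identifies $\GZH^{*}-\delta_{0}$ explicitly, either through the Schur-complement formula $\mathbb{K}^{x}(y,z)=\mathbb{K}(y,z)-\mathbb{K}(y,x)\mathbb{K}(x,z)/\mathbb{K}(x,x)$ for reduced Palm kernels of determinantal processes, or through the Kac--Rice/Campbell formula for zeros of Gaussian analytic functions (conditioning the function to vanish at the point and size-biasing by the squared modulus of its derivative there). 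Writing $g(z)=\sum_{n\ge0}a_{n}z^{n}$ for the defining power series, the outcome is that $\GZH^{*}-\delta_{0}$ is the zero set of
\begin{equation*}
\tilde g(z)\;:=\;c_{0}+\sum_{n\ge1}a_{n+1}z^{n},
\end{equation*}
where $c_{0}$ is independent of $(a_{2},a_{3},\dots)$ and has the law of a standard complex Gaussian size-biased by $|\cdot|^{2}$, i.e.\ with density $z\mapsto|z|^{2}$ relative to the standard complex Gaussian law $\mu$. (In determinantal language this is the process with kernel $\mathbb{K}^{0}=\mathbb{K}-\pi^{-1}$, the projection of $\mathbb{K}$ onto $\{f\in A^{2}(\mathbb{D}):f(0)=0\}$.) Since both $\GZH=\mathcal{Z}(g)$ and $\GZH^{*}-\delta_{0}=\mathcal{Z}(\tilde g)$ almost surely omit the origin ($g(0)=a_{0}$ and $\tilde g(0)=c_{0}$ have densities on $\mathbb{C}$), the Borel map adding an atom at $0$ identifies the two displayed relations with $\mathcal{Z}(g)\prec\mathcal{Z}(\tilde g)$ and $\mathcal{Z}(\tilde g)\prec\mathcal{Z}(g)$; thus it remains only to show $\mathcal{Z}(g)$ and $\mathcal{Z}(\tilde g)$ are mutually absolutely continuous.

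This last step is a soft argument at the level of Taylor coefficients. Let $\Psi$ denote the Borel map sending a sequence $(b_{n})_{n\ge0}$ to the zero set of $\sum_{n}b_{n}z^{n}$ on $\mathbb{D}$. Then $\mathcal{Z}(g)=\Psi\big((a_{n})_{n\ge0}\big)$ is the push-forward of the product measure $\mu\otimes\mu\otimes\cdots$, while $\mathcal{Z}(\tilde g)=\Psi\big((c_{0},a_{2},a_{3},\dots)\big)$ is the push-forward, under the \emph{same} map $\Psi$, of $\hat\mu\otimes\mu\otimes\cdots$, where $\hat\mu(dz)=|z|^{2}\,\mu(dz)$. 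Now $\mu$ and $\hat\mu$ are mutually absolutely continuous: the derivatives $|z|^{2}$ and $|z|^{-2}$ are finite almost everywhere and have integral $1$, the singularity of $|z|^{-2}$ at the origin being harmless because $\mu(\{0\})=0$. Hence the two product measures, which differ only in their zeroth factor, are mutually absolutely continuous (with Radon--Nikodym derivatives $|b_{0}|^{2}$ and $|b_{0}|^{-2}$), and therefore so are their $\Psi$-images, completing the proof modulo the cited structural facts. I expect the only genuinely non-routine point to be the explicit identification of $\GZH^{*}$: one must invoke either the reduced-Palm-kernel formula for determinantal processes together with \cite{MR2231337}, or the Palm/Kac--Rice description of the zero set of a Gaussian analytic function conditioned to have a zero at a given point. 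Everything else—the $G$-invariance, ergodicity, and finite intensity of $\GZH$ needed to apply Theorems~\ref{thm-instol-stat-eq} and~\ref{suff}, and the coefficient-level absolute continuity—is routine.
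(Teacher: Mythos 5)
Your proposal is correct and follows essentially the same route as the paper: reduce via Theorems~\ref{thm-instol-stat-eq} and~\ref{suff} (with Remark~\ref{gen}) to the two relations $\GZH+\delta_0 \prec \GZH^*$ and $\GZH^*-\delta_0 \prec \GZH$, identify $\GZH^*-\delta_0$ as the zero set of a power series whose constant coefficient is a size-biased complex Gaussian and whose remaining coefficients are i.i.d.\ standard Gaussians, and conclude by mutual absolute continuity at the level of Taylor coefficients. The only difference is how that Palm identification is sourced: you defer it to the reduced-Palm-kernel formula for determinantal processes or a Kac--Rice argument, whereas the paper derives it (Proposition~\ref{mini}) directly from Peres and Vir\'ag's Lemma~18 on conditional convergence of the coefficients given a zero near the origin, combined with Rouch\'e's theorem and Kallenberg's characterization of the Palm version.
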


\section{Basic results}
\label{easy}

In this section we prove elementary results concerning
insertion-tolerance and deletion-tolerance.  The following
simple application of Fubini's theorem will be useful. Recall
that $\leb$ denotes Lebesgue measure.
\begin{remark}
\label{fubini} Let $\Pi$ be a point process on $\R^d$.   If $S \in \borelg$
is a set of positive finite measure and $U$ is uniformly distributed $S$ and
independent of $\Pi$, then

\hfill $\displaystyle\P( \Pi + \delta_U \in \cdot) = \frac{1}{\les(S)}\int_S
\P(\Pi + \delta_x\in \cdot)\,dx.$ \hfill $\Diamond$
\end{remark}
Let $\goth{M}$ be the product $\sigma$-field on $\XX$. For $\A \in \goth{M}$
and $x\in\R^d$, we set $$\A^x := \{ \mu \in \XX : \mu + \delta_x \in \A \}.$$
Thus $\A^{x}$ is the set of point measures for which adding a point at $x$
results in an element of $\A$.

\begin{proof}[Proof of Lemma \ref{monofinite}]
Let $\Pi$ be insertion-tolerant. We first show that for almost all $x \in
\R^d$ the point process $\Pi + \delta_x$ is insertion-tolerant.  The proof
follows from the definition of $\A^x$.  Let $V$ be uniformly distributed in
$S' \in \borelg$ and independent of $\Pi$. Suppose $\A \in \goth{M}$ is such
that $$\P(\Pi + \delta_x + \delta_V \in \A) = \P(\Pi + \delta_V \in \A^x) >
0.$$ Since $\Pi$ is insertion-tolerant, $0<\P(\Pi \in \A^x)=\P(\Pi+ \delta_x
\in \A)$.

Next, let $U$ be uniformly distributed in $S \in \borelg$ and independent of
$(\Pi, V)$.   Let  $\P(\Pi + \delta_U \in \A)=0$, for some $\A \in \goth{M}$.
By Remark \ref{fubini},  $\P(\Pi + \delta_x \in \A) = 0$ for almost all $x
\in S$, and since $\Pi + \delta_x$ is insertion-tolerant for almost all $x
\in \R^d$, we deduce that $\P(\Pi + \delta_x + \delta_V \in \A) =0$ for
almost all $x \in S$.  Applying Remark \ref{fubini} to the process $\Pi +
\delta_V$, we obtain $\P(\Pi + \delta_U +\delta_V \in \A) =0$.
\end{proof}

With Lemma \ref{monofinite} we prove that insertion-tolerance implies the
following stronger variant of Theorem \ref{thm-instol-eq} in which we allow
the number of points added to be random. If $(X_1, \ldots, X_n)$ is a random
vector in $(\R^d)^n$ with law that is absolutely continuous with respect to
Lebesgue measure, then we say that the random (unordered) set $\ns{X_1,
\ldots, X_n}$ is {\df{nice}}.  A finite point process $\gothh{F}$ is
{\df{nice}} if for all $n \in \N$, conditional on $\gothh{F}(\R^d) = n$, the
support $[\gothh{F}]$ is equal in distribution to some nice random set; we
also say that the law of $\gothh{F}$ is nice if $\gothh{F}$ is nice.

\begin{corollary}
\label{weak} Let $\Pi$ be an in\-sert\-ion-toler\-ant point process on $\R^d$
and let $\gothh{F}$ be a  finite point process on $\R^d$.  If $\gothh{F}$
admits a conditional law given $\Pi$ that is nice, then $\Pi + \gothh{F}
\prec \Pi$.
\end{corollary}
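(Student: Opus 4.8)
The plan is to reduce the random-cardinality statement to the fixed-cardinality case and then apply Theorem \ref{thm-instol-eq}\eqref{weakcond} together with the monotonicity Lemma \ref{monofinite}. First I would condition on the event $\{\gothh{F}(\R^d) = n\}$. Writing $p_n := \P(\gothh{F}(\R^d) = n)$, for a measurable set $\A \in \goth{M}$ we have $\P(\Pi + \gothh{F} \in \A) = \sum_{n \geq 0} p_n \, \P(\Pi + \gothh{F} \in \A \mid \gothh{F}(\R^d) = n)$. Since the statement $\Pi + \gothh{F} \prec \Pi$ means $\P(\Pi \in \A) = 0 \implies \P(\Pi + \gothh{F} \in \A) = 0$, and since a countable sum of nonnegative terms vanishes if and only if each term vanishes, it suffices to show that for each $n$ with $p_n > 0$ the conditional law of $\Pi + \gothh{F}$ given $\gothh{F}(\R^d) = n$ is absolutely continuous with respect to the law of $\Pi$. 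Here one must be mildly careful: conditioning on $\{\gothh{F}(\R^d) = n\}$ may change the joint law of $(\Pi, \gothh{F})$, but by hypothesis, conditionally on this event the support $[\gothh{F}]$ is equal in distribution to some nice random set, i.e.\ to $\{X_1, \ldots, X_n\}$ for a random vector $(X_1,\ldots,X_n) \in (\R^d)^n$ with a conditional law given $\Pi$ that is absolutely continuous with respect to Lebesgue measure a.s.

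With the reduction in place, for each fixed $n$ I would apply Theorem \ref{thm-instol-eq}\eqref{weakcond} directly: the pair $(X_1, \ldots, X_n)$ admits a conditional law given $\Pi$ that is absolutely continuous with respect to Lebesgue measure on $(\R^d)^n$ a.s.\ (the $n$-fold product of absolutely continuous laws, or more generally any law dominated by $n$-dimensional Lebesgue measure, is what the condition requires), so $\Pi + \sum_{i=1}^n \delta_{X_i} \prec \Pi$. Since $[\gothh{F}] \eqd \{X_1,\ldots,X_n\}$ conditionally, and since $\Pi + \gothh{F}$ is a measurable function of $\Pi$ together with $[\gothh{F}]$ (as $\gothh{F}$ is simple), this gives the conditional absolute continuity for each $n$, and summing over $n$ completes the proof. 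Note that for $n = 0$ the claim is trivial since $\gothh{F}$ is then the zero measure.

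The main obstacle, and the place requiring the most care, is the joint measurability and conditioning step: one must verify that "$\gothh{F}$ admits a conditional law given $\Pi$ that is nice" can be unpacked, event-by-event in $n$, into the hypothesis of Theorem \ref{thm-instol-eq}\eqref{weakcond} about a random vector $(X_1,\ldots,X_n)$ conditionally absolutely continuous given $\Pi$ — in particular that the absolute continuity is with respect to the \emph{conditional} law given $\Pi$ and survives the further conditioning on $\{\gothh{F}(\R^d)=n\}$. This is a standard disintegration argument (using that $(\R^d)^n$ and $\XX$ are standard Borel spaces so regular conditional distributions exist), but it is the only genuinely non-routine point; once it is settled, everything else is the bookkeeping of the countable decomposition over $n$ and a single invocation of the already-proved Theorem \ref{thm-instol-eq}. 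The monotonicity Lemma \ref{monofinite} is not strictly needed for this particular corollary if one cites \eqref{weakcond} directly, but it underlies the proof of that equivalence and could be used to give an alternative inductive argument on $n$ adding one point at a time.
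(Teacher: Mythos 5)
The reduction you give---splitting over the values of $\gothh{F}(\R^d)$ and handling each $n$ separately---is essentially sound: a countable decomposition, together with the observation that a mixture of absolutely continuous laws is absolutely continuous, lets one define for each $n$ a vector $(X_1,\ldots,X_n)$ on the original space (equal to the ordered support of $\gothh{F}$ on $\{\gothh{F}(\R^d)=n\}$ and, say, given by independent uniform points elsewhere) whose conditional law given $\Pi$ is absolutely continuous with respect to Lebesgue measure. But your proof has a genuine gap: its single substantive step is the citation of Theorem \ref{thm-instol-eq}\eqref{weakcond}, and in this paper the implication (i) $\Rightarrow$ \eqref{weakcond} is itself proved by invoking Corollary \ref{weak} (``From Corollary \ref{weak}, it is immediate that (i) $\Rightarrow$ (iii)''). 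So, as written, the argument is circular: the corollary is the stronger statement from which \eqref{weakcond} is deduced, not the other way around, and the entire analytic content of both results---upgrading from insertion of independent uniform points to insertion of points whose conditional law given $\Pi$ is merely absolutely continuous---appears nowhere in your proposal.

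That missing content is exactly what the paper's proof supplies. It iterates Lemma \ref{monofinite} $n$ times to get $\Pi+\gothh{F}'_{n,k}\prec\Pi$, where $\gothh{F}'_{n,k}$ consists of $n$ i.i.d.\ uniform points in $B(0,k)$, and then compares $\Pi+\gothh{F}$ on the event $E_{n,k}$ (where $\gothh{F}$ has $n$ points, all in $B(0,k)$) with $\Pi+\gothh{F}'_{n,k}$ by writing both probabilities as integrals over $B(0,k)^n$, against the conditional density $g_n(\cdot,\pi)$ in one case and the uniform density in the other (displays \eqref{nk} and \eqref{nkprime}); a set that is null for $\Pi+\gothh{F}'_{n,k}$ forces the inner integrand to vanish for a.e.\ $\pi$ and Lebesgue-a.e.\ $\mathbf{x}$, hence is null for $\Pi+\gothh{F}$ on $E_{n,k}$. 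Your closing remark that Lemma \ref{monofinite} ``could be used to give an alternative inductive argument adding one point at a time'' gestures at this, but that lemma only covers uniform points independent of the process; the Fubini/density-comparison step is still needed to handle a law that is conditionally absolutely continuous given $\Pi$, and that is the step you must write out for the proof to be non-circular.
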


\begin{proof}[Proof of Theorem \ref{thm-instol-eq}]
Clearly, (iii) $\Rightarrow$ (ii) $\Rightarrow$  (i).  From Corollary
\ref{weak}, it is immediate that (i) $\Rightarrow$   (iii).
\end{proof}

\begin{proof}[Proof of Corollary \ref{weak}]
Let $U$ be uniformly distributed in $[0,1]$ and independent of $\Pi$.  Let
$f:\XX \times [0,1] \to \XX$ be a measurable function such that for all $\pi
\in \XX$ we have that $f(\pi, U)$ is a nice finite point process.  It
suffices to show that  $\Pi + f(\Pi, U) \prec \Pi$.

Consider the events
$$E_{n,k}:= \Big\{ f(\Pi, U)(\R^d) = n \Big\} \
\cap \  \Big\{[f(\Pi, U)] \subset B(0,k)\Big\}.$$
Let  $\ns{U_{r,k}}_{r=1} ^n$ i.i.d.\ random variables uniformly distributed
in $B(0,k)$ and independent of $(\Pi, U)$.  Let $\gothh{F}'_{n,k}:=
\sum_{r=1} ^n \delta_{U_{r,k}}$.  By applying Lemma \ref{monofinite},  $n$
times, we see that $\Pi + \gothh{F}_{n,k}' \prec \Pi$; thus it suffices to
show  that $\Pi +f(\Pi, U) \prec \Pi +\gothh{F}_{n,k}'$ for some $n,k \geq
0$.

For each $\mathbf{x} \in (\R^d)^n$,  let $(\mathbf{x}_1, \ldots,
\mathbf{x}_n) = \mathbf{x}$.  If $S \subset \R^d$ has $n$ elements, then we
write $\langle S \rangle:= (s_1 \ldots, s_n) \in (\R^d)^n,$ where $s_i$ are
the elements of $S$ in lexicographic order.  For each $n \geq 0$, let
$g_n:(\R^d)^n \times \XX \to \R$ be a measurable function such that
$g_n(\cdot, \pi)$ is the probability density function (with respect to
$n$-dimensional Lebesgue measure) of $\langle [f(\pi, U)] \rangle $,
conditional on $f(\pi, U)(\R^d) =n$.  Let $Q$ be the law of $\Pi$ and let $\A
\in \goth{M}$.  Thus
\begin{eqnarray}
\label{nk}
\lefteqn{\P\big(\Pi + f(\Pi, U) \in \A, \ E_{n,k}\big)=} \nonumber\\
&& \int \bigg(
\int_{B(0,k)^n} \onee\Big[\pi+  \sum_{i=1} ^n {\delta_{\mathbf{x}_i}}  \
\in \A \Big]g(\mathbf{x}, \pi)d\mathbf{x}\bigg)dQ(\pi).
\end{eqnarray}
On the other hand,
\begin{eqnarray}
\label{nkprime}
\lefteqn{\P\big(\Pi + \gothh{F}_{n,k}'  \in \A\big) =} \nonumber \\ && \int
\frac{1}{\les(B(0,k))^n}\bigg( \int_{B(0,k)^n} \onee\Big[\pi + \sum_{i=1} ^n
{\delta_{\mathbf{x}_i}}  \ \in \A \Big] d\mathbf{x}\bigg)dQ(\pi).
\end{eqnarray}
If $\P(\Pi + f(\Pi, U) \in \A) > 0$, then there exist $n,k \geq 0$ such that
$\P(\Pi + f(\Pi, U) \in \A, \ E_{n,k}) >0$; moreover from \eqref{nk} and
\eqref{nkprime}, we deduce that $\P(\Pi + \gothh{F}_{n,k}'  \in \A) >0$.
\end{proof}

The proof of Theorem \ref{equiv} relies on the following lemma.
\begin{lemma}
\label{unipick} Let $\Pi$ be a point process on $\R^d$. If   $\gothh{F}$ is a
finite subprocess of $\Pi$, then  there exists a bounded $S \in \borelg$ with $\les(S)
\in (0, \infty)$ such that
\begin{equation}
\label{revS}
\P (\Pi\ronnn_{S} = \gothh{F} ) >0.
\end{equation}
\end{lemma}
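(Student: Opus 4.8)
The plan is to find a bounded Borel set $S$ that, with positive probability, contains exactly the points of the finite subprocess $\gothh{F}$ and no other points of $\Pi$. First I would dispose of the trivial case $\gothh{F} = 0$ (the empty measure): then any bounded $S$ with $\P(\Pi(S) = 0) > 0$ works, and such an $S$ exists — e.g. take $S = B(0,r)$ for $r$ small enough, since $\Pi$ is locally finite so $\Pi(B(0,r)) \to 0$ a.s. as $r \to 0$, forcing $\P(\Pi(B(0,r)) = 0) > 0$ for some $r$. From now on assume $\P(\gothh{F}(\R^d) \ge 1) > 0$.

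The key idea is a discretization argument. Cover $\R^d$ by the countable collection $\Q$ of dyadic half-open cubes of all scales. Since $\gothh{F}$ is a.s. finite and $[\gothh{F}] \subseteq [\Pi]$ with $\Pi$ simple and locally finite, on the event $\{\gothh{F}(\R^d) = n\}$ (for any fixed $n \ge 1$ with positive probability) the points of $[\gothh{F}]$ are a.s. distinct and isolated from the rest of $[\Pi]$. Hence a.s. on this event there is a finite union $T$ of disjoint dyadic cubes — one small cube around each $\gothh{F}$-point — such that $\Pi\ronnn_T = \gothh{F}$ and $T$ is bounded. I would make this precise as follows: for a finite union of dyadic cubes $T$, let $A_T := \{\Pi\ronnn_T = \gothh{F}\}$; each $A_T \in \goth{M}$. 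By the preceding observation, $\P\big(\bigcup_T A_T\big) \ge \P(\gothh{F}(\R^d) = n) > 0$, where the union is over the countable family of finite unions of dyadic cubes. By countable subadditivity there is at least one such $T$ with $\P(A_T) > 0$. Since $T$ is a finite union of cubes, $\les(T) \in (0,\infty)$, so $S := T$ satisfies \eqref{revS}.

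To argue the "a.s. on $\{\gothh{F}(\R^d) = n\}$ there exists a good dyadic $T$" step carefully: work on the full-probability event where $\Pi$ is simple and locally finite and $[\gothh{F}] \subseteq [\Pi]$ and $\gothh{F}(\R^d) = n$. Write $[\gothh{F}] = \{y_1, \dots, y_n\}$ with the $y_i$ distinct. Because $\Pi$ is locally finite, $[\Pi]$ has no accumulation points, so there is $\delta > 0$ with the balls $B(y_i, \delta)$ pairwise disjoint and each $B(y_i,\delta) \cap [\Pi] = \{y_i\}$. Pick a dyadic scale $2^{-m} < \delta/(2\sqrt d)$; for each $i$ let $C_i$ be the unique dyadic cube of side $2^{-m}$ containing $y_i$. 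Then $C_i \subseteq B(y_i,\delta)$, so $\Pi\ronnn_{C_i} = \delta_{y_i}$, the $C_i$ are pairwise disjoint, and $T := \bigcup_i C_i$ satisfies $\Pi\ronnn_T = \sum_i \delta_{y_i} = \gothh{F}$. This exhibits the required $T$ pointwise on the event, completing the argument.

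The main obstacle is purely a matter of care rather than depth: one must ensure the family of candidate sets over which the countable union is taken is genuinely countable (dyadic cubes, or rational boxes, suffice) and that each event $A_T = \{\Pi\ronnn_T = \gothh{F}\}$ is measurable — which it is, since $\mu \mapsto \mu\ronnn_T$ is a measurable map on $\XX$ and equality of the two point measures is a measurable condition on the product space carrying $(\Pi,\gothh{F})$. Given those routine checks, the positive-probability conclusion follows immediately from countable subadditivity.
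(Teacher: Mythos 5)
Your argument is correct and is essentially the paper's own proof: the paper likewise reduces to a countable family of candidate sets (finite unions of rational balls rather than finite unions of dyadic cubes), uses local finiteness to produce a random member $\mathbf{S}$ of the family with $\Pi\ronnn_{\mathbf{S}}=\gothh{F}$ a.s., and then extracts a deterministic $S$ with $\P(\Pi\ronnn_S=\gothh{F})>0$ by countability, exactly as in your subadditivity step. One small slip in your side case $\gothh{F}=0$ a.s.: $\Pi(B(0,r))$ need not tend to $0$ as $r\to 0$ (it tends to $\Pi(\{0\})$, which can be $1$ a.s., e.g.\ for a process with a point at the origin), but this is harmless, since a.s.\ there is a ball centered at a point outside the closed, locally finite set $[\Pi]$ --- or equivalently an empty dyadic cube, so your main countable-union argument with $n=0$ allowed --- that contains no $\Pi$-points, and the same extraction of a deterministic set applies.
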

\begin{proof}
 A ball $B(x,r)$ is {\df{rational}} if $x \in \Q^d$ and $r \in \Q^{+}$.
 Let $C$ be the collection of all unions of finitely many rational balls.
Clearly $C$ is countable.  We will show that there exists $S \in C$
satisfying \eqref{revS}.  Since $\Pi$ is locally finite, it follows that
there exists a $C$-valued random variable $\mathbf{S}$ such that
$\Pi\ronnn_{\mathbf{S}} = \gothh{F}$ a.s.   Since $$ \sum_{S \in C
}\P(\Pi\ronnn_{S} =\gothh{F}, \ \ \mathbf{S} =S) =\P(\Pi\ronnn_{\mathbf{S}}
=\gothh{F}) =1,$$ at least one of the terms of the sum is nonzero.
\end{proof}

With Lemma \ref{unipick} we  first prove the following special case of
Theorem~\ref{equiv}.
%
\begin{lemma}
\label{minieq} Let $\Pi$ be a point process on $\R^d$.  The following conditions are
equivalent.
\begin{enumerate}[(i)]
\item
The point process $\Pi$ is deletion-tolerant.
\item If $\gothh{F}$ is a finite subprocess of $\Pi$ such that
    $\gothh{F}(\R^d)$ is a bounded random variable, then $\Pi -\gothh{F}
    \prec \Pi$.
\end{enumerate}
\end{lemma}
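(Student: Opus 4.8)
For (ii)$\Rightarrow$(i) I would simply observe that, for any $\Pi$-point $Z$, the measure $\delta_Z$ is a finite subprocess of $\Pi$ with $\delta_Z(\R^d)=1$ bounded, so (ii) yields $\Pi-\delta_Z\prec\Pi$; this is deletion-tolerance. The real content is (i)$\Rightarrow$(ii). The plan is \emph{not} to delete the points of $\gothh F$ from $\Pi$ one at a time while invoking deletion-tolerance at each stage: that would require $\Pi-\delta_Z$ to be deletion-tolerant, which can fail (Example~\ref{nonmono}). Instead I would keep the process $\Pi$ fixed throughout and enlarge the target event.

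So fix a finite subprocess $\gothh F$ of $\Pi$ together with a constant $m$ with $\gothh F(\R^d)\le m$ a.s., and fix $\A\in\goth M$ with $\P(\Pi\in\A)=0$; the goal is $\P(\Pi-\gothh F\in\A)=0$. (We may assume $\P([\Pi]=\emptyset)=0$: on $\{[\Pi]=\emptyset\}$ we have $\gothh F=0$ and $\Pi-\gothh F=\Pi$, so that event contributes only $\P(\Pi\in\A)=0$.) Set $\A_0:=\A$ and, for $j\ge 0$,
\[
  \A_{j+1}\ :=\ \A_j\ \cup\ \bigl\{\mu\in\XX:\ \mu-\delta_x\in\A_j\ \text{for some}\ x\in[\mu]\bigr\},
\]
so $\A_0\subseteq\A_1\subseteq\cdots$ and $\A_j$ is the set of configurations $\mu$ from which some element of $\A$ is obtained by at most $j$ single-point deletions. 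Each $\A_j$ lies in $\goth M$: the quantifier over the countable set $[\mu]$ becomes a countable union over rational balls, on each of which one has a measurable enumeration of the finitely many points of $\mu$, and $(\mu,x)\mapsto\mu-\delta_x$ is measurable where it is defined; the claim follows by induction on $j$.

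The key step is to prove, again by induction on $j$, that $\P(\Pi\in\A_j)=0$ for every $j$ (the case $j=0$ being the hypothesis). Assuming $\P(\Pi\in\A_j)=0$, suppose for contradiction that $E:=\{\exists\,x\in[\Pi]:\ \Pi-\delta_x\in\A_j\}$ has positive probability. Let $W$ be the $\R^d$-valued random variable that equals the lexicographically least $x\in[\Pi]$ with $\Pi-\delta_x\in\A_j$ on $E$, and the lexicographically least point of $[\Pi]$ off $E$; then $W$ is a $\Pi$-point, so deletion-tolerance gives $\Pi-\delta_W\prec\Pi$ and hence $\P(\Pi-\delta_W\in\A_j)=0$. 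But $\Pi-\delta_W\in\A_j$ on $E$, so $\P(\Pi-\delta_W\in\A_j)\ge\P(E)>0$, a contradiction. Thus $\P(\Pi\in\A_{j+1})\le\P(\Pi\in\A_j)+\P(E)=0$, completing the induction.

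Finally, on the event $\{\Pi-\gothh F\in\A\}$, enumerate the $\ell:=\gothh F(\R^d)\le m$ points of $[\gothh F]$ as $Z_1,\dots,Z_\ell$ and delete them from $\Pi$ one by one: since each $Z_i\in[\Pi]\setminus\{Z_1,\dots,Z_{i-1}\}$, this exhibits a chain $\Pi\to\Pi-\delta_{Z_1}\to\cdots\to\Pi-\gothh F\in\A$ of length $\ell$, so $\Pi\in\A_\ell\subseteq\A_m$. Hence $\P(\Pi-\gothh F\in\A)\le\P(\Pi\in\A_m)=0$, and since $\A$ was an arbitrary $\Pi$-null event, $\Pi-\gothh F\prec\Pi$. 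The main obstacle is exactly the one flagged at the start --- the non-monotonicity of deletion-tolerance --- which is why one enlarges $\A$ to the sets $\A_j$ instead of running a naive induction on the number of deleted points; the remaining points (measurability of the $\A_j$, and the measurable selection of $W$) are routine.
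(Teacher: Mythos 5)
Your proof is correct, but it takes a genuinely different route from the paper's. The paper argues by induction on the number of points of the subprocess and in the contrapositive direction: from $\P(\Pi-\gothh{F}-\delta_Z\in\A)>0$ it uses Lemma~\ref{unipick} to find a deterministic set $S$ of finite Lebesgue measure capturing the deleted point, passes to the event $\A^S:=\ns{\mu+\delta_x:\mu\in\A,\ x\in S}$ to invoke the inductive hypothesis, and then deletes a measurably selected $\Pi$-point $R\in S$ witnessing membership in $\A^S$. You instead work directly with null events: you form the ``deletion closure'' $\A_j$ of the null set $\A$ and show by induction that each $\A_j$ stays null, applying deletion-tolerance of $\Pi$ itself to a measurably selected witness point $W$ at every stage; the inclusion $\ns{\Pi-\gothh{F}\in\A}\subseteq\ns{\Pi\in\A_m}$ then finishes. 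Both arguments rest on the same two ingredients --- a measurable selection of a witnessing $\Pi$-point and the crucial decision to apply deletion-tolerance only to $\Pi$, never to the already-thinned process (thereby dodging Example~\ref{nonmono}) --- but your version dispenses with Lemma~\ref{unipick} and the localization to a finite-measure set $S$, which in the paper serves mainly to make the ``add the point back'' step measurable; your $\A_j$ construction achieves this more directly. Two small repairs: the ``lexicographically least'' point of an infinite locally finite set need not exist, so select instead (as the paper does) the witness closest to the origin with lexicographic tie-breaking; and the definition of $W$ off the event $E$ requires $[\Pi]\neq\emptyset$ a.s., a caveat your opening reduction does not quite justify --- though the paper's own proof makes the identical implicit assumption, so this is not a defect of your approach relative to theirs.
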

\begin{proof}
Clearly, (ii) implies (i).

We show by induction on the number of points of the finite subprocess that
(i) implies (ii).  Assume that $\Pi$ is  deletion-tolerant. Suppose that (ii)
holds for every finite subprocess $\gothh{F}$ of  $\Pi$ such that
$\gothh{F}(\R^d) \leq n$.  Let $\gothh{F}'$ be a finite subprocess of $\Pi$
with $\gothh{F}'(\R^d) \leq n+1$.  Observe that on the event that
$\gothh{F}'(\R^d) \not = 0$, we have $\gothh{F}'= \gothh{F} +\delta_{Z},$
where $\gothh{F}$ is a finite subprocess of
 $\Pi$ with $\gothh{F}(\R^d) \leq n$ and $Z$ is some
 $\Pi$-point.  Let $\P( \Pi -\gothh{F}' \in \A
) > 0$, for some $\A \in \goth{M}$.  If $\P(\Pi -\gothh{F}' \in \A, \ \
\gothh{F}'(\R^d) = 0 ) > 0$, then clearly $\P(\Pi \in \A) > 0$.  Thus we
assume without loss of generality that $\gothh{F}'= \gothh{F}  +\delta_{Z}$
so that $\P(\Pi - \gothh{F} - \delta_{Z} \in \A) > 0$.
By applying Lemma~{\ref{unipick}} to the point process $\Pi - \gothh{F}$,
conditioned on $\Pi - \gothh{F} - \delta_Z \in \A$,   there exists $S \in
\borelg$ with finite Lebesgue measure, so that
\begin{equation}
\label{addAS}
\P\Bigl( (\Pi-\gothh{F})\ronnn_{S} = \delta_Z \
\Big| \ \Pi - \gothh{F} - \delta_Z \in \A \Bigr) > 0.
\end{equation}
Let $\A^S:= \ns{ \mu + \delta_x : \mu \in \A,\;  x \in S}$, so that by the
definition of $\A^S$ and \eqref{addAS}, we have $\P(\Pi -\gothh{F} \in \A^S)
> 0$.  By the inductive hypothesis,  $\P(\Pi \in \A^S) > 0$.

Observe that if $\Pi \in \A^S$, there is an $x \in \supp{\Pi} \cap S$ such
that $\Pi - \delta_x \in \A$.  Define a $\Pi$-point $R$ as follows.  If $\Pi
\in \A ^S$, let $R$ be the point of $\supp{\Pi} \cap S$ closest to the origin
(where ties are broken using lexicographic order) such that $\Pi -\delta_{R}
\in \A$, otherwise  let $R$ be the $\Pi$-point closest to the origin.
Hence $$\P(\Pi-\delta_R \in \A) \geq \P(\Pi \in \A ^S) > 0.$$  Since $\Pi$ is
deletion-tolerant, $\P(\Pi \in \A) >0$.
\end{proof}

\begin{proof}[Proof of Theorem {\ref{equiv}}]

We show that \eqref{conditionS}  $\Rightarrow$ \eqref{S} $\Rightarrow$  (i) $\Rightarrow$  \eqref{minusF}
$\Rightarrow$ \eqref{S} $\Rightarrow$ \eqref{conditionS}.

Assume that \eqref{conditionS}  holds and that for some bounded $S \in \borelg$  and some $A \in \goth{M}$ we have 
$\P( \Pi \ronnn_{S^c} \in A)>0$.    
From \eqref{conditionS}, we have
 $\P(\Pi(S) = 0 \ | \ \Pi\ronnn_{S^c}) > 0$ a.s.    
Thus $\P(\Pi \in A ) \geq  \P(\Pi \ronnn_{S^c} \in A, \Pi(S) = 0) >0$, and  \eqref{S}  holds.

Assume that \eqref{S}  holds and that for some $\Pi$-point
$Z$ and some $\A \in \goth{M}$ we have $\P( \Pi - \delta_Z \in \A) > 0$.  By Lemma
{\ref{unipick}}, $ \P(\Pi\ronnn_{S^c} \in \A) >
0$ for some bounded $S \in \borelg$, with finite Lebesgue measure.
From (\ref{S}), $\P(\Pi \in \A) > 0$.  Thus (i)
holds and $\Pi$ is deletion-tolerant.

Assume that (i) holds.  Let $\gothh{F}$ be a finite subprocess of $\Pi$ and
suppose for some $\A \in \goth{M}$ we have $\P(\Pi -\gothh{F} \in \A ) >0$.
Define $\gothh{F}_n$ as follows.  Take $\gothh{F}_n =\gothh{F}$ if
$\gothh{F}(\R^d)=n$, otherwise set $\gothh{F}_n = 0$.     Note that for some
$n$, we have $\P(\Pi - \gothh{F}_n \in \A) > 0$.  Since $\Pi$ is
deletion-tolerant, by Lemma~{\ref{minieq}}, $\P(\Pi \in \A) > 0$.  Thus
(\ref{minusF}) holds.

Clearly (\ref{minusF}) implies (\ref{S}), since for bounded $S \in \borelg$, the point process with support $[\Pi] \cap S$ is a
finite subprocess of $\Pi$.

Assume that \eqref{conditionS} fails, so that there exists  a bounded $S \in \borelg$  such that $\P( \Pi(S) = 0 \ | \ \Pi\ronnn_{S^c}) = 0$ on some set of positive measure.   Thus there exists $A \in \goth{M}$ such that
$\P(   \Pi(S) = 0, \ \Pi\ronnn_{S^c} \in A) = 0$ and $\P(\Pi\ronnn_{S^c} \in A) >0$.   With $A':=  A \cap \ns{\mu  \in \mathbb{M}: \mu(S) = 0}$, we have $\P( \Pi \ronnn_{S^c} \in A') >0$, but $\P(\Pi \in A') = 0$, so that $\Pi$ does not satisfy \eqref{S}.       
\end{proof}

We remark that the condition in Theorem 1 (\ref{S}, \ref{conditionS}) that $S$ is bounded may be replaced with the condition that $\Pi(S) < \infty$ a.s., and the resulting statements are also equivalent.   It is easy to verify that the modified statements $\eqref{S}',   \eqref{conditionS}'$ satisfy:  $\eqref{minusF} \Rightarrow \eqref{S}' \Rightarrow \eqref{conditionS}' \Rightarrow \eqref{conditionS}$.  In particular if $\Pi$ is translation-invariant and of finite intensity, then any $S$ of finite volume satisfies $\Pi(S) < \infty$ a.s.

For a translation $\theta$ of $\R^d$ and a point measure $\mu \in \XX$, we
define $\theta\mu\in\XX$ by $(\theta \mu)(S):= \mu(\theta^{-1} S)$ for all $S
\in \borelg$; for $\A \in \goth{M}$, we write $\theta \A := \ns{\theta \mu :
\mu \in \A}$. For $x \in \R^d$ let $\theta_x$ be the translation defined by
$\theta_x(y):=y+x$ for all $y \in \R^d$.

\begin{proof}[Proof of Proposition \ref{anyS}]
Let $U,V$ be uniformly distributed on  $S,T \in \borelg$ respectively and let
$U,V,\Pi$ be independent. Assume that $\Pi +\delta_U \prec \Pi$ and let $\A
\in \goth{M}$ be such that $\P(\Pi+ \delta_V \in \A) > 0$. We will show that
$\P(\Pi \in \A) > 0$.

Since $\Pi$ is translation-invariant, for all $\A' \in \goth{M}$ we have
$\P(\Pi +\delta_{\theta U} \in \A')= \P(\Pi + \delta_U \in \theta^{-1}\A')$
and thus $\Pi +\delta_{\theta U} \prec \Pi$ for all translations $\theta$ of
$\R^d$.   By Remark \ref{fubini}, $T':=\ns{w \in T: \P(\Pi + \delta_w \in \A)
> 0}$ has positive Lebesgue measure.  By the  Lebesgue density theorem
\cite[Corollary~2.14]{MR1333890}, there exist $x \in T'$, $y \in S$, and $\e
>0$ such that
\begin{align*}
\les(T \cap B(x,\e)) &> \tfrac12 \les B(x, \e); \\
\les(S \cap B(y, \e)) &> \tfrac12 \les B(y, \e).
\end{align*}

Thus with $z=x-y$, the set $T' \cap \theta_{z} S$ has positive Lebesgue
measure.  Thus by Remark \ref{fubini}, $\P(\Pi +\delta_{\theta_z U} \in \A) >
0$.   Since $\Pi +\delta_{\theta_z U} \prec \Pi$, we have $\P(\Pi \in \A)>0$.
\end{proof}

\section{Palm equivalences}
\label{palm}

In this section, we  discuss  insertion-tolerance and
deletion-tolerance in the context of Palm processes.  We begin by presenting
some standard definitions and facts.  Let $\Pi$ be a translation-invariant
point process with finite intensity $\lambda$. The Palm version of $\Pi$ is
the point process $\Pi^*$ such that for all $\A \in \goth{M}$ and all $S \in
\borelg$ with finite Lebesgue measure, we have
\begin{equation}
\label{palmeq}
\E\#\bigl\{x\in[\Pi]\cap S
\text{ with } \Pi\in \theta_x \A\bigr\}=\lambda\les S \cdot \P(\Pi^*\in \A),
\end{equation}
where $\#B$ denotes the cardinality of a set $B$. Sometimes (\ref{palmeq}) is
called the {\em{Palm property}}.

By a monotone class argument, a consequence of (\ref{palmeq}) is that for all
measurable $f:\XX \times \R^d \to [0,\infty)$ we have
\begin{equation}
\label{palmeqg}
\E \int_{\R^d} f(\theta_{-x}\Pi, x)  \,d\Pi(x) = \lambda
\int_{\R^d}\E f(  \Pi^{*}, x)  \,dx;
\end{equation}
see \cite[Chapter 11]{MR1876169}.

\begin{proof}[Proof of Theorem \ref{suff}]
Let $\Pi$ have intensity $\lambda > 0$.  Let $S\in \borelg$ have finite
Lebesgue measure.  By Theorem \ref{equiv} it suffices to show that
$\Pi\ronnn_{S^c} \prec \Pi$.

Let $\P(\Pi\ronnn_{S^c} \in \A) > 0$, for some $\A \in \goth{M}$.  Thus we
may assume that
\begin{equation}
\label{gzero}
\P(\exists x \in [\Pi] \cap S : \Pi -\delta_x \in \A) >0,
\end{equation}
otherwise $\P(\Pi \in \A) > 0$.  By applying (\ref{palmeqg}) to the function
$$(\mu, x) \mapsto \onea{ \mu -\delta_0 \in \theta_{-x}\A } \onea{x \in S},$$
we obtain
\begin{equation}
\label{palmcons}
\E\#\ns{x \in [\Pi ]\cap S : \theta_{-x}(\Pi - \delta_x) \in \theta_{-x} \A} \\
=\lambda \int_S \P(\Pi^{*} - \delta_0 \in \theta_{-x} \A) dx.
\end{equation}
From (\ref{gzero}) and (\ref{palmcons}), we deduce that $\P(\Pi^{*} -
\delta_0 \in \theta_{-x} \A)>0$, for some $x \in S$.  By assumption, $\P(\Pi
\in \theta_{-x} \A) > 0$.  Since $\Pi$ is translation-invariant, $\P(\Pi \in
\A) >0$.
\end{proof}

\begin{proof}[Proof of Theorem \ref{thm-instol-stat-eq}, (i) $\Rightarrow$ (\ref{original})]
Suppose that $\Pi + \delta_0$ is not absolutely continuous with
respect to $\Pi^*$; then there exists $\A \in \goth{M}$ such
that
$$\P(\Pi^*\in \A)=0 \quad\text{but}\quad \P(\Pi+\delta_0\in \A)>0.$$
Without loss of generality, take $\A$ to be a set that does not
care whether there is a point at $0$; that is if $\mu \in \A$,
then $\mu' \in \A$, provided $\mu,\mu'$ agree on
$\R^d\setminus\{0\}$.  By translation-invariance,
$$0<c:=\P(\Pi+\delta_0\in \A)=\P(\Pi\in \A)=\P(\Pi\in \theta_x \A)$$
for every $x\in\R^d$.  Hence the translation-invariant random set $G:=\{x\in
\R^d: \Pi\in \theta_x \A\}$ has intensity $\E \les([0,1]^d \cap G) =c$.
Moreover, if $U$ is uniformly distributed in $[0,1]^d$ and independent of
$\Pi$, then $\P(U \in G) =c$.   Therefore defining the set
$$\A':=\{\mu\in \XX: \exists x\in[\mu]\cap [0,1]^d
\text{ with } \mu\in \theta_x \A\},$$ we deduce that $\P(\Pi+\delta_U\in
\A')>0$. (Recall that $\A$ does not care whether there is a point at $0$.)
On the other hand by the Palm property (\ref{palmeq}) we have
\begin{eqnarray*}
\P(\Pi\in \A') &\leq& \E\#\{x\in[\Pi]\cap [0,1]^d
\text{ with } \Pi\in \theta_x \A\} \\ &=& \lambda\les S \cdot \P(\Pi^*\in \A)=0.
\end{eqnarray*}
Thus $\Pi$ is not insertion-tolerant.
\end{proof}

The following observations will be useful in the proof that (\ref{original})
implies (i) in  Theorem \ref{thm-instol-stat-eq}.
\begin{lemma}
\label{originalfubini} Let $\Pi$ be a translation-invariant point process on
$\R^d$ with finite intensity.  If $Y$ is any $\R^d$-valued random variable,
and $U$ is uniformly distributed in $S \in \borelg$ and independent of $(\Pi,
Y)$, then $\theta_U \theta_Y \Pi \prec \Pi$.
\end{lemma}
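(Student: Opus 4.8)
The plan is to integrate out the independent uniform shift $U$ and thereby reduce everything to the defining property of translation-invariance, namely that $\theta_v\Pi\eqd\Pi$ for each \emph{deterministic} $v\in\R^d$. Since $\theta_u\theta_Y\mu=\theta_{u+Y}\mu$ for every $\mu\in\XX$ (the translation group of $\R^d$ being abelian), the random measure $\theta_U\theta_Y\Pi$ is just $\Pi$ translated by the random vector $U+Y$. So I fix $\A\in\goth{M}$ with $\P(\Pi\in\A)=0$ and aim to show $\P(\theta_U\theta_Y\Pi\in\A)=0$.

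First I would condition on $(\Pi,Y)$ and use that $U$ is uniform on $S$ (so $\les(S)\in(0,\infty)$) and independent of $(\Pi,Y)$ to get
\[
\P(\theta_U\theta_Y\Pi\in\A)=\frac{1}{\les(S)}\,\E\int_S \one{\theta_{u+Y}\Pi\in\A}\,du .
\]
For each fixed value of $Y$ the change of variables $v=u+Y$ is a translation of Lebesgue measure, so the inner integral equals $\int_{\R^d}\one{v\in Y+S}\,\one{\theta_v\Pi\in\A}\,dv$. The integrand is nonnegative and jointly measurable in $(v,\omega)$ (the map $(v,\omega)\mapsto v-Y(\omega)$ is continuous in $v$ and measurable in $\omega$, hence jointly measurable, and one composes with $\one_S$), so Tonelli's theorem lets me interchange $\E$ and $\int_{\R^d}$ and then bound $\one{v\in Y+S}\le 1$:
\[
\P(\theta_U\theta_Y\Pi\in\A)=\frac{1}{\les(S)}\int_{\R^d}\E\big[\one{v\in Y+S}\,\one{\theta_v\Pi\in\A}\big]\,dv
\le \frac{1}{\les(S)}\int_{\R^d}\P(\theta_v\Pi\in\A)\,dv .
\]
By translation-invariance $\theta_v\Pi\eqd\Pi$ for each fixed $v$, so $\P(\theta_v\Pi\in\A)=\P(\Pi\in\A)=0$; the integrand then vanishes identically, the integral is $0$, and hence $\theta_U\theta_Y\Pi\prec\Pi$.

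The one point needing care — and the reason for discarding $\one{v\in Y+S}$ \emph{before} rather than after integrating in $v$ — is that the integral runs over all of $\R^d$, a set of infinite Lebesgue measure: integrating out $\one{v\in Y+S}$ first would merely produce the finite constant $\les(S)$ while reintroducing the coupling between $Y$ and $\Pi$ over which we have no control. Bounding it by $1$ immediately leaves the integrand $\P(\theta_v\Pi\in\A)\equiv 0$, so the infinite domain is harmless. Beyond this bookkeeping I expect no obstacle; note in particular that translation-invariance is used only in its trivial form for deterministic shifts, and the finite-intensity hypothesis plays no role in the argument.
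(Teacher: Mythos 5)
Your proof is correct and is essentially the same as the paper's: both condition on $(\Pi,Y)$, change variables to absorb the shift $Y$, bound the indicator of the translated set $Y+S$ by $1$, and conclude via translation-invariance under deterministic shifts (and indeed the finite-intensity hypothesis is not used in either argument).
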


\begin{lemma}
\label{wextrahead} Let $\Pi$ be a translation-invariant point process on
$\R^d$ with finite intensity.  There exists a $\Pi$-point $Z$ such that
$\Pi^{*} \prec \theta_{-Z}\Pi$.
\end{lemma}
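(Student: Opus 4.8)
The plan is to realize $Z$ as a randomly chosen point of $\Pi$, selected so that every $\Pi$-point has a positive chance of being picked; the Palm formula \eqref{palmeqg} then forces any $\theta_{-Z}\Pi$-null event to be a $\Pi^*$-null event.

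First I would dispose of the trivial case: if the intensity $\lambda$ vanishes then $\Pi$ is a.s.\ the zero measure and $\Pi^*$ is not defined, so I assume $\lambda>0$, in which case $\Pi$ has infinitely many points a.s. Using local finiteness I would then enumerate the support measurably, writing $[\Pi]=\{X_1,X_2,\dots\}$ with the $X_k$ ordered, say, by distance to the origin and with lexicographic tie-breaking; each $X_k$ is a $\Pi$-point. Taking $K$ independent of $\Pi$ with $\P(K=k)=2^{-k}$ and setting $Z:=X_K$ produces a $\Pi$-point (recall that the definition of a $\Pi$-point permits dependence on auxiliary randomness), and conditionally on $\Pi$ this $Z$ hits every point of $[\Pi]$ with positive probability.

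For the verification, suppose $\P(\theta_{-Z}\Pi\in \A)=0$ for some $\A\in\goth{M}$. Since $K$ is independent of $\Pi$, $\P(\theta_{-Z}\Pi\in \A)=\sum_{k\ge 1}2^{-k}\,\P(\theta_{-X_k}\Pi\in \A)$, so $\P(\theta_{-X_k}\Pi\in \A)=0$ for every $k$; summing over $k$ then gives $\P\big(\exists\,x\in[\Pi]:\theta_{-x}\Pi\in \A\big)=0$. In particular $\#\{x\in[\Pi]\cap[0,1]^d:\theta_{-x}\Pi\in \A\}=0$ a.s., so applying \eqref{palmeqg} to $f(\mu,x)=\onea{\mu\in \A}\,\onea{x\in[0,1]^d}$ (equivalently, using the Palm property \eqref{palmeq}) yields $\lambda\,\P(\Pi^*\in \A)=0$, hence $\P(\Pi^*\in \A)=0$. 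This is precisely $\Pi^*\prec\theta_{-Z}\Pi$.

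I do not expect a real obstacle here. The only steps needing a word of care are the measurable enumeration $\mu\mapsto(X_k(\mu))_k$ of the support of a simple point measure, and the standard fact that a translation-invariant point process of positive finite intensity has infinitely many points a.s.; neither is difficult. The conceptual reason the argument is this cheap — and needs no ergodicity — is that we never try to match the law of $\theta_{-Z}\Pi$ with that of $\Pi^*$: we only need $\theta_{-Z}\Pi$ to reach every local re-rooting $\theta_{-x}\Pi$, $x\in[\Pi]$, that contributes to the Palm average, which the mixture over all $X_k$ manifestly does.
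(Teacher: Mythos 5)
Your proposal is correct and follows essentially the same route as the paper: choose $Z=X_K$ with $K$ an $\N$-valued index of full support independent of $\Pi$ applied to a measurable enumeration of $[\Pi]$, and then invoke the Palm property. The only (cosmetic) difference is that you verify the absolute continuity in contrapositive form via the expectation identity \eqref{palmeqg}, whereas the paper argues forward from $\P(\Pi^*\in\A)>0$ by extracting a witness point $Z'$ with $\P(\theta_{-Z'}\Pi\in\A)>0$.
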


\begin{proof}[Proof of Theorem \ref{thm-instol-stat-eq}, (\ref{original}) $\Rightarrow$ (i)]
Suppose that $\Pi + \delta_0 \prec \Pi ^{*}$. Without loss of generality we
may assume that $\Pi$ and $\Pi^{*}$ are defined a common probability space.
By Lemma \ref{wextrahead}, there exists a $\Pi$-point $Z$ such that
\begin{equation}
\label{clearone}
\Pi^{*} \prec \theta_{-Z} \Pi.
\end{equation}

Let $U$ be uniformly distributed in a Borel set $S$ and   independent of
$(\Pi, \Pi ^{*}, Z)$.  By Lemma \ref{originalfubini}, it suffices to show
that $\Pi + \delta_U \prec \theta_U \theta_{-Z} \Pi$.    Since $U$ is
independent of $(\Pi, \Pi^{*},Z)$, from \eqref{clearone} it follows that
$\theta_U \Pi^{*} \prec \theta_U \theta_{-Z} \Pi$.   Thus it remains to show
that $\Pi + \delta_U \prec \theta_U \Pi^{*}$.

Since $\Pi$ is translation-invariant and $U$ is independent of $\Pi$ we have
\begin{equation}
\label{insU}
\theta_U (\Pi + \delta_0) \eqd \ \Pi + \delta_U.
\end{equation}
Since we assume that $\Pi + \delta_0 \prec \Pi ^{*}$ and $U$ is independent
of $(\Pi, \Pi^{*})$ we deduce from \eqref{insU} that $\Pi + \delta_U \prec
\theta_U \Pi ^{*}$.
\end{proof}

\begin{proof}[Proof of Lemma \ref{originalfubini}]
Let $Q$ be the joint law of $\Pi$ and $Y$.    Since $U$ is independent of
$(\Pi, Y)$, by Fubini's theorem, for all
$\A \in {\goth{M}}$, we have
\begin{align*}
\P(\theta_U \theta_{Y} \Pi \in \A)
&= \frac{1}{\les(S)}\int\left(\int_S \onea{\theta_{u+ y} \pi \in \A}
du \right) dQ(\pi, y) \\
& \leq  \frac{1}{\les(S)}\int \left(\int_{\R^d} \onea{\theta_x  \pi \in \A}
dx \right) dQ(\pi, y) \\
&= \frac{1}{\les(S)}\int_{\R^d} \P(\theta_x \Pi \in \A) dx\\
&= \frac{1}{\les(S)}\int_{\R^d} \P(\Pi \in \A) dx.
\qedhere
\end{align*}
\end{proof}

Lemma \ref{wextrahead} is an immediate consequence of a   result of Thorisson
\cite{Thorissontrv}, which states that there exists a {\em shift-coupling} of
$\Pi$ and  $\Pi^{*}$; that is, a $\Pi$-point $Z$ such that $\Pi^{*} \eqd
\theta_{-Z} \Pi$.  In fact, Holroyd and Peres \cite{Extra-Heads} prove that
such a $Z$ may be chosen as a deterministic function of $\Pi$.  Since
Lemma~\ref{wextrahead} is much weaker result, we can give the following
simple self-contained proof.
\begin{proof}[Proof of Lemma \ref{wextrahead}]
Let $\{a_i\}_{i \in \N} = [\Pi]$ be an enumeration of the $\Pi$-points. Let
$K$ be a random variable with support $\N$; also assume that $K$ is
independent of $(a_i)_{i \in \N}$.  Define the $\Pi$-point $Z := a_K$. We
will show that $\Pi^{*} \prec \theta_{-Z}\Pi$.

Let $\A \in \M$ be so that $\P( \Pi^{*} \in \A) > 0$.  By the Palm property
\eqref{palmeq}, there exists a $\Pi$-point $Z'= Z'(\A)$ such that $\P(
\theta_{-Z'} \Pi \in \A) > 0$; moreover,  there exists $i \in \N$ such that
$\P( \theta_{-Z'} \Pi \in \A,  \ Z'=a_i) > 0$.  Since $K$ is independent of
$(a_i)_{i \in \N}$, it follows from the definition of $Z$ that   $$\P(
\theta_{-Z'} \Pi \in \A, \; Z'=a_i,\; K=i,\; Z=a_i) > 0.$$  Therefore, $\P(
\theta_{-Z} \Pi \in \A) > 0$.
\end{proof}

\section{Continuum percolation}
\label{contperc}

Theorem \ref{percuniq} is an immediate consequence of the following.
Consider the Boolean continuum percolation model for a point process $\Pi$.
Let $W$ denote the cluster of containing the origin. For $M > 0$, an
{\df{$\boldsymbol{M}$-branch}} is an unbounded component of $W \cap B(0,
M)^c$.

\begin{lemma}
\label{choices} \sloppypar For a translation-invariant ergodic insertion-tolerant
point process, the number of unbounded clusters is a fixed
constant a.s.\ that is zero, one, or infinity.
\end{lemma}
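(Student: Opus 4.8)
The plan is to adapt the classical Burton–Keane argument to the insertion-tolerant setting. First I would show, using ergodicity, that the number $N$ of unbounded clusters of $\Oo(\Pi)$ is an a.s.\ constant in $\{0,1,2,\ldots\}\cup\{\infty\}$: indeed $N$ is invariant under all translations of $\R^d$, hence a.s.\ equal to some constant $k$ by ergodicity. It then remains to rule out $k\in\{2,3,\ldots\}$, i.e.\ to show that $N$ cannot be a finite constant $\geq 2$.

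Suppose for contradiction that $N=k$ a.s.\ with $2\le k<\infty$. The key use of insertion-tolerance is that adding a single point can merge clusters: if $U$ is uniformly distributed on a large ball $B(0,M)$ and independent of $\Pi$, then $\Oo(\Pi+\delta_U)=\Oo(\Pi)\cup B(U,R)$, and for a suitable choice of $M$ (large enough that $B(0,M)$ meets at least two of the unbounded clusters with positive probability — which holds for $M$ large since all $k$ unbounded clusters are a.s.\ nonempty and each eventually enters every neighbourhood of infinity, so in particular, by translation-invariance, for some deterministic $M$ the ball $B(0,M)$ intersects $\geq 2$ of them with positive probability) the event
\begin{equation*}
E:=\{\Oo(\Pi+\delta_U)\text{ has exactly }k-1\text{ unbounded clusters}\}
\end{equation*}
has positive probability: on the event that $B(U,R)$ simultaneously touches (at least) two distinct unbounded clusters of $\Oo(\Pi)$, those clusters get glued into one, strictly decreasing the count, while no unbounded cluster is ever destroyed or created by adding a bounded ball. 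Since $\Pi+\delta_U\prec\Pi$ by insertion-tolerance, $\P(E)>0$ forces $\P(\Pi+\delta_U\text{ has exactly }k-1\text{ unbounded clusters, before insertion})$ — more precisely, the event that $\Pi$ itself has a configuration that, after the (generic) insertion, yields $k-1$ clusters — to pull back to a positive-probability event for $\Pi$; carefully, $\Pi+\delta_U\prec\Pi$ means any event of positive probability for $\Pi+\delta_U$ must, in particular, not be $\Pi$-null, and one checks that the number of unbounded clusters of $\Oo(\Pi+\delta_U)$ as a function of the configuration $\Pi+\delta_U$ lands in the set $\{k-1\}$ with positive probability, whereas the number of unbounded clusters of $\Oo(\cdot)$ is a.s.\ equal to $k$ on the support of the law of $\Pi$; since $\mathrm{law}(\Pi+\delta_U)$ is supported (up to absolute continuity) inside $\mathrm{supp}(\mathrm{law}(\Pi))$, we would need the configuration $\Pi+\delta_U$ to have $k$ unbounded clusters a.s., contradicting $\P(E)>0$.

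The cleanest way to package this: let $\A_j:=\{\mu\in\XX:\Oo(\mu)\text{ has exactly }j\text{ unbounded clusters}\}$. Then $\P(\Pi\in\A_k)=1$, so $\P(\Pi\in\A_j)=0$ for all $j\neq k$. By insertion-tolerance, $\Pi+\delta_U\prec\Pi$, hence $\P(\Pi+\delta_U\in\A_j)=0$ for all $j\neq k$, i.e.\ $\P(\Pi+\delta_U\in\A_k)=1$. But we have exhibited, by the gluing argument above, an event of positive probability on which $\Pi+\delta_U\in\A_{k-1}$, and $k-1\neq k$ — contradiction. Therefore $k\notin\{2,3,\ldots\}$, so $N\in\{0,1,\infty\}$ a.s., completing the proof of the lemma.

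The main obstacle is the geometric input needed to guarantee $\P(E)>0$: one must argue that for some deterministic radius $M$, with positive probability the ball $B(0,M)$ simultaneously intersects two distinct unbounded clusters of $\Oo(\Pi)$, so that a point inserted near the centre (within distance $M$, and with $R$ not too small relative to the local geometry) bridges them. This follows because, conditionally on $N=k\geq 2$, each unbounded cluster is unbounded hence reaches arbitrarily far, so by a union bound and translation-invariance the probability that two of them come within distance $2R$ of a common point in $B(0,M)$ tends to a positive limit as $M\to\infty$; fixing such an $M$ and noting that $U$ has positive density on the relevant sub-ball makes $\P(E)>0$. The remaining steps — that adding a bounded ball neither creates nor destroys unbounded clusters, and the measure-theoretic pullback via $\prec$ — are routine.
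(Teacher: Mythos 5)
Your overall strategy --- pull back the event ``fewer unbounded clusters after insertion'' through absolute continuity and contradict the ergodic constancy of the cluster count --- is the same as the paper's, and the measure-theoretic packaging via the sets $\A_j$ is fine. The genuine gap is in the geometric step you yourself flag as the main obstacle: you need, with positive probability, two \emph{distinct} unbounded clusters of $\Oo(\Pi)$ to come within distance $2R$ of a common point, so that a single inserted ball $B(U,R)$ bridges them. Unboundedness of each cluster does not give this. Each unbounded cluster must intersect $B(0,M)$ for large $M$, but two clusters can stay at mutual distance greater than $2R$ \emph{everywhere} (picture two occupied slabs separated by an empty corridor of width $100R$); ``reaches arbitrarily far'' controls the distance of each cluster from the origin, not the distance between the clusters, and your union-bound/translation-invariance sentence does not produce the claimed positive limit. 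Since you invoke only single-point insertion, $\Pi+\delta_U\prec\Pi$, the argument cannot bridge a separation whose width is of order $M$ rather than $R$, and this is exactly where it breaks; nothing in your proof uses insertion-tolerance to rule out such a separation.

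The repair is the paper's: use the multi-point form of insertion-tolerance, Theorem \ref{thm-instol-eq}\,\eqref{finiteadd}. Choose $N$ so large that with positive probability every one of the (finitely many) unbounded clusters meets $B(0,N)$; for each $x$ in the finite grid $(R/4)\Z^d\cap B(0,N)$ let $U_x$ be uniform in $B(x,R)$, independent of everything, and set $\F:=\sum_x \delta_{U_x}$. Then $B(0,N)\subset\bigcup_x B(U_x,R)$, so the whole ball becomes occupied and connected, and with positive probability $\Oo(\Pi+\F)$ has at most one unbounded cluster; since $\Pi+\F\prec\Pi$, this gives $\P(K(\Pi)\le 1)>0$, and ergodicity forces $K\le 1$, contradicting $K=k\ge 2$. (A minor further point: your event $E$ should read ``at most $k-1$'' or ``different from $k$'' rather than ``exactly $k-1$'', since an inserted ball could merge more than two clusters; but that is cosmetic compared with the gap above.)
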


\begin{lemma}
\label{three}  If an insertion-tolerant point process has  infinitely many
unbounded clusters, then with positive probability there exists $M >0$ so
that there at least three $M$-branches.
\end{lemma}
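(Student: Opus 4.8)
The plan is to follow the Burton--Keane scheme: assume there are infinitely many unbounded clusters, and use insertion-tolerance to force an ``encounter point'' configuration, which is where three $M$-branches appear. First I would fix $M > 0$ large and observe that, since there are infinitely many unbounded clusters a.s., with positive probability there are at least three distinct unbounded clusters that each intersect the ball $B(0,M)$ — indeed, for each unbounded cluster pick its closest point to the origin, and these closest points cannot all lie outside any fixed ball if there are infinitely many of them, so by choosing $M$ large enough we can guarantee with positive probability that at least three unbounded clusters meet $B(0,M)$. Pin down an event $E_M$ of positive probability on which at least three unbounded clusters, say reaching the boundary $\partial B(0,M)$ at three distinct points in three distinct components of $\Oo(\Pi) \cap B(0,M)^c$, are witnessed; note these three unbounded pieces are $M'$-branches of their respective cluster-of-a-point for suitable $M'$, but they are not yet branches of a single cluster.

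The key step is then to merge these three unbounded clusters into one by inserting a single new point (or a bounded random set of points) near the origin whose ball of radius $R$ bridges all three. Concretely, on $E_M$ there is a ball $B(x_0, \rho) \subset B(0,M)$ (with $x_0, \rho$ measurable functions of $\Pi$) such that placing a point anywhere in $B(x_0,\rho)$ connects all three unbounded pieces through the origin region into a single cluster; one can absorb the measurable dependence by covering $B(0,M)$ by finitely many small balls and using that on a positive-probability sub-event the relevant small ball is a fixed deterministic one. Apply insertion-tolerance (in the form of Theorem \ref{thm-instol-eq}\eqref{finiteadd}, or directly Corollary \ref{weak}) to insert a uniform point $U$ in that fixed small ball: the resulting process $\Pi + \delta_U$ satisfies $\Pi + \delta_U \prec \Pi$, and on the image of $E_M$ there is now a single unbounded cluster containing the origin that splits into at least three unbounded components once we remove $B(0,M'')$ for $M''$ slightly larger than $M$ — i.e. at least three $M''$-branches. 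Since $\Pi + \delta_U \prec \Pi$ and this configuration has positive probability for $\Pi + \delta_U$, it has positive probability for $\Pi$ as well (a set of measure zero under $\Pi$ would be measure zero under $\Pi + \delta_U$, contradiction), which gives the conclusion with $M := M''$.

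The main obstacle is the measurability/``fixed ball'' bookkeeping: the ball into which we insert the point, and the radius $M$ at which branches appear, both depend on $\Pi$, and insertion-tolerance in its basic form only lets us insert a uniform point in a \emph{deterministic} set $S$. I would handle this by the standard device of partitioning the relevant region into countably (indeed finitely, at a given scale) many deterministic balls and passing to a positive-probability sub-event on which the random ball equals a particular deterministic one, exactly as in the proof of Lemma \ref{unipick}; alternatively one can invoke Corollary \ref{weak} with a finite point process $\gothh{F}$ whose conditional law given $\Pi$ is nice and supported on the required bridging region. A secondary point to check carefully is that inserting one point genuinely creates three branches rather than merging the three unbounded pieces with each other ``past'' the origin in a way that still leaves only one or two branches; choosing the three witnessed unbounded pieces to exit $B(0,M)$ in three distinct components of the complement, and taking $M''$ just slightly larger than $M$, ensures the three pieces remain in distinct components of $\Oo(\Pi+\delta_U) \cap B(0,M'')^c$, so they are three genuine $M''$-branches of the common cluster.
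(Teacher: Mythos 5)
There is a genuine gap at the key merging step. You claim that, on the event that three unbounded clusters meet $B(0,M)$, there is a single ball $B(x_0,\rho)\subset B(0,M)$ such that inserting \emph{one} point anywhere in it connects all three unbounded pieces. This is false in general: the radius $M$ needed to witness three unbounded clusters with positive probability has nothing to do with the percolation radius $R$, and an inserted point only bridges clusters whose points lie within distance $2R$ of it. Three unbounded clusters can intersect $B(0,M)$ at locations pairwise much farther than $2R$ apart and need never come close to one another anywhere (think of unbounded clusters that are well-separated parallel ``columns'' or ``sheets'', as in the examples of Section \ref{examples}), so no single added ball of radius $R$ can merge them. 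Your parenthetical ``or a bounded random set of points'' points in the right direction, but the proposal never constructs such a bridging set, and the one concrete claim it rests on does not hold.

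The repair is exactly what the paper does (following the proof of Lemma \ref{choices}): choose $N$ so that with positive probability three unbounded clusters meet $B(0,N)$, take the \emph{deterministic} grid $S=(R/4)\Z^d\cap B(0,N)$, and insert one uniform point $U_x$ in $B(x,R)$ for each $x\in S$, independent of $\Pi$. The added occupied region then covers $B(0,N)$, so all clusters meeting $B(0,N)$ merge into the cluster of the origin, and $\Pi+\gothh{F}$ has at least three $(N+R)$-branches with positive probability; Theorem \ref{thm-instol-eq}\eqref{finiteadd} gives $\Pi+\gothh{F}\prec\Pi$, and the conclusion follows. Note that this also dissolves the measurability/``fixed ball'' bookkeeping you devote a paragraph to: because the insertion locations form a deterministic family of balls independent of $\Pi$, the basic finite-addition form of insertion-tolerance applies directly, and neither Corollary \ref{weak} nor the sub-event selection device of Lemma \ref{unipick} is needed. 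Your final observation (that the three pieces exit in distinct components of the complement and remain distinct branches at a slightly larger radius) is sound, provided the enlarged radius exceeds $N$ by enough to swallow the added balls, e.g.\ $N+R$ or $N+2R$, rather than ``slightly larger than $M$''.
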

\begin{theorem}
\label{comb}  For all $M >0$,  a translation-invariant ergodic point process
has at most two $M$-branches.
\end{theorem}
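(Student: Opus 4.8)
The plan is to run the Burton--Keane counting argument in the continuum, in the spirit of the uniqueness proofs of \cite[Chapter 7]{roy}. Fix $M>0$ and suppose, for a contradiction, that with positive probability the cluster $W$ containing the origin has at least three $M$-branches. Set $K:=M+R$, and call a point $x$ an \emph{encounter point} if $x\in[\Pi]$ and the set $\Oo(\Pi)\setminus B(x,2K)$ has at least three distinct unbounded connected components, each joined to $x$ within $\Oo(\Pi)\cap B(x,2K)$. The first step is to show that the presence of three $M$-branches of $W$ forces an encounter point within bounded distance of the origin: the three $M$-branches are already pairwise separated outside $B(0,M)$ and are joined to one another only through the bounded open occupied set $W\cap B(0,M)$, so taking a suitable $\Pi$-point $x^{*}$ of $W$ so close to the origin that $B(0,M)\subset B(x^{*},2K)$ and deleting $B(x^{*},2K)$ destroys this linkage while leaving three distinct unbounded remnants of the branches, each still reaching $x^{*}$ within $B(x^{*},2K)$; the remnants stay distinct because two different components of $\Oo(\Pi)\setminus B(x^{*},2K)$ cannot reconnect outside that ball. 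Hence, covering a fixed bounded neighbourhood of the origin by finitely many unit cubes and using translation invariance, the event that a given unit cube of $\Z^d$ contains an encounter point has probability $q>0$ not depending on the cube.

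Write $\Lambda_n:=[-n,n]^d$. By the multiparameter pointwise ergodic theorem applied to the indicator that a unit cube contains an encounter point, almost surely the number of unit cubes inside $\Lambda_n$ containing an encounter point is at least $\tfrac{1}{2}q(2n)^d$ for all large $n$; since distinct cubes are disjoint, the number $T_n$ of encounter points in $\Lambda_n$ satisfies $T_n\ge cn^d$ eventually, a.s., for some $c>0$. The contradiction comes from the combinatorial heart of Burton--Keane: a surface-order upper bound, namely $T_{n/2}\le Cn^{d-1}$ for all large $n$. The encounter points carry a tree structure --- along any branch of an encounter point, the first further encounter point met (if any) is a well-defined neighbour --- and restricting to the encounter points in the inner box $\Lambda_{n/2}$ one obtains a finite forest in which those $T_{n/2}$ points are internal vertices of degree at least three, while the leaves are pairwise disjoint unbounded pieces of $\Oo(\Pi)$ that cross $\partial\Lambda_n$; such a forest has at least $T_{n/2}+2$ leaves. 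For $n$ large the deleted balls $B(x,2K)$ around inner encounter points lie well inside $\Lambda_n$, so near $\partial\Lambda_n$ the occupied region is untouched; hence each leaf contains an occupied ball $B(p,R)$ crossing $\partial\Lambda_n$ with $B(p,R)$ lying entirely inside that leaf, and two distinct leaves then force $\|p-p'\|\ge 2R$, since overlapping balls would lie in a common component, hence in the same leaf. A packing estimate for $2R$-separated points in the shell of width $2R$ about $\partial\Lambda_n$ bounds the number of leaves by $O(n^{d-1})$. Thus $T_{n/2}\le Cn^{d-1}$, contradicting $T_{n/2}\ge c(n/2)^d$ for large $n$; so the hypothesis fails and $W$ has at most two $M$-branches a.s.

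I expect the main obstacle to lie in the combinatorial and topological bookkeeping around this forest rather than in any single inequality. One must choose the definition of encounter point so that it is simultaneously (a) forced by the presence of three $M$-branches, (b) strong enough that deleting $B(x,2K)$ produces three genuinely distinct infinite branches attached to $x$, which is what makes the neighbour relation well defined and forces every internal vertex to have degree at least three, and (c) rigid enough that the leaves leaving $\Lambda_n$ are disjoint and $2R$-separated near the boundary, which is what yields the surface-order count. Each of these is routine in discrete Burton--Keane, but here it requires care with the topology of unions of open balls: that such a union is connected exactly when the intersection graph of the centres is, that two distinct components of the complement of a ball cannot reconnect outside that ball, that removing a ball splits a component into only finitely many pieces locally, and that the branches of an encounter point nest so as to carve the leaves into disjoint regions. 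Comparing the two scales $\Lambda_{n/2}\subset\Lambda_n$ is the device that keeps the deleted balls away from the outer boundary and thereby avoids any appeal to finiteness of the intensity of $\Pi$.
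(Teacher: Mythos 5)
The paper itself gives no proof of Theorem \ref{comb}: it cites \cite[Theorem 7.1]{roy}, whose proof is precisely the continuum Burton--Keane counting argument you outline, so your route is the intended one. Several of your devices are exactly right: the two scales $\Lambda_{n/2}\subset\Lambda_n$ make the surface-order bound deterministic (so no finite-intensity assumption is needed), and extracting a whole occupied ball $B(p,R)$ inside each leaf near $\partial\Lambda_n$ gives the $2R$-separation of centres and hence the $O(n^{d-1})$ packing bound.

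Two steps do not work as literally written. First, your encounter-point clause ``each joined to $x$ within $\Oo(\Pi)\cap B(x,2K)$'' is not forced by the presence of three $M$-branches: the portions of the branches inside $B(x^{*},2K)$ need not connect to $x^{*}$ inside that ball. For instance, two disjoint occupied ``bars'' may both pass through $B(0,M)$ while lying in the same cluster only via a detour far outside $B(x^{*},2K)$; then one of the three unbounded remnants attaches, inside the ball, only to the bar not containing $x^{*}$, and $x^{*}$ fails your definition. The standard repair is to define an encounter point through the cluster of $x$: $x\in[\Pi]$ such that $W_x\setminus B(x,2K)$ has at least three unbounded components, where $W_x$ is the cluster of $\Oo(\Pi)$ containing $x$. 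With this definition the forcing step does hold (a path joining two $M$-branches while avoiding $B(x^{*},2K)\supseteq B(0,M)$ would contradict their being distinct components of $W\setminus B(0,M)$; and each branch retains an unbounded remnant because, by local finiteness of $\Pi$, $W\setminus B(x^{*},2K)$ has only finitely many components), and it is all the counting needs, since it guarantees the three pieces are held together only through the ball around $x$. Second, ergodicity under $\R^d$ does not imply ergodicity under $\Z^d$ (a randomly shifted lattice is the standard counterexample), so the multiparameter ergodic theorem only gives convergence of your cube averages to a conditional expectation, not to $q$. This is easily bypassed: the limit is positive with positive probability, which already contradicts the sure surface-order bound, or more simply take expectations, since translation invariance alone gives $\E T_{n/2}\geq qcn^{d}$ while the deterministic bound gives $\E T_{n/2}\leq Cn^{d-1}$; ergodicity is in fact not needed for this counting step. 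Finally, be aware that the forest construction itself (well-definedness of the ``first encounter point along a branch'' and internal degree at least three, i.e.\ the compatibility and nesting of the splittings induced by different encounter points) is the genuine heart of Burton--Keane and is only asserted in your sketch; you correctly identify it as where the remaining work lies.
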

For a proof of Theorem \ref{comb} see \cite[Theorem 7.1]{roy}.

\begin{proof}[Proof of Theorem \ref{percuniq}]
From Lemma \ref{choices}, it suffices to show that there can not be
infinitely many unbounded clusters; this follows from Theorem~\ref{comb} and
Lemma~\ref{three}.
\end{proof}

For $r > 0$, let  $r\Z^d:= \ns{rz : z\in \Z^d}$.
\begin{proof}[Proof of Lemma \ref{choices}]
Let $\Pi$ be a translation-invariant ergodic insertion-tolerant point
process.  Let the occupied region be given by a union of balls of radius $R
>0$.  By ergodicity, if $K(\Pi)$ is the number of unbounded clusters, then
$K(\Pi)$ is a fixed constant a.s.   Assume that $K(\Pi) < \infty$.  It
suffices to show that $\P(K(\Pi) \leq 1) >0$.  Since $K (\Pi)< \infty$, there
exists  $N >0$ so that  every unbounded cluster intersects $B(0,N)$ with
positive probability.  Consider the finite set $S:=(\fracc{R}{4})\Z^d \cap
B(0,N)$.  For each $x \in S$,    let $U_x$ be uniformly distributed in $B(x,
R)$ and assume that the $U_x$ and $\Pi$ are independent.     Let $\gothh{F}:=
\sum_{x \in S} \delta_{U_x}$.  Since $B(0,N) \subset \cup_ {x \in S} B(U_x,
R)$, we have  that $\P(K(\Pi + \gothh{F}) \leq 1) >0$.  By Theorem
\ref{thm-instol-eq} \eqref{finiteadd}, $\Pi + \gothh{F} \prec \Pi$, so that
$\P( K(\Pi) \leq 1) >0$.
\end{proof}

\begin{proof}[Proof of Lemma \ref{three}]
The proof is similar to that of Lemma {\ref{choices}}.  Let $\Pi$ be an
insertion-tolerant point process with infinitely many  unbounded clusters.
Let the occupied region be given by a union of balls of radius $R >0$. Choose
$N$ large enough so that  at least three unbounded clusters  interest
$B(0,N)$ with positive probability. Define a finite point process $\gothh{F}$
exactly as in the proof of Lemma \ref{choices}.  The point process $\Pi +
\gothh{F}$ has at least three $(N+R)$-branches with positive probability and
Theorem \ref{thm-instol-eq} \eqref{finiteadd} implies that  $\Pi + \gothh{F}
\prec \Pi$.  Thus $\Pi$ has at least three $(N+R)$-branches with positive
probability.
\end{proof}

\section{Stable matching}
\label{stableM}

Theorems \ref{onethm} and \ref{twothm} are  consequences of the following
lemmas.   Let $\gothh{R}$ be a point process with a unique one-colour stable
matching scheme $\gothh{M}$.  Define

\begin{equation}
\label{H}
H = H(\gothh{R}):=\bigl\{x \in [\gothh{R}] : \|x - \gothh{M}(x)\| > \|x\| -1\bigr\}.
\end{equation}
This is the set of $\gothh{R}$-points that would prefer some
$\gothh{R}$-point in the ball $B(0,1)$, if one were present in the
appropriate location, over their current partners.  Also define $H$ by
\eqref{H} for the case of two-colour stable matching.

A calculation given in \cite[Proof of Theorem 5(i)]{random} shows that, for
one-colour and two-colour matchings,
\begin{equation}
\label{Hthmfive}
\E \#H  = c \, \E^{*} \big[(\gothhh{X} +1)^d\big].
\end{equation}
for some $c=c(d)\in(0,\infty)$.

\begin{lemma}[One-colour stable matching]
\label{oneH}  Let $\gothh{R}$ be a translation-in\-variant point process on
$\R^d$ with finite intensity that almost surely  is non-equidistant and has
no descending chains. If $\gothh{R}$ is insertion-tolerant, then $\P(\#H =
\infty) =1$.  If $\gothh{R}$ is deletion-tolerant, then \linebreak $\P(\#H =
\infty)>0$.
\end{lemma}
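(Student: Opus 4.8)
The goal is to show that if $\gothh{R}$ is insertion-tolerant then $\#H = \infty$ a.s., and if $\gothh{R}$ is deletion-tolerant then $\#H = \infty$ with positive probability. The natural strategy is a proof by contradiction combined with a local modification of the point process near the origin. Suppose $\P(\#H < \infty) > 0$; by translation-invariance and a countability argument one can find a deterministic bounded region — say a large ball $B(0,N)$ — such that with positive probability, \emph{no} $\gothh{R}$-point that lies outside $B(0,N)$ belongs to $H$, i.e.\ every $\gothh{R}$-point $x$ with $\|x\| > N$ satisfies $\|x - \gothh{M}(x)\| \le \|x\| - 1$. (One has to be slightly careful: $H$ as defined involves the ball $B(0,1)$, so really one wants the event that $H \subseteq B(0,N)$; since $\#H < \infty$ a.s.\ on a positive-probability event, such an $N$ exists.) The idea is then to insert (resp.\ delete) a single well-chosen point and derive a contradiction with the uniqueness and stability of the matching scheme.

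\textbf{Insertion case.} First I would work on the insertion-tolerant case, which should give the stronger conclusion. Condition on the positive-probability event that $H \subseteq B(0,N)$, and insert a point $U$ uniformly distributed in $B(0,1)$, independent of $\gothh{R}$. Consider the stable matching $\gothh{M}'$ of $\gothh{R} + \delta_U$. The point $U$ is at distance $< 2$ from the origin and hence within distance $\|x\| + 1$ of every point $x$ outside $B(0,N)$ (taking $N$ large), so for each such $x$, the pair $\{x, U\}$ has $\|x - U\| < \|x\| + 1 \le \|x - \gothh{M}(x)\| + 2$ — this bound needs tightening, but the point is that $U$ is a very attractive partner. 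By iteratively applying the mutually-closest-pair construction (which is how the stable matching is built), one argues that the insertion of $U$ triggers an infinite chain of re-matchings: $U$ gets matched to its closest neighbour, who abandons their old partner, who then re-matches, and so on, and because no far-away point is in $H$ the chain cannot terminate — it must propagate to infinity. This produces an event of positive probability on which the stable matching of $\gothh{R} + \delta_U$ differs from that of $\gothh{R}$ on infinitely many edges, in a way that forces $\gothh{R} + \delta_U$ to have \emph{more} points (or an otherwise impossible configuration) — more carefully, one shows $\gothh{R} + \delta_U \not\prec \gothh{R}$ directly, say by exhibiting a measurable event of the configuration $\gothh{R} + \delta_U$ that has zero probability under $\gothh{R}$ (for instance, that there is a point whose removal leaves a configuration whose stable matching has the ``$H \subseteq B(0,N)$'' property but with one fewer point in a bounded region — a parity/counting obstruction). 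This contradicts insertion-tolerance via Theorem~\ref{thm-instol-eq}. Running the same argument starting the insertion in an arbitrary ball $B(z,1)$ and using ergodicity upgrades ``positive probability'' to ``probability one'' for $\#H = \infty$.

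\textbf{Deletion case.} For the deletion-tolerant case the roles reverse: on the event $H \subseteq B(0,N)$, I would delete the $\gothh{R}$-point $Z$ closest to the origin (a genuine $\gothh{R}$-point, so deletion-tolerance applies). Its former partner $\gothh{M}(Z)$ is now unmatched, and in the stable matching of $\gothh{R} - \delta_Z$ this vacancy again propagates outward through a chain of re-matchings; since no distant point is in $H$, the chain cannot be absorbed near the origin and must escape to infinity. This again yields, with positive probability, a configuration for $\gothh{R} - \delta_Z$ that is impossible for $\gothh{R}$, contradicting $\gothh{R} - \delta_Z \prec \gothh{R}$. Here one only gets a positive-probability conclusion because the deletion is of a specific $\gothh{R}$-point and there is no ergodic averaging available (one cannot simultaneously delete points near every lattice site while staying a single deletion); this is exactly why the statement is asymmetric.

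\textbf{Main obstacle.} The crux — and the step I expect to be most delicate — is making rigorous the claim that inserting or deleting a point forces an \emph{infinite} re-matching chain, and converting this into an honest violation of absolute continuity. The mutually-closest-pair construction of the stable matching (valid by the no-descending-chains and non-equidistant hypotheses, per \cite[Proposition 9]{random}) is the right tool: one shows that the symmetric difference of the two matchings is an infinite path (an ``augmenting path'' for a deletion, or its analogue for an insertion), because any finite such path would terminate at a point in $H$, forcing $H \not\subseteq B(0,N)$ unless the endpoint is in $B(0,N)$ — and one arranges $N$ large enough that this cannot happen. Turning the existence of an infinite augmenting path into $\gothh{R} \pm \delta_{(\cdot)} \not\prec \gothh{R}$ requires identifying a Borel set in $\XX$: the cleanest choice is to use the relation \eqref{Hthmfive}, $\E\#H = c\,\E^*[(X+1)^d]$, so that $\#H = \infty$ a.s.\ is literally equivalent to $\E^* X^d = \infty$, and the contradiction is simply that the modified process would have $\#H < \infty$ on a positive-probability set, inconsistent with what absolute continuity transports from $\gothh{R}$. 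I would structure the final write-up around exactly this equivalence, so that Lemma~\ref{oneH} reduces to: (a) the augmenting-path propagation lemma, and (b) the observation that finiteness of $\#H$ with positive probability is preserved neither under a uniform single insertion (by ergodicity, contradiction) nor under the specified single deletion.
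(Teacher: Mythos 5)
Your high-level strategy (assume $\#H$ finite with positive probability, localize, modify the process, contradict absolute continuity) is in the right spirit, but the step you yourself flag as the crux --- that a \emph{single} inserted or deleted point forces an infinite re-matching chain, and that this can be converted into a violation of $\prec$ --- is never established, and it is not in fact the mechanism that works. In the insertion case, if no nearby point is dissatisfied then by Lemma \ref{monohk} (\ref{ins}) an inserted point in a small ball is simply left \emph{unmatched}: there is no cascade at all, and the usable contradiction is the existence of an unmatched point in the unique stable partial matching (impossible a.s.\ since the scheme is perfect), not a parity/counting obstruction or an infinite augmenting path. Conversely, when $H$ is finite but nonempty, inserting one point $U\in B(0,1)$ does not obviously produce any a.s.-impossible feature: whoever grabs $U$ abandons a partner who may then become dissatisfied, and this uncontrolled cascade is precisely what your write-up does not control. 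Your final appeal to ergodicity to upgrade to probability one is also flawed: Lemma \ref{oneH} does not assume ergodicity, and $\{\#H=\infty\}$ is not a shift-invariant event ($H$ is defined relative to $B(0,1)$; shifting the target ball changes the ``$-1$'' slack), so ergodicity would not finish the job even if it were assumed. The deletion half has the same gap: deleting the single point nearest the origin and asserting that the vacancy ``escapes to infinity'' is unsubstantiated.

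What is missing, and what the paper's proof supplies, is a provable a.s.-impossible target event reached by a \emph{finite surgery} rather than by tracking cascades. For insertion the paper argues in two steps: (1) for every $\e>0$, $H_\e\neq\emptyset$ a.s., since otherwise an inserted uniform point in $B(0,\e)$ would be unmatched (Lemma \ref{monohk} (\ref{ins})), contradicting insertion-tolerance; (2) if $0<\#H_1<\infty$ with positive probability, insert a nice finite cloud consisting of one point near \emph{each} point of $H_1$ \emph{and} near each of their partners (Lemma \ref{addremove} --- adding points only near the $H$-points would leave their abandoned partners dissatisfied); the new points pair off locally and the resulting configuration has $H_\e=\emptyset$ for small $\e$, contradicting (1) via Corollary \ref{weak}. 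This gives $\P(\#H=\infty)=1$ with no ergodicity. For deletion, the paper shows that for any $\gothh{R}$-point $Z$ the set $N(\gothh{R},Z)$ of points preferring $Z$ is a.s.\ infinite: if it were finite, delete $N(\gothh{R},Z)$, their partners, and $\gothh{M}(Z)$ (a finite subprocess, handled by Theorem \ref{equiv} (\ref{minusF})), after which Lemma \ref{monohk} (\ref{del}),(\ref{Ndel}) show $Z$ is unmatched in the unique stable partial matching --- again an a.s.-impossible event. Then $\#H=\infty$ holds on the positive-probability event $\gothh{R}(B(0,1))>0$ because $N(\gothh{R},Z)\subseteq H$ for $Z\in B(0,1)$. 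So your proposal needs both the correct target event (an unmatched point, or empty $H_\e$) and multi-point surgeries justified by Corollary \ref{weak} and Theorem \ref{equiv}; with those in place the augmenting-path analysis becomes unnecessary.
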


\begin{lemma}[Two-colour stable matching]
\label{twoH} Let $\gothh{R}$ and $\gothh{B}$ be independent
translation-invariant ergodic point processes on $\R^d$ with equal finite
intensity, such that the point process $\gothh{R} +\gothh{B}$ is
non-equidistant and has no descending chains.  If $\gothh{R}$  is
insertion-tolerant, then $\P(\# H = \infty) =1$.    If $\gothh{R}$  is
deletion-tolerant, then $\P(\#H = \infty) >0$.
\end{lemma}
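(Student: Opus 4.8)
The plan is to obtain both assertions by perturbing $\gothh{R}$ — inserting, respectively deleting, a single point — and tracking the resulting change in the stable matching. Throughout I use that, since $\gothh{R}+\gothh{B}$ is a.s.\ non-equidistant with no descending chains, the two-colour stable matching $\gothh{M}$ of the pair $(\gothh{R},\gothh{B})$ exists, is a.s.\ unique and is a measurable factor of $(\gothh{R},\gothh{B})$, and (by \cite{random}) a.s.\ matches every point of $[\gothh{R}]$ and of $[\gothh{B}]$. Since $\gothh{R}\perp\gothh{B}$, inserting a uniform point — $\gothh{R}'=\gothh{R}+\delta_U$ with $U$ uniform in $B(0,1)$ and independent of $(\gothh{R},\gothh{B})$, which gives $\gothh{R}'\prec\gothh{R}$ by insertion-tolerance — or deleting an $\gothh{R}$-point — $\gothh{R}''=\gothh{R}-\delta_Z$, which gives $\gothh{R}''\prec\gothh{R}$ by deletion-tolerance — yields $(\gothh{R}',\gothh{B})\prec(\gothh{R},\gothh{B})$, respectively $(\gothh{R}'',\gothh{B})\prec(\gothh{R},\gothh{B})$. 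Hence all the a.s.\ statements above pass to the perturbed pairs; conversely, a positive-probability event for a perturbed pair is a positive-probability event for $(\gothh{R},\gothh{B})$.

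For the insertion case I would pass from $\gothh{M}$ to the matching $\gothh{M}'$ of $(\gothh{R}',\gothh{B})$ by the Gale--Shapley update for an added point (equivalently, by re-running the iterative mutually-closest-pair construction of \cite{random}): $U$ proposes, and one follows the resulting bumping chain $U,y_0,x_0,y_1,x_1,\dots$, where $y_i\in[\gothh{B}]$, $x_i\in[\gothh{R}]$, $\gothh{M}(x_i)=y_i$ and, in the generic linear case, $\gothh{M}'(x_i)=y_{i+1}$. Because every $\gothh{B}$-point is matched both in $\gothh{M}$ and in $\gothh{M}'$, no proposal can land on a free blue point, so the chain cannot terminate, and it visits infinitely many distinct $\gothh{R}$-points. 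The bumping inequalities give $\|x_i-\gothh{M}(x_i)\| < \|x_i-\gothh{M}'(x_i)\| < \|x_{i+1}-\gothh{M}(x_{i+1})\|$, and — this is the key geometric point — the seed condition $U\in B(0,1)$ propagates down the chain to force $\|x_i-\gothh{M}'(x_i)\|>\|x_i\|-1$, i.e.\ $x_i\in H(\gothh{R}')$, for every $i$. Hence $\#H(\gothh{R}')=\infty$ a.s., and transferring to $(\gothh{R},\gothh{B})$ gives $\P(\#H=\infty)>0$. To upgrade this to probability one I would use that $\{\#H=\infty\}$ is contained in the translation-invariant event $E:=\{\limsup_{x\in[\gothh{R}],\,\|x\|\to\infty}(\|x-\gothh{M}(x)\|-\|x\|)>-\infty\}$, so $\P(E)=1$ by ergodicity, and then exploit the monotonicity of insertion-tolerance (Lemma~\ref{monofinite}): inserting further independent uniform points preserves insertion-tolerance and only enlarges the cascade, which lets one run the construction with the inserted points placed inside $B(0,1)$ regardless of where the long matches occur, yielding $\#H=\infty$ a.s.

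For the deletion case I would run the analogous argument with $\gothh{R}''=\gothh{R}-\delta_Z$ for an $\gothh{R}$-point $Z$ near the origin, replacing bumping chains by elopement chains: removing $Z$ frees $\gothh{M}(Z)$, which triggers a chain along which the $\gothh{R}$-points move to strictly closer partners and the $\gothh{B}$-points to strictly farther ones; the chain is again infinite because all points remain matched. Using the symmetry between the roles of $\gothh{R}$ and $\gothh{B}$ (which makes $\E\#H$ equal to its blue analogue) one arranges that the chain exhibits infinitely many $\gothh{R}$-points satisfying the defining inequality of $H$, so $\#H(\gothh{R}'')=\infty$ a.s.; transferring back via $(\gothh{R}'',\gothh{B})\prec(\gothh{R},\gothh{B})$ gives $\P(\#H=\infty)>0$. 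No monotonicity tool is available for deletion, so this is all one obtains.

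The step I expect to be the main obstacle is the geometric claim that \emph{every} point of the infinite cascade — not merely the first bumped or freed point — lies in $H$: one must quantify how the localisation of the perturbation inside $B(0,1)$ (resp.\ near the origin) controls $\|x_i\|$ against the match lengths all along the chain, so that $\|x_i-\gothh{M}(x_i)\|>\|x_i\|-1$ holds throughout. Making the continuum Gale--Shapley update (or the mutually-closest-pair cascade) precise — including ruling out non-linear, branching chain behaviour — is part of the same difficulty, and the deletion bookkeeping (routing the cascade through the blue side) is a further subtlety. By contrast I expect the absolute-continuity transfers to be routine, and the ergodic upgrade in the insertion case to be straightforward once it is combined carefully with monotonicity.
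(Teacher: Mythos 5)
There is a genuine gap, and in fact the central mechanism you propose does not match what is needed. Membership in $H$ is the statement $\|x-\gothh{M}(x)\|>\|x\|-1$, i.e.\ $x$ would prefer a hypothetical point in the unit ball \emph{at the origin}; but your bumping/elopement cascade only gives that each red point along the chain gets a slightly longer (insertion) or slightly shorter (deletion) match than before, with no control whatsoever relating the new match length to $\|x_i\|$. The chain typically wanders far from the origin with match lengths of typical size, so its points have no reason to lie in $H$ --- in the deletion case the monotonicity you quote even works against you, since red points get weakly \emph{closer} partners, pushing them out of $H$. This is exactly the step you flag as the ``main obstacle,'' and it is not a technical nuisance but the missing idea. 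Two further steps are also incorrect as stated: (a) the cascade \emph{can} terminate --- a stable partial matching of the perturbed pair may leave exactly one point unmatched, and no ``free blue point'' is needed for that; and (b) the ergodic upgrade fails logically, since $\{\#H=\infty\}\subseteq E$ together with $\P(E)=1$ gives no lower bound on $\P(\#H=\infty)$.

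The paper's argument is structured in the opposite direction: rather than following a cascade, it argues by contradiction using the fact that the two-colour stable matching of $(\gothh{R},\gothh{B})$ is a.s.\ perfect, and uses the tolerance hypothesis to transfer a perturbed configuration \emph{with an unmatched point} back to the original pair. For the insertion case it simply invokes \cite[Theorem 6(i)]{random}. For the deletion case, fix a \emph{blue} point $Z$ and let $N(\gothh{R},Z)$ be the set of red points preferring $Z$ to their partners; every such point with $Z\in B(0,1)$ lies in $H$. Assuming $\#N(\gothh{R},Z)<\infty$ with positive probability, remove these points (call the resulting process $\gothh{R}-\F_1$); by the monotonicity of Lemma \ref{monotrick}\eqref{new} no remaining red point prefers $Z$, and then removing also $\M'(Z)$ leaves $Z$ unmatched in the unique stable partial matching (Lemma \ref{monohk}\eqref{Ndel} and Remark \ref{transfer}). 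By Lemma \ref{unipick} this deletion can be realised as a restriction $\gothh{R}\ronnn_{S^c}$, and deletion-tolerance via Theorem \ref{equiv}\eqref{S}, together with independence of $\gothh{B}$, gives $(\gothh{R}\ronnn_{S^c},\gothh{B})\prec(\gothh{R},\gothh{B})$; hence with positive probability the original pair would admit a stable partial matching with an unmatched blue point, a contradiction. Therefore $\#N(\gothh{R},Z)=\infty$ a.s., and $\P(\#H=\infty)\geq\P(\gothh{B}(B(0,1))>0)>0$. Your absolute-continuity transfers are fine, but the cascade-based core of your proof would need to be replaced by an argument of this contradiction type.
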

\begin{remark}
\label{prime} Recall that in the case of two-colour stable matching we
defined $X$ in terms of the distance from an $\gothh{R}$-point to its
partner. If we instead define $X'$ by replacing $\gothh{R}$ with $\gothh{B}$
in \eqref{defdist}, then $X'\eqd X$; see the discussion after
\cite[Proposition 7]{random} for details. \egg
\end{remark}

\begin{proof}[Proof of Theorem \ref{onethm}]
 Use Lemma \ref{oneH}  together with \eqref{Hthmfive}.
\end{proof}

\begin{proof}[Proof of Theorem \ref{twothm}]
Use Lemma \ref{twoH}  together with \eqref{Hthmfive} and Remark~\ref{prime}.
\end{proof}

The following lemmas concerning stable matchings in a deterministic setting
will be needed.   A {\df partial matching} of a point measure $\mu \in \XX$
is the edge set $m$ of simple graph $([\mu], m)$ in which every vertex has
degree at most one.  A partial matching is a {\df perfect} matching if every
vertex has degree exactly one.   We write $m(x) = y$ if and only if $\ns{x,y}
\in m$, and set $m(x) = \infty$ if $x$ is unmatched.   We say a partial
matching is {\df{stable}} if  there do not exist distinct points $x,y \in
[\mu]$ satisfying
\begin{equation}
\label{defstableb}
\|x-y\| < \min\ns{ \|x - m(x)\|, \|y - m(y)\|},
\end{equation}
where $\| x - m(x)\| = \infty$ if $x$ is unmatched.   Note that in any stable
partial matching there can be at most one unmatched point.

For each $\e >0$, set
$$H_{\e} = H_{\e}(\mu) :=\ns{x \in [\mu] : \|x -  m(x)\| > \|x\| -\e}.$$
For each $y\in \R^d$, set
 $$N(\mu, y) := \ns{x \in [\mu] \setminus \ns{y}: \|x - m(x)\| > \|x - y\|}.$$
This is the set of $\mu$-points that would prefer $y \in \R^d$ over their partners.

\begin{lemma}
\label{monohk} If $\mu\in \XX$ is non-equidistant and has no descending
chains, then $\mu$ has an unique stable partial matching $m$. In addition, we
have the following properties.

\begin{enumerate}[(i)]
\item \label{del} If $\ns{x,y} \in m$ is a matched pair, then $m
    \setminus\ns{\ns{x,y}}$ is the unique stable partial matching of $\mu
    -\delta_x - \delta_y$.
\item \label{ins} Let $\e > 0$.  If $m$ is a perfect matching and
    $\#H_{\e}=0$, then for all $x \in B(0, \e)$ such that $\mu +
    \delta_x$ is non-equidistant,   $m$ is the unique stable  partial
    matching of $\mu + \delta_x$; in particular, $x$ is unmatched in $m$.
\item \label{Ndel} If $\ns{x, y} \in m$ is a matched pair and $\# N(\mu,
    y) =0$, then $m\setminus \nss{x,y}$ is the unique stable partial
    matching of $\mu - \delta_x$, and in particular, $y$ is left
unmatched.
\end{enumerate}
\end{lemma}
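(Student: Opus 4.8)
The plan is to first establish existence and uniqueness of the stable partial matching, and then to verify the three structural properties (i), (ii), (iii) by essentially local arguments. For existence and uniqueness, I would appeal to the iterative mutually-closest-pair construction already used in \cite{random}: since $\mu$ has no descending chains, at each stage there is at least one mutually-closest pair, and the absence of descending chains guarantees the procedure terminates ``locally'' in the sense that each point is matched after finitely many rounds; non-equidistance ensures the mutually-closest pair at each stage is unique, so the construction is canonical. Uniqueness of the stable partial matching among all stable partial matchings follows by the standard swapping argument: if $m$ and $m'$ are two stable partial matchings and $\{x,y\}$ is a mutually-closest pair of $\mu$, then stability forces $\{x,y\} \in m$ and $\{x,y\} \in m'$; deleting $x,y$ and inducting (again using no descending chains to make the induction well-founded) gives $m = m'$. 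One must be slightly careful that the induction is on a possibly infinite point configuration, but the no-descending-chain hypothesis is exactly what makes the ``peeling'' argument work point by point; this is the kind of argument carried out in detail in \cite[Proposition 9]{random}.

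For part (\ref{del}): if $\{x,y\} \in m$, I claim $\{x,y\}$ must be a mutually-closest pair of $\mu$, or at least that $m \setminus \{\{x,y\}\}$ is stable for $\mu - \delta_x - \delta_y$. The point is that removing $x$ and $y$ cannot create a blocking pair: any two remaining points $u,v$ with $u,v \notin \{x,y\}$ have $m(u), m(v)$ unchanged, so \eqref{defstableb} for $u,v$ in $\mu - \delta_x - \delta_y$ is the same inequality as in $\mu$, which fails by stability of $m$. Hence $m \setminus \{\{x,y\}\}$ is a stable partial matching of $\mu - \delta_x - \delta_y$, and by the uniqueness just proved it is \emph{the} stable partial matching. (Note no-descending-chains and non-equidistance are inherited by subsets.)

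For part (\ref{ins}): suppose $m$ is a perfect matching of $\mu$ with $\#H_\e = 0$, and let $x \in B(0,\e)$ with $\mu + \delta_x$ non-equidistant. I want to show $m$ (viewed as a partial matching of $\mu + \delta_x$ leaving $x$ unmatched) is stable, hence the unique one. A blocking pair in $\mu + \delta_x$ either (a) involves two old points $u,v$ — impossible, same inequality as before — or (b) is of the form $\{x,v\}$ for some old point $v$, i.e.\ $\|x - v\| < \|v - m(v)\|$ (since $x$ is unmatched, $\|x - m(x)\| = \infty$). But $\|x - v\| \le \|x\| + \|v\| < \e + \|v\|$... wait, I need it the other way; rather I would use that $v \notin H_\e$ means $\|v - m(v)\| \le \|v\| - \e$, and the triangle inequality gives $\|x - v\| \ge \|v\| - \|x\| > \|v\| - \e \ge \|v - m(v)\|$, so $\{x,v\}$ is not blocking. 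Hence $m$ is stable for $\mu + \delta_x$ and $x$ is unmatched. Part (\ref{Ndel}) is analogous and slightly easier: if $\{x,y\} \in m$ and $\#N(\mu,y) = 0$, then in $\mu - \delta_x$ set $m' = m \setminus \{\{x,y\}\}$, so $y$ becomes unmatched; a blocking pair of $m'$ not involving $y$ is ruled out as before, and a blocking pair $\{y, v\}$ would require $\|y - v\| < \|v - m(v)\|$, i.e.\ $v \in N(\mu, y)$, contradicting $\#N(\mu,y) = 0$; so $m'$ is the (unique) stable partial matching of $\mu - \delta_x$ with $y$ unmatched.

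The main obstacle I expect is not any single computation but rather the careful handling of the existence/uniqueness part for infinite configurations: one must make precise that ``no descending chains'' allows the peeling induction to reach every point, so that both the construction terminates locally and the uniqueness swap-argument is well-founded. Once that foundational fact is in hand (and it is essentially quoted from \cite[Proposition 9]{random}), properties (i)--(iii) reduce to the routine observation that deleting points or inserting a far-enough-away point does not change the relevant distance inequalities, combined with the hypotheses $\#H_\e = 0$ or $\#N(\mu,y) = 0$ which were designed precisely to kill the one new type of blocking pair that could arise.
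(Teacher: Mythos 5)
Your proposal is correct and follows essentially the same route as the paper: existence and uniqueness are quoted from the mutually-closest-pair construction of \cite{random} (the paper cites \cite[Lemma 15]{random}), and parts (i)--(iii) are then verified by checking directly from \eqref{defstableb} that the claimed matchings admit no blocking pairs, with the hypotheses $\#H_\e=0$ and $\#N(\mu,y)=0$ ruling out the only new candidates. Your triangle-inequality check for (ii) and the $N(\mu,y)$ check for (iii) are exactly the verifications the paper declares immediate, so there is nothing to add.
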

\begin{proof}
The existence and uniqueness is given by \cite[Lemma 15]{random}. Thus for
(i)--(iii) it suffices to check that the claimed matching is stable, which is
immediate from the definition \eqref{defstableb}.
\end{proof}

The next lemma is a simple consequence of Lemma \ref{monohk}.
\begin{lemma}
\label{addremove} Suppose that $\mu \in \XX$ is non-equidistant and has no
descending chains.  Let $m$ be the unique stable matching of $\mu$.  Suppose
that $\ns{x,y} \in m$ and $0 \not \in [\mu]$.  There exists  $\e >0$ such
that for $\les$-a.a.\ $x' \in B(x, \e)$ and  $y' \in B(y, \e)$:   the unique
stable matching  $m'$ of $\mu + \delta_{x'} + \delta_{y'}$   is given by $$
m' =(m \setminus \nss{x,y}) \cup \ns{\ns{x,x'} , \ns{y,y'}},$$ and
furthermore, ${x,x'}, {y,y'} \not \in H_{\e}(\mu + \delta_{x'} + \delta_{y'})
\subseteq H_{\e}(\mu)$.
\end{lemma}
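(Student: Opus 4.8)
The plan is to derive everything from Lemma \ref{monohk}, using the finite-range/local nature of the stable matching $m$ near the fixed pair $\ns{x,y}$. First I would choose $\e$ small. Since $\mu$ is locally finite and non-equidistant, and since $0 \notin [\mu]$, I can pick $\e > 0$ small enough that: (a) $B(x,\e)$ and $B(y,\e)$ contain no $\mu$-point other than $x$ and $y$ respectively; (b) $B(0,\e)$ contains no $\mu$-point; (c) $\e$ is smaller than the distance from $\ns{x,y}$ to any competing pair, so that the stability inequalities for $m$ near $x,y$ hold with room to spare; and (d) $\|x\| > \e$ and $\|y\| > \e$, so that $x,y \notin H_\e(\mu)$ (this uses $0 \notin [\mu]$, hence $x,y \neq 0$, and we may further shrink $\e$ below $\min\ns{\|x\|-\|x-m(x)\|, \|y\|-\|y-m(y)\|}$ using that these quantities are negative — wait, that is automatic only if $x,y \notin H_\e$; I would instead shrink $\e$ directly so that $\|x - m(x)\| \le \|x\| - \e$ and likewise for $y$, which is possible iff $\|x-m(x)\| < \|x\|$; if not, then $x \in H_{\e}(\mu)$ already and the final containment is trivially satisfied for that point, so I handle the two cases separately).

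Next, for $\les$-a.a.\ $x' \in B(x,\e)$ and $y' \in B(y,\e)$ the point measure $\mu + \delta_{x'} + \delta_{y'}$ is non-equidistant (equidistance is a measure-zero condition on the added coordinates) and has no descending chains (adding finitely many points cannot create an infinite descending chain, as the tail of any such chain would lie in $\mu$, contradicting that $\mu$ has none — I would spell this out: a descending chain in $\mu + \delta_{x'} + \delta_{y'}$ using $x'$ or $y'$ only finitely often reduces to one in $\mu$). Hence by Lemma \ref{monohk} the measure $\mu + \delta_{x'} + \delta_{y'}$ has a unique stable matching $m'$, and it suffices to exhibit a stable perfect matching and invoke uniqueness. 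I claim $m' := (m \setminus \nss{x,y}) \cup \ns{\ns{x,x'},\ns{y,y'}}$ works: it is a perfect matching of $[\mu + \delta_{x'} + \delta_{y'}]$ (it matches $x\leftrightarrow x'$, $y \leftrightarrow y'$, and leaves all other pairs as in $m$, which was perfect on $[\mu]$). For stability I must check no blocking pair arises. A blocking pair would involve a point whose partner changed, i.e.\ one of $x, x', y, y'$. Since $\|x - x'\| < \e$ is tiny while, for any other $\mu$-point $z$, $\|x - z\| \ge \|x - m(x)\| \gg \e$ by choice of $\e$ and stability of $m$ — wait, that is only true for $z$ that preferred $x$; I would argue: for $\ns{x,z}$ to block, need $\|x-z\| < \|x - x'\| = \|x-x'\|$, impossible for $z \neq x'$ since $\|x - x'\| < \e$ and $x$ is the only $\mu$-point within $\e$ of $x$, and $y' \in B(y,\e)$ is far from $x$. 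Symmetric arguments rule out $\ns{y,z}$, $\ns{x', z}$, $\ns{y',z}$; the pair $\ns{x',y'}$ is ruled out since $\|x'-y'\|$ is comparable to $\|x-y\| = \|x - m(x)\|$ (up to $2\e$), so for $\e$ small it does not beat $\|x' - x\|$; and $\ns{x', x}$, $\ns{y',y}$ are the matched pairs themselves. So $m'$ is stable, hence equals the unique stable matching.

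Finally, for the $H_\e$ containments: the points whose distance-to-partner shrank are $x$ (now matched to nearby $x'$) and $y$ (to nearby $y'$), and $x', y'$ are new. For $x$: $\|x - m'(x)\| = \|x-x'\| < \e \le \|x\| - \e$ would be needed for $x \notin H_\e$ — no wait, $x \in H_\e$ means $\|x - m'(x)\| > \|x\| - \e$; but I want to show $x \notin H_\e$, i.e.\ $\|x - x'\| \le \|x\| - \e$, equivalently $\|x\| \ge \|x-x'\| + \e$, which holds since $\|x-x'\| < \e$ and (shrinking $\e$ if needed) $\|x\| \ge 2\e$. Same for $y$. For $x'$: $\|x' - m'(x')\| = \|x'-x\| < \e$ and $\|x'\| \ge \|x\| - \e \ge \e$, so $x' \notin H_\e$; similarly $y'$. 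For all other points $w$, $m'(w) = m(w)$ and $\|w\|$ unchanged, so $w \in H_\e(\mu + \delta_{x'} + \delta_{y'})$ iff $w \in H_\e(\mu)$; combined with $x,x',y,y' \notin H_\e(\mu + \delta_{x'}+\delta_{y'})$ this gives $H_\e(\mu + \delta_{x'} + \delta_{y'}) \subseteq H_\e(\mu)$ and the stated exclusions. The main obstacle is bookkeeping the finitely many potential blocking pairs and pinning down exactly how small $\e$ must be in terms of the (finitely many relevant) distances in $\mu$ near $x, y$, and $0$; none of it is deep, but it must be done carefully because the claimed equality of matchings is only the ``obvious'' one and all the content is in verifying stability of the candidate.
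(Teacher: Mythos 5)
Your proposal is correct in substance, but it takes a different route from the paper at the central step. The paper chooses $\e := \tfrac15\min\ns{d_x,d_y,d_0}$ where $d_v$ is the distance from $v$ to the nearest other point of $[\mu]\cup\ns{0}$, notes that for $\les$-a.a.\ $(x',y')$ the perturbed measure has a unique stable perfect matching $m'$, and then identifies $m'$ \emph{without} checking stability of a candidate: since $x,x'$ and $y,y'$ are mutually closest pairs, $\ns{x,x'},\ns{y,y'}\in m'$ by the stability definition alone; applying Lemma \ref{monohk}\eqref{del} to $m'$ (deleting these two pairs) and to $m$ (deleting $\ns{x,y}$) shows both remainders are the unique stable matching of $\mu-\delta_x-\delta_y$, and hence coincide, which gives the claimed formula for $m'$ at once. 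You instead exhibit the candidate matching and verify its stability directly by observing that any blocking pair must involve one of $x,x',y,y'$ and ruling those out, then invoke uniqueness; this is a valid, more hands-on argument, at the cost of the blocking-pair case analysis and of careful $\e$-bookkeeping that the paper's route avoids. On that bookkeeping: some of your stated constants are too weak as written -- e.g.\ condition (a) (no other $\mu$-point within $\e$ of $x$) does not by itself rule out a blocking pair $\ns{x',z}$ with $z$ within $2\e$ of $x$, and $\|x'\|\ge\e$ does not give $x'\notin H_\e$ since you need $\|x'-x\|\le\|x'\|-\e$ -- but these are exactly the adjustments you flag, and taking $\e$ to be a fifth (or a quarter) of the relevant minimal distances, as the paper does, repairs them uniformly. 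Your treatment of preservation of non-equidistance and absence of descending chains, and of the $H_\e$ containment via unchanged partners for points other than $x,x',y,y'$, matches what the paper asserts without detail.
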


\begin{proof}[Proof of Lemma \ref{addremove}]
Consider $$d_v:= \min\ns{ \|v-w\|:  w \in [\mu] \cup \ns{0}, \  w \not = v }.$$   Let
\begin{equation}
\label{defep}
\e:= \tfrac15 \min \ns{ d_x, d_y, d_0}
\end{equation}
(any multiplicative factor less than $\tfrac14$ would suffice here).  Let
$A:= B(x, \e) \times B(y, \e)$. It is easy to verify that for $\les$-a.a.\
$(x',y') \in A$ that the measure $\mu + \delta_{x'} + \delta_{y'}$ is also
non-equidistant and has no descending chains.  Thus by Lemma \ref{monohk},
for $\les$-a.a.\ $(x',y') \in A$ the measure $\mu + \delta_{x'} +
\delta_{y'}$ has a unique stable perfect matching $m'$. Clearly, by
\eqref{defep} and \eqref{defstableb}, we have that $\ns{x,x'}, \ns{y,y'} \in
m'$.  On the other hand,  by Lemma~\ref{monohk} \eqref{del}, $m' \setminus
\ns{\ns{x,x'},\ns{y,y'}}$ is the unique stable perfect matching of $\mu -
\delta_x - \delta_y$ and $m \setminus \nss{x,y}$ is the also the unique
stable perfect matching of $\mu - \delta_x - \delta_y$.  Thus
$$m'= (m \setminus \nss{x,y}) \cup \ns{\ns{x,x'} , \ns{y,y'}}.$$
It also follows from  \eqref{defep}  that
\begin{equation*}
 {x,x'},{y,y'} \not \in  H_{\e}(\mu + \delta_{x'} + \delta_{y'}) \subseteq H_{\e}(\mu).
\qedhere
\end{equation*}
\end{proof}

\begin{proof}[Proof of Lemma \ref{oneH}: the case where $\gothh{R}$ is insertion-tolerant]
Let $\gothh{R}$ be insertion-tolerant. Note that $H_1(\gothh{R}) =
H(\gothh{R})$. First, we will show that
 \begin{equation}
 \label{hkthing}
 \P( \#H_{\e}(\gothh{R}) > 0) =1 \ \text{for all} \  \e >0.
 \end{equation}

Second, we will show that if $\P( 0<\# H_1(\gothh{R})<\infty)
>0$, then there exists a finite point process $\F$ such that $\F$ admits a
nice conditional law given $\gothh{R}$, and
\begin{equation}
\label{limithk}
\lim_{ \e \to 0} \P\bigl( \#H_{\e}(\gothh{R} + \F) = 0\bigr)
= \P\bigl( 0<\# H_1(\gothh{R})<\infty\bigr) > 0.
\end{equation}

Finally, note that by Corollary \ref{weak} and the insertion-tolerance of
$\gothh{R}$ that   \eqref{limithk} and \eqref{hkthing} are in contradiction.
Thus $\P( \#H_1(\gothh{R}) = \infty) = 1$.  It remains to prove the first two
assertions.

The following definition will be useful.  Let $\XX'$ be the set of point
measures $\mu \in \XX$ such that $\mu$  has  a unique stable perfect
matching, has no descending chains, and is   non-equidistant.

Let $\e > 0$. Let $\J$ be the set of point measures $\mu \in \XX'$ such that
$\#H_{\e}(\mu) = 0$.    To show \eqref{hkthing}, it suffices to prove that
$\P(\gothh{R} \in \J) = 0$.    Let $\mu \in \J$ and let $m$ be the unique
stable perfect matching for $\mu$.   By Lemma \ref{monohk} (\ref{ins}),  for
Lebesgue-a.a.\ $x \in B(0, \e)$ the unique stable partial matching  for $\mu
+ \delta_{x}$ is $m$ (and $x$ is unmatched). If $\P(\gothh{R} \in \J) > 0$,
then it follows from the insertion-tolerance of $\gothh{R}$ that with
positive probability $\gothh{R}$ does not have a perfect stable matching, a
contradiction.

Now let $\A$ be the set of point measures $\mu \in \XX'$ such that $0<
\#H_1(\mu) < \infty$ and $0 \not \in [\mu]$.
 If $\gothh{R} \in \A$, then, by applying Lemma
\ref{addremove} repeatedly, there exists $\rho = \rho(\gothh{R})$ such that
if a point is added within distance $\rho$ of each point in $H_1(\gothh{R})$
and each of their partners, then (for $\leb$-a.a.\ choices of such points)
the resulting process $\gothh{R}'$ satisfies $H_\rho(\gothh{R}') = 0$.  Let
$\F$ be the finite point process whose conditional law given $\gothh{R}$ is
given as follows.  Take independent uniformly random points in each of the
appropriate balls of radius $\rho$ provided $\gothh{R}\in\A$; otherwise take
$\F= 0$.  By the construction,
\begin{equation*}
\lim_{\e \to 0} \P\big( \#H_\e(\gothh{R} + \F)=0  \mid
 \gothh{R} \in \A, \;   \rho(\gothh{R}) > \e \big)=1,
\end{equation*}
so \eqref{limithk} follows.
\end{proof}

\begin{proof}[Proof of Lemma \ref{oneH}: the case where $\gothh{R}$ is deletion-tolerant]
Suppose $\gothh{R}$ is deletion-tolerant.    We will show that for any
$\gothh{R}$-point $Z$
\begin{equation}
\label{N}
\#N(\gothh{R},Z) = \infty \   \text{a.s.}
 \end{equation}
From (\ref{N}) it follows that if $\gothh{R}(B(0,1))> 0$, then $\#H =
\infty$.  Since $\gothh{R}$ is translation-invariant, $\P(
\gothh{R}(B(0,1)) > 0) > 0$ and $\P(\#H = \infty) > 0$.

It remains to show (\ref{N}).  Let $Z$ be an $\gothh{R}$-point. Let $\F_1$ be
the point process with support $N(\gothh{R},Z)$, and let $\F_2$ be the point
process with support $\ns{\gothh{M}(y):  y \in N(\gothh{R},Z)}$.  Consider
the point process $\F$ defined by
\begin{eqnarray*}
\F &:=& \begin{cases}
 \F_1 + \F_2, \ \
\text{if} \ \ \#{N}(\gothh{R},Z) < \infty \\
0, \ \  \text{otherwise}.
\end{cases}
\end{eqnarray*}
Let $\gothh{M}'$ be  given by $$[\gothh{M}']:=[\gothh{M}]  \setminus
\bigcup_{x \in [\F]} \nss{x, \gothh{M}(x)}.$$ By Lemma \ref{monohk}
\eqref{del},  $\gothh{M'}$   is the unique stable matching  for $\gothh{R} -
\F$ a.s.

Towards a contradiction assume that $\P( \#{N}(\gothh{R},Z) < \infty) > 0$.
Thus,     $\P (N(\gothh{R} - \F, Z) = 0) > 0$.  By Lemma \ref{monohk}
\eqref{Ndel},   with  positive probability, $\gothh{R} - \F
-\delta_{\gothh{M}(Z)}$ has the unique stable  partial matching  given by
$\gothh{M}'$ with the pair $\ns{Z, \gothh{M}(Z)}$ removed and $Z$ left
unmatched.   From Theorem~\ref{equiv}~\eqref{minusF} and the
deletion-tolerance of $\gothh{R}$ we have $\gothh{R} - \F
-\delta_{\gothh{M}(Z)} \prec \gothh{R}$.   Thus with positive probability,
$\gothh{R}$ has a stable partial matching with an unmatched point, a
contradiction.
\end{proof}

We now turn to the two-colour case.  Given two point measures $\mu, \mu' \in
\XX$ such that $\mu + \mu'$ is a simple point measure,  we say that $m$ is a
{\df{partial}} (respectively, {\df{perfect)}} matching of $(\mu, \mu')$ if
$m$ is the edge set of a simple bipartite graph $([\mu], [\mu'], m)$ in which
every vertex has degree at most one (respectively, exactly one). We write
$m(x) = y$ if and only if $\ns{x,y} \in m$ and set $m(x)= \infty$ if $x$ is
unmatched.  We say that $m$ is {\df{stable}} if there do not exist $x \in
[\mu]$ and $y \in [\mu']$ satisfying \eqref{defstableb}.  If $\mu + \mu'$ is
non-equidistant and has no descending chains then there exists a unique
stable partial matching of $(\mu, \mu')$ \cite[Lemma 15]{random}.

\begin{remark}
\label{transfer} \sloppypar It is easy to verify that the two-colour
analogues of Lemma~\ref{monohk} \eqref{del} and \eqref{Ndel} hold. \egg
\end{remark}

We will need the following monotonicity facts about stable two-colour
matchings.  Similar results are proved in \cite[Proposition 21]{Stable-PL},
\cite{galeshapley}, and \cite{MR1415126}.

\begin{lemma}
\label{monotrick} Let $\mu, \mu \in \XX$ and assume that $\mu + \mu'$ is a
simple point measure that is non-equidistant and has no descending chains.
Let  $m$ be the   stable partial matching of $(\mu, \mu')$.
\begin{enumerate}[(i)]
\item \label{previous} Assume that  $w \not \in [\mu']$ and $\mu' +
    \delta_w$ is  non-equidistant and has no descending chains. If $m'$
    is the stable partial matching of $(\mu, \mu' + \delta_w)$, then
$$\|z - m(z) \| \geq \|z - m'(z)\| \ \text{for all} \  z \in [\mu].$$
\item\label{new} Let $x \in [\mu]$.   If $m'$ is the stable partial
    matching of $(\mu - \delta_x, \mu')$, then
\begin{equation}
\label{monotwocol}
\|z - m(z) \| \geq \|z - m'(z)\| \ \text{for all} \  z \in [\mu - \delta_x].
\end{equation}
\end{enumerate}
\end{lemma}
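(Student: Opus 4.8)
The plan is to prove both monotonicity statements in Lemma \ref{monotrick} by the same device, namely exhibiting an explicit alternating structure that connects $m$ and $m'$ and then invoking stability. Throughout I will use the fact (Lemma \ref{monohk} together with \cite[Lemma 15]{random}) that each of $(\mu,\mu')$, $(\mu,\mu'+\delta_w)$, and $(\mu-\delta_x,\mu')$ has a \emph{unique} stable partial matching, and that in a stable partial matching of a bipartite instance at most one point on each side is unmatched.

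For part \eqref{previous}: suppose toward a contradiction that some $z_0 \in [\mu]$ has $\|z_0 - m(z_0)\| < \|z_0 - m'(z_0)\|$ (interpreting an unmatched value as $\infty$). Consider the symmetric difference $m \triangle m'$; since both are partial matchings of point sets differing only by the single extra point $w$ on the $\mu'$-side, every component of $m \triangle m'$ is a path or an even cycle, and $z_0$ lies on a path that alternates between $m'$-edges and $m$-edges starting with the $m'$-edge $\{z_0, m'(z_0)\}$. The key observation is that along such an alternating path, the assumption at $z_0$ propagates: I would argue by following the path $z_0, m'(z_0), z_1, m'(z_1), \ldots$ where $z_{i+1} = m(m'(z_i))$, and show that at each step either we reach a $\mu$-point that is $m$-matched but not $m'$-matched (impossible, since $m'$ is defined on a \emph{superset} of the $\mu'$-side and a stable matching leaves essentially nothing unmatched there — here one uses that adding a point to $\mu'$ cannot leave a previously matched $\mu$-point unmatched in the unique stable matching), or we obtain a blocking pair for one of the two matchings, contradicting stability. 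Concretely, if $z_{i}$ prefers $m(z_i)$ to $m'(z_i)$ but $m(z_i)$ (a $\mu'$-point) is matched by $m'$ to $z_{i+1}$, then since $m'$ is stable the point $m(z_i)$ must prefer $z_{i+1}$ to $z_i$; this then forces $z_{i+1}$ to prefer $m(z_{i+1})$ to $m'(z_{i+1})$ (else $\{z_{i+1}, m(z_i)\}$ would block $m$), so the inequality propagates. Since $[\mu]$ is infinite but the alternating path is locally finite, I need the path to terminate, and the only way it can terminate is at the point $w$ (the unique new point) or at an unmatched point; tracing through, this forces $w$ to be involved and yields a blocking pair for $m$, the desired contradiction.

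Part \eqref{new} is essentially dual: now $\mu - \delta_x$ has one \emph{fewer} point on the $\mu$-side, so I compare $m$ (on $(\mu,\mu')$) with $m'$ (on $(\mu - \delta_x, \mu')$), and the symmetric difference $m \triangle m'$ again decomposes into alternating paths and even cycles. I take $z_0 \in [\mu - \delta_x]$ with $\|z_0 - m(z_0)\| < \|z_0 - m'(z_0)\|$, follow the alternating path starting with edge $\{z_0, m'(z_0)\}$, and propagate the strict preference exactly as above using stability of $m$ and $m'$ alternately. The path must terminate, and the only available endpoints are the removed point $x$ (which appears in $m$ but not $m'$) or an unmatched point; in either case I extract a blocking pair for $m$ and contradict its stability. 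The bookkeeping is cleanest if I phrase the propagation as a lemma: along any $m/m'$-alternating path, if the first vertex strictly prefers its $m$-partner to its $m'$-partner, so does every $\mu$-side vertex on the path.

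The main obstacle I anticipate is justifying \emph{termination} of the alternating path rigorously — since the point configurations are infinite, an alternating path could in principle be infinite, and one must rule this out. For part \eqref{previous} this should follow because the only vertex at which the path can fail to continue as a doubly-matched alternation is $w$; for part \eqref{new}, it is $x$. I would make this precise by noting that every vertex other than the exceptional one has both an $m$-edge and an $m'$-edge (using that the unique stable matchings leave at most one point unmatched on each side, and carefully accounting for those at most two exceptional unmatched points), so a finite alternating path can only end at one of these $O(1)$ special vertices, and an infinite alternating ray would have to revisit a vertex, contradicting the path structure of $m \triangle m'$; in fact $m \triangle m'$ has finite components here because $m$ and $m'$ differ by a single insertion/deletion, so every component is a finite path through the exceptional point. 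Once termination is in hand, the contradiction with \eqref{defstableb} is immediate from the propagation step. An alternative, possibly shorter route for \eqref{del}-style inputs is to cite the two-colour analogue of Lemma \ref{monohk} (Remark \ref{transfer}) to reduce directly to comparing matchings that differ by one matched pair, but the alternating-path argument is self-contained and handles both parts uniformly.
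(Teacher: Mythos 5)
The propagation step of your alternating-path argument is sound: if $z_0\in[\mu]$ strictly prefers $m(z_0)$ to $m'(z_0)$, then stability of $m'$ forces $y_0:=m(z_0)$ to be $m'$-matched to some $z_1$ with $\|y_0-z_1\|<\|y_0-z_0\|$, and stability of $m$ then forces $z_1$ to be $m$-matched with $\|z_1-m(z_1)\|<\|z_1-y_0\|=\|z_1-m'(z_1)\|$, so the strict preference propagates (and if any required partner is absent one gets an immediate blocking pair). The genuine gap is your treatment of termination. It is not true that an infinite alternating ray ``would have to revisit a vertex'' --- infinite simple alternating paths are perfectly possible in infinite configurations --- and the assertion that the components of $m\triangle m'$ are finite ``because $m$ and $m'$ differ by a single insertion/deletion'' is precisely the nontrivial point you would need to prove: a single added or deleted point can a priori trigger an infinite cascade of rematchings, and nothing in your argument rules that out. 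Note also that the path can never reach the exceptional point you hope to terminate at: in part \eqref{previous} every $\mu'$-point visited is $m$-matched, hence is not $w$; in part \eqref{new} every $\mu$-point visited is $m'$-matched, hence is not $x$. The fix --- and the reason it is in the hypotheses --- is the no-descending-chains assumption, which your argument never uses: along the path the consecutive distances are strictly decreasing, $\|z_0-y_0\|>\|y_0-z_1\|>\|z_1-y_1\|>\cdots$, and all these points lie in $[\mu+\mu']$ (the path avoids $w$ and $x$), so an infinite path is exactly a descending chain; a finite path can only stop where a blocking pair appears. Either way you get the contradiction.

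With that repair your proof is correct and self-contained, and it is genuinely different from the paper's: there, part \eqref{previous} is simply quoted from \cite[Lemma 17]{random}, and part \eqref{new} is reduced to part \eqref{previous} by first deleting the matched pair $\{x,m(x)\}$ (via the two-colour analogue of Lemma \ref{monohk} \eqref{del}, Remark \ref{transfer}) and then re-inserting $m(x)$ on the $\mu'$-side. The paper's route is shorter and leans on the earlier literature; yours, once the descending-chain completion replaces the faulty finiteness claim, has the merit of proving both parts from stability alone.
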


\begin{proof}[Proof of Lemma \ref{monotrick}]
Part \eqref{previous} follows from  \cite[Lemma 17]{random}.  For part
\eqref{new}, if $x$ is not matched under $m$, then $m' = m$,  thus assume
that $m(x) = y$.    By Lemma \ref{monohk} \eqref{del} and Remark
\ref{transfer}, $m \setminus \nss{x,y}$ is the unique stable partial matching
for $(\mu - \delta_x, \mu' - \delta_y)$.  Thus by part \eqref{previous},
$m'$, the unique stable matching for $(\mu -\delta_x, \mu')$, satisfies
\eqref{monotwocol}.
\end{proof}

\begin{proof}[Proof of Lemma \ref{twoH}]
The proof for the case when $\gothh{R}$ is ins\-ert\-ion-tolerant is given in
\cite[Theorem 6(i)]{random}.  In the case when $\gothh{R}$ is
deletion-tolerant we proceed similarly to the proof of Lemma \ref{oneH}.
Recall that in the two-colour case, $\M$ denotes the two-colour stable
matching scheme for $\gothh{R}$ and $\gothh{B}$.   Let $Z$ be a
$\gothh{B}$-point. Define $N(\gothh{R}, Z)$ and $\F_1$ as in the proof of
Lemma \ref{oneH}, so that $N(\gothh{R}, Z)$ is the set of $\gothh{R}$-points
that would prefer $Z$ over their partners and $\F_1$ is the point process
with support $N(\gothh{R}, Z)$.

Towards a contradiction assume that $\P( \#N(\gothh{R}, Z) < \infty) >0$.
There exists a  unique stable partial matching for  $(\gothh{R}- \F_1,
\gothh{B})$ a.s.; denote it by $\M'$.  From Lemma \ref{monotrick}
\eqref{new}, it follows that
\begin{equation}
\label{newNR}
\P( N(\gothh{R} - \F_1, Z) = 0) >0.
\end{equation}
From \eqref{newNR} and Remark \ref{transfer} with Lemma \ref{monohk}
\eqref{Ndel}, it follows that with positive probability,   $\M' \setminus
\ns{\ns{Z, \M'(Z)}}$ is the unique stable partial matching for $(\gothh{R} -
\F_1 - \M'(Z), \gothh{B})$ and the $\gothh{B}$-point $Z$ is left unmatched.
By Lemma~\ref{unipick}, there exists a bounded Borel set $S$ with finite Lebesgue
measure such that $\P(\gothh{R} \ronnn_S = \F_1 + \delta_{\M'(Z)}) > 0$.  By
Theorem \ref{equiv} \eqref{S} and the deletion-tolerance of $\gothh{R}$, we
have that $\gothh{R} \ronnn_{S^c} \prec \gothh{R}$; furthermore, since
$\gothh{R}$ and $\gothh{B}$ are independent, $(\gothh{R} \ronnn_{S^c},
\gothh{B}) \prec (\gothh{R}, \gothh{B})$.  Thus with positive probability
$(\gothh{R}, \gothh{B})$ has a stable partial matching with a unmatched
$\gothh{B}$-point. This contradicts the fact that $\M$ is the two-colour
matching scheme for $\gothh{R}$ and  $\gothh{B}$.
\end{proof}

\section{Perturbed lattices and Gaussian zeros}
\label{perproof}

\subsection{Low-fluctuation processes}

\sloppypar Propositions \ref{pertone} and \ref{GAFplane}  will be proved
using the following more general result, which states that processes
satisfying various ``low-fluctuation'' conditions are neither insertion-tolerant
nor deletion-tolerant.   For a point process $\Pi$ and a measurable function
$h:\R^d \to \R$ write
\begin{equation}
\Pi(h):= \int  h(x) d\Pi(x) =\sum_{x \in [\Pi]} h(x).
\end{equation}
Let $\overline{B}(0,1) := \ns{x\in \R^d : \| x\| \leq 1}$ denote the closed unit ball.
\begin{proposition}[Low-fluctuation processes]
\label{decayo} Let $\Pi$ be a point process on $\R^d$ with finite intensity.
Let $h: \R^d \to [0,1]$ be a measurable function with $h(x) = 1$ for all $x
\in B(0,1/2)$ and support in $\overline{B}(0,1)$.  For each $n \in \Z^{+}$, set $h_n(x)
:= h(x/n)$ for all $x \in \R^d$. \enlargethispage*{5mm}
\begin{enumerate}[(i)]
\item
\label{ch-decaya}
If $\Pi(h_n) - \E \Pi(h_n)   \to 0$ in probability as $n \to \infty$,
then $\Pi$ is neither insertion-tolerant nor deletion-tolerant.
\item
\label{orseq}
If there exists a deterministic sequence $(n_k)$ with $n_k \to \infty$ such that
\begin{equation}
\label{ceas}
\frac{1}{K}\sum_{k=1} ^K
\big( \Pi(h_{n_k}) - \E \Pi (h_{n_k})  \big) \prob  0 \quad\text{as }K\to\infty,
\end{equation} \enlargethispage*{5mm}
then $\Pi$ is neither insertion-tolerant nor deletion-tolerant.
\item \label{ch-decayb} Write $N_n = \Pi(h_n) - \E \Pi(h_n)$. If there
    exists a deterministic sequence $(n_k)$ with $n_k \to \infty$ and a
    discrete real-valued random variable $N$ such that for all $\ell \in
    \R$,
\begin{equation}
\label{ceastwo}
\frac{1}{K}\sum_{k=1} ^K \mathbf{1}[N_{n_k} \leq \ell] \
\stackrel{{\P}}{\to} \  \P(N \leq\ell)   \quad  \text{as }   K \to \infty,
\end{equation}
then $\Pi$ is neither insertion-tolerant nor deletion-tolerant.
\end{enumerate}
\end{proposition}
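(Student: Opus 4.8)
The plan is to derive all three parts from a single principle and then instantiate it once for insertion and once for deletion. Suppose $\Xi$ is a point process defined on the same probability space as $\Pi$ with $\Xi \prec \Pi$, and suppose $(\phi_K)_{K\ge 1}$ is a deterministic sequence of $\goth{M}$-measurable functionals $\XX \to \R$ with $\phi_K(\Pi)\prob a$ and $\phi_K(\Xi)\prob b$ for some constants $a\ne b$. I claim this is impossible. Indeed, pass to a subsequence $(K_m)$ along which $\phi_{K_m}(\Pi)\to a$ almost surely, and then to a further subsequence along which also $\phi_{K_m}(\Xi)\to b$ almost surely (the convergence for $\Pi$ being inherited). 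With $\G := \{\mu\in\XX : \lim_m \phi_{K_m}(\mu)=a\}\in\goth{M}$ we then have $\P(\Pi\in\G^c)=0$ but $\P(\Xi\in\G^c)=1>0$, contradicting $\Xi\prec\Pi$. I would apply this with $\Xi = \Pi+\delta_U$, where $U$ is uniform on $B(0,1)$ and independent of $\Pi$ — a contradiction here refutes insertion-tolerance, by the definition applied with $S=B(0,1)$ — and with $\Xi = \Pi-\delta_Z$ for a $\Pi$-point $Z$, which refutes deletion-tolerance. (When $[\Pi]=\emptyset$ almost surely there is no such $Z$ and nothing to prove for deletion-tolerance.)

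The elementary input is that, since $h\equiv 1$ on $B(0,1/2)$ and $h_n(x)=h(x/n)$, we have $h_n(x)=1$ whenever $\|x\|\le n/2$. Hence $(\mu+\delta_u)(h_n)=\mu(h_n)+1$ for every $u\in B(0,1)$ and $n\ge 2$, and $(\mu-\delta_z)(h_n)=\mu(h_n)-1$ for every $z\in[\mu]$ and $n\ge 2\|z\|$. Because $N_n = \Pi(h_n)-\E\Pi(h_n)$ is formed by subtracting the \emph{fixed} constant $\E\Pi(h_n)$ (finite, since $h_n$ has bounded support and $\Pi$ has finite intensity), and $\mu\mapsto\mu(h_n)$ is $\goth{M}$-measurable, we obtain $N_n(\Pi+\delta_U)=N_n(\Pi)+1$ for all $n\ge 2$, and $N_n(\Pi-\delta_Z)=N_n(\Pi)-h_n(Z)$ with $h_n(Z)\to 1$ almost surely as $n\to\infty$ (using $\|Z\|<\infty$ a.s.).

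Now I would run the principle three times. For part (i), take $\phi_K := N_K$: the hypothesis gives $\phi_K(\Pi)\prob 0$, while $\phi_K(\Pi+\delta_U)\prob 1$ and $\phi_K(\Pi-\delta_Z)\prob -1$, so the principle applies with $(a,b)=(0,1)$ and $(0,-1)$. For part (ii), take $\phi_K := \frac1K\sum_{k=1}^K N_{n_k}$ for the given subsequence $(n_k)$: by \eqref{ceas}, $\phi_K(\Pi)\prob 0$; since $n_k\to\infty$, $h_{n_k}(u)=1$ for every $u\in B(0,1)$ once $k$ is large, so $\phi_K(\Pi+\delta_U)=\phi_K(\Pi)+\frac1K\sum_{k=1}^K h_{n_k}(U)\prob 1$, and similarly $\phi_K(\Pi-\delta_Z)=\phi_K(\Pi)-\frac1K\sum_{k=1}^K h_{n_k}(Z)\prob -1$. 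For part (iii), the discrete variable $N$ has an atom $a_0$ with $\P(N=a_0)>0$; for the insertion companion take $\phi_K := \frac1K\sum_{k=1}^K\mathbf{1}[N_{n_k}\le a_0]$, so that $\phi_K(\Pi)\prob \P(N\le a_0)$ by \eqref{ceastwo}, while (using $\mathbf{1}[N_{n_k}(\Pi+\delta_U)\le a_0]=\mathbf{1}[N_{n_k}(\Pi)\le a_0-1]$ for $k$ large) $\phi_K(\Pi+\delta_U)\prob \P(N\le a_0-1)$, again by \eqref{ceastwo}; these differ by $\P(a_0-1<N\le a_0)\ge \P(N=a_0)>0$. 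For the deletion companion, use instead $\phi_K := \frac1K\sum_{k=1}^K\mathbf{1}[N_{n_k}\le a_0-1]$ together with $\mathbf{1}[N_{n_k}(\Pi-\delta_Z)\le a_0-1]=\mathbf{1}[N_{n_k}(\Pi)\le a_0]$ for $k$ large, which yields limits $\P(N\le a_0-1)$ and $\P(N\le a_0)$, again distinct.

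I expect the only genuinely delicate step to be the principle's extraction of a single subsequence along which both $\phi_{K_m}(\Pi)$ and $\phi_{K_m}(\Xi)$ converge almost surely: this is legitimate precisely because $\Pi$ and $\Xi=\Pi\pm(\text{point})$ are coupled on one probability space, so one subsequence can be chosen fast enough for both (via Borel--Cantelli). Everything else is the bookkeeping above, together with the trivial observation that inserting or deleting a point inside a ball on which $h_n\equiv 1$ shifts $\mu(h_n)$ by exactly $\pm 1$ while leaving the centering constant $\E\Pi(h_n)$ untouched.
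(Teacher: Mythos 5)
Your proposal is correct and takes essentially the same route as the paper: there too one passes to a subsequence along which the centered functional of $\Pi$ converges a.s., observes that inserting a uniform point of $B(0,1)$ (or deleting a $\Pi$-point) shifts $\Pi(h_n)-\E\Pi(h_n)$ by $+1$ (or, eventually, $-1$) while the centering constant is unchanged, and contradicts absolute continuity via the resulting measurable event. Your only departures are organizational: packaging the contradiction as a single two-subsequence principle applied to each case, and in part (iii) comparing the two thresholds $a_0-1$ and $a_0$ at an atom of $N$ instead of the paper's diagonal argument over all levels $\ell$ followed by choosing $a$ with $\P(N\le a)\neq\P(N\le a+1)$.
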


In our application of Proposition \ref{decayo} \eqref{ch-decayb}, $N_n$ will
be integer-valued (see \eqref{integervalued} below).

\begin{proof}[Proof of Proposition \ref{decayo} \eqref{ch-decaya}]
Let $m_n:= \E \Pi(h_n)$.  Since $\Pi(h_n) - m_n \to 0$ in probability, there
exists a (deterministic) subsequence ${n_k}$ such that $\Pi(h_{n_k}) -
m_{n_k} \to 0$ a.s.  On the other hand, if $U$ is uniformly distributed in
$B(0,1)$, then $(\Pi + \delta_U)(h_{n_k}) - m_{n_k} \to 1$ a.s.  Therefore
$\Pi$ is not insertion-tolerant.  Similarly, if $Z$ any $\Pi$-point,
then $(\Pi - \delta_Z)(h_{n_k})-m_{n_k} \to -1$.  So $\Pi$ is not
deletion-tolerant.
\end{proof}

\begin{proof}[Proof of Proposition \ref{decayo} \eqref{orseq}]
  Suppose that \eqref{ceas} holds for some deterministic sequence $(n_k)$. Let
$m_{n_k}:= \E \Pi(h_{n_k})$, and for each integer $K > 0$ define $S_K:\XX \to \R$ by
$$S_K(\mu):=\frac{1}{K}\sum_{k=1}^K   (\mu(h_{n_k}) - m_{n_k}).$$
Thus $S_K(\Pi) \to 0$ in probability as $K \to \infty$, and there exists a
subsequence $(K_i)$ so that $S_{K_i}(\Pi) \to 0$ a.s.
However, if  $U$ is uniformly distributed in  $B(0,1)$, then $S_{K_i}(\Pi +
\delta_U)\to 1$ a.s.  Thus $\Pi$ cannot be insertion-tolerant.  Similarly, if
$Z$ is a $\Pi$-point, then $S_{K_i}(\Pi  - \delta_Z) \to -1$ a.s. Thus $\Pi$
cannot be deletion-tolerant.
\end{proof}

\begin{proof}[Proof of Proposition \ref{decayo} \eqref{ch-decayb}]
Suppose that \eqref{ceastwo} holds for some deterministic sequence $(n_k)$
and some discrete random variable $N$.  Let $m_{n_k}:= \E \Pi(h_{n_k})$, and
let $N_{n_k}(\mu) := \mu(h_n) - m_{n_k}$ for all $\mu \in \XX$. For each
integer $K>0$, define $F_K: \XX \times \R \to [0,1]$ by
$$F_K(\mu, \ell) := \frac{1}{K}\sum_{k=1} ^K \mathbf{1}[N_{n_k}(\mu) \leq \ell].$$
Thus $F_K(\Pi, \ell) \to \P(N \leq \ell)$ in probability as $K \to \infty$
for all $\ell \in \R$.   Since $N$ is discrete and has countable support, by
a standard diagonal argument, there exists a subsequence $(K_i)$ so that
$F_{K_i}(\Pi, \ell) \to \P(N \leq \ell)$ a.s.\ for all $\ell \in \R$.   Fix
$a \in \R$ such that $\P(N \leq a) \not = \P(N \leq  a+1)$.    We have
$F_{K_i}(\Pi, a) \to \P(N \leq a)$ a.s.\ and $F_{K_i}(\Pi, a+1) \to \P(N \leq
a+1) $ a.s.  However,   if  $U$ is uniformly distributed in  $B(0,1)$, then
$F_{K_i}(\Pi + \delta_U, a+1)\to \P(N \leq a)$ a.s.  Thus $\Pi$ cannot be
insertion-tolerant.     Similarly, if $Z$ is a $\Pi$-point, then $F_{K_i}(\Pi
- \delta_Z, a) \to \P(N \leq  a+1)$ a.s. Thus $\Pi$ cannot be
deletion-tolerant.
\end{proof}

\subsection{Gaussian zeros in the plane}

\begin{proof}[Proof of Proposition \ref{GAFplane}]
Let $\GZF$ be the Gaussian zero process on the plane.  Sodin and Tsirelson
\cite[Equation (0.6)]{MR2121537} show that $\GZF$ satisfies the conditions of
Proposition \ref{decayo} \eqref{ch-decaya}, with a twice differentiable
function $h$; in particular they show that $\var \GZF(h_n) \to 0$ as $n \to
\infty$.   Hence $\GZF$ is neither insertion-tolerant nor deletion-tolerant.
\end{proof}

\subsection{Perturbed lattices in dimension $2$}

The proof of Proposition~\ref{pertone} for the case $d=2$ relies on the
following lemma.

\begin{lemma}
\label{masterlemma} Let $(Y_z:z\in\Z^2)$ be i.i.d.\ $\R^2$-valued random
variables with $\E Y_0=0$ and $\var \|Y_0\|=\sigma^2<\infty$.  Let $\Lambda$
be the point process given by $[\Lambda]:= \ns{z + Y_z: z \in \Z^2}$.  Let
$h:\R^2\to[0,1]$ have support in $B(0,1)$, and have Lipschitz constant at
most $c<\infty$, and let  $h(x) =1$ for all $x \in B(0,1/2)$.  Define
$h_r(x):=h(x/r)$ for $x\in\R^2$ and $r>0$.  Set $m_r:= \E  \Lambda(h_{r})$.
\begin{enumerate}[(i)]
\item
\label{finitevar}
For all $r>0$ we have $\var \Lambda(h_r) \leq C,$
for some $C=C(\sigma^2,c)<\infty.$

\item
\label{covdecay}
For all $r >0$, we have
$ \cov(\Lambda(h_r), \Lambda(h_R)) \to 0$ as $R \to \infty$.

\item \label{orseqtwo} There exists a deterministic sequence $(n_k)$ with
    $n_k \to \infty$ such that \eqref{ceas} is satisfied with $\Lambda$
    in place of $\Pi$; that is,
    $$\frac{1}{K}\sum_{k=1} ^K
\big( \Lambda(h_{n_k}) - \E \Lambda (h_{n_k})  \big) \prob  0 \quad\text{as }K\to\infty.$$
\end{enumerate}
\end{lemma}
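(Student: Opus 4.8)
The three parts build on each other, and the plan is to prove them in order, since \eqref{orseqtwo} will be an easy consequence of \eqref{finitevar} and \eqref{covdecay} via a Cesàro/second-moment argument. For part \eqref{finitevar}, I would write $\Lambda(h_r) = \sum_{z\in\Z^2} h_r(z+Y_z)$, so that $\Lambda(h_r)$ is a sum of \emph{independent} (though not identically distributed) terms indexed by $z$. Hence $\var \Lambda(h_r) = \sum_z \var\big(h_r(z+Y_z)\big) \le \sum_z \E\big[h_r(z+Y_z)^2\big] \le \sum_z \P(z + Y_0 \in B(0,r))$, using $0 \le h \le 1$ and $\mathrm{supp}\,h_r \subseteq B(0,r)$. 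Now for the terms with $\|z\|$ of order $r$ or larger, I would exploit that $h_r$ is supported in $B(0,r)$ and has Lipschitz constant $c/r$, so $h_r(z+Y_z)$ is nonzero only when $Y_z$ moves $z$ a distance $\ge \|z\| - r$ toward the origin; Chebyshev's inequality on $\|Y_0\|$ (whose variance is $\sigma^2$, and whose mean is finite since $\E Y_0 = 0$ forces $\E\|Y_0\| < \infty$... more carefully, $\var\|Y_0\|<\infty$ gives $\E\|Y_0\|^2<\infty$) gives $\P(\|Y_0\| \ge t) \le C'/t^2$. Summing $\sum_{z} \min(1, C'/(\|z\|-r)^2)$ over $z\in\Z^2$: the $O(r^2)$ lattice points within distance $\approx 2r$ of the origin each contribute at most $1$, giving $O(r^2)$; but this must be offset. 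The key realization is that I should instead bound $\var\big(h_r(z+Y_z)\big)$ more cleverly for the "bulk" points. For $z$ with $B(z,r/2)$... actually when $z+Y_z$ is deep inside $B(0,r/2)$ we have $h_r \equiv 1$ there, contributing zero variance; so the variance comes only from the "annulus" $z$ near $\partial B(0,r)$ and $\partial B(0,r/2)$ (width $\sim 1$ in rescaled coordinates, but the perturbation has a fixed-size tail). I expect this careful decomposition — splitting into the boundary-layer contribution (finitely many effective lattice spacings, times the fixed perturbation law, summing to $O(1)$ after using $\E\|Y_0\|^2 < \infty$) versus the far-field tail (geometric/polynomial decay, summable) — to be the \emph{main obstacle}, and it is the heart of the $d=2$ case.

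For part \eqref{covdecay}, with $r$ fixed and $R\to\infty$, write $\cov(\Lambda(h_r),\Lambda(h_R)) = \sum_z \cov\big(h_r(z+Y_z), h_R(z+Y_z)\big)$ (again using independence across $z$). Each summand is bounded by $\big(\var h_r(z+Y_z)\big)^{1/2}\big(\var h_R(z+Y_z)\big)^{1/2} \le \big(\P(z+Y_0\in B(0,r))\big)^{1/2}$, and for $\|z\|$ large this is summable and uniformly small; meanwhile for each fixed $z$ the summand tends to $0$ as $R\to\infty$ because $h_R(z+Y_z) \to 1$ a.s.\ (once $R$ exceeds $\|z\| + \|Y_z\|$), so $\var h_R(z+Y_z) \to 0$. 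Dominated convergence on the sum over $z$ then yields $\cov(\Lambda(h_r),\Lambda(h_R)) \to 0$.

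For part \eqref{orseqtwo}, set $A_K := \frac{1}{K}\sum_{k=1}^K \big(\Lambda(h_{n_k}) - m_{n_k}\big)$ for a sequence $(n_k)$ to be chosen, and compute
$$
\var A_K = \frac{1}{K^2}\sum_{j,k=1}^K \cov\big(\Lambda(h_{n_j}), \Lambda(h_{n_k})\big).
$$
By part \eqref{finitevar} the diagonal terms are $O(K)$, contributing $O(1/K)\to 0$. For the off-diagonal terms, part \eqref{covdecay} lets me choose $(n_k)$ inductively growing fast enough that $\cov(\Lambda(h_{n_j}),\Lambda(h_{n_k})) \le 2^{-|j-k|}$ (say) for $j\ne k$; then the off-diagonal sum is $O(K)$ as well, so $\var A_K = O(1/K) \to 0$, whence $A_K \prob 0$. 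This is exactly \eqref{ceas} with $\Lambda$ in place of $\Pi$, which is the assertion. Combining with Proposition \ref{decayo} \eqref{orseq} then proves Proposition \ref{pertone} for $d=2$, once one checks the centering hypothesis $\E Y_0 = 0$ is harmless (translating the perturbation law only shifts the lattice, preserving the conclusion, and general $\E\|Y_0\|^2<\infty$... actually the standing hypothesis of Proposition \ref{pertone} is merely $\E\|Y_0\|^d<\infty$, so a separate truncation argument reducing to the finite-variance case handled here will also be needed — but that reduction is routine).
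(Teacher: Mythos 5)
Your reduction to $\var \Lambda(h_r)=\sum_{z\in\Z^2}\var h_r(z+Y_z)$ and your overall architecture (prove \eqref{finitevar}, \eqref{covdecay}, then get \eqref{orseqtwo} by a second-moment/Chebyshev argument along a rapidly growing subsequence) match the paper, and your part \eqref{orseqtwo} is correct as written — it is a streamlined version of the paper's route, which instead invokes a general weak law for dependent sequences with an inductively chosen subsequence. The genuine gap is in part \eqref{finitevar}, and you flag it yourself without resolving it. Every bound you propose for the individual summands is a tail bound on $\|Y_0\|$, and such bounds cannot give $O(1)$: for the $\Theta(r^2)$ lattice points in the annulus $r/2<\|z\|<r$, where $h_r$ is genuinely non-constant, tail bounds give per-point variance of order $1$; even for $z$ deep inside $B(0,r/2)$ (where you claim ``zero variance'' — false, since $Y_z$ is unbounded) the bound $\var h_r(z+Y_z)\le \P(\|Y_0\|\ge r/2-\|z\|)$ only sums to order $r$, since there are about $r$ lattice points at each distance $s$ from $\partial B(0,r/2)$, each contributing $\min(1,\sigma^2/s^2)$. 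The missing idea — the one the proof actually turns on — is to center each bulk term at the \emph{deterministic} value $h_r(z)$ and use that $h_r$ has Lipschitz constant $c/r$: for all $\|z\|\le 2r$, $\var h_r(z+Y_z)\le \E[(h_r(z+Y_z)-h_r(z))^2]\le (c/r)^2\,\E\|Y_z\|^2$, so the $O(r^2)$ bulk terms contribute $O(c^2\sigma^2)$ in total. Your far-field treatment also fails in $d=2$: summing the pointwise Chebyshev bound $C'/(\|z\|-r)^2$ over $\|z\|>2r$ diverges logarithmically; the correct bound exploits bounded overlap of the balls, $\sum_{\|z\|>2r}\P(Y_0\in B(-z,r))=\E\#\ns{z:\|z\|>2r,\ Y_0\in B(-z,r)}\le C r^2\,\P(\|Y_0\|>r)\le C\sigma^2$.

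In part \eqref{covdecay} your skeleton (termwise convergence plus domination) is sound, but the dominating bound $\sqrt{\P(z+Y_0\in B(0,r))}$ from Cauchy--Schwarz need not be summable over $z\in\Z^2$ under a second-moment hypothesis alone (tails like $t^{-2}(\log t)^{-2}$ with mass spread over annuli give a divergent sum). Drop the square root: $|\cov(h_r(z+Y_z),h_R(z+Y_z))|\le 2\,\E h_r(z+Y_z)\le 2\,\P(z+Y_0\in B(0,r))$, whose sum over $z$ is at most $Cr^2$ by the same overlap count, and dominated convergence finishes; the paper instead takes $R>2r$, notes $h_r>0$ forces $h_R=1$ so the cross term is exactly $m_r$, and applies monotone convergence. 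Finally, your closing remark about needing a truncation is unnecessary: for $d=2$ the hypothesis $\E\|Y_0\|^2<\infty$ of Proposition \ref{pertone} is precisely the finite-second-moment condition used in the lemma, and centering $\E Y_0=0$ is indeed harmless.
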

Lemma \ref{masterlemma} parts \eqref{finitevar} and \eqref{covdecay} will
allow us to use a weak law of large numbers to  prove \eqref{orseqtwo}.

\begin{proof}[Proof of Proposition \ref{pertone} ($d=2$)]
We may clearly assume without loss of generality that $\E Y_0=0$.  Now apply
Lemma \ref{masterlemma} \eqref{orseqtwo} together with Proposition
\ref{decayo} \eqref{orseq}.
\end{proof}

\begin{proof}[Proof of Lemma \ref{masterlemma} \eqref{finitevar}]
Note that
\begin{equation}
\label{form}
\Lambda(h_r) =  \sum_{z\in\Z^2} h_r(z+Y_z).
\end{equation}
Thus by the independence of the $Y_z$,  we have
\begin{equation}
\label{sumone}
\var \Lambda(h_r) = \sum_{z\in\Z^2} \var h_r(z+Y_z);
\end{equation}
we will split this sum into two parts.  We write
$C_1,C_2$ for constants depending only on $\sigma^2$ and $c$.

Firstly, since $h_r$ has Lipschitz constant at most $c/r$, we
have for all $z\in \Z^2$,
$$\var h_r (z+Y_z)\leq \E [(h_r (z+Y_z)-h_r (z))^2]\leq \E[(c\|Y_z\|/r)^2]=(c\sigma/r)^2,$$
therefore
\begin{equation}
\label{sumtwo}
\sum_{z\in\Z^2:\\ \|z\|\leq 2r} \var h_r(z+Y_z)\leq C_1.
\end{equation}
Secondly, since $h_r$ has support in $B(0,r)$,
\begin{align*}
\var h_r(z+Y_z) &\leq \E [h_r(z+Y_z)^2] \\ &\leq \P[z+Y_z\in B(0,r)] = \P[Y_0\in B(-z,r)],
\end{align*}
therefore
\begin{align}
\label{sumthree}
\sum_{{z\in\Z^2: \|z\|> 2r}} \var h_r(z+Y_z) &\leq
\sum_{{z\in\Z^2: \|z\|> 2r}} \P[Y_0\in B(-z,r)] \nonumber \\ &\leq
C_2 r^2 \P(\|Y_0\|>r)\leq C_2 \sigma^2.
\end{align}
 The result now follows by combining \eqref{sumone}--\eqref{sumthree}.
\end{proof}

\begin{proof}[Proof of Lemma \ref{masterlemma} \eqref{covdecay}]
Note that by Lemma \ref{masterlemma} \eqref{finitevar}, for all $r, R > 0$,
we have that  $\cov(\Lambda(h_r), \Lambda(h_R)) < \infty.$  By \eqref{form}
and independence of the $Y_z$ we have
\begin{align*}
\cov(\Lambda(h_r), \Lambda(h_R)) =&\,
\E \Big(\sum_{z \in \Z^2}  h_r(z+Y_z) \; h_R(z + Y_z)\Big)
\\ &-\sum_{z \in \Z^2} \E h_r(z+Y_z) \;\E h_R(z + Y_z).
\end{align*}
Let $R > 2r$.   If $h_r(z+ Y_z) >0$, then $h_R(z + Y_z) =1$; thus
$$\cov(\Lambda(h_r), \Lambda(h_R)) =
m_r - \sum_{z \in \Z^2} \E h_r(z+Y_z) \;\E h_R(z + Y_z).$$
Since $ h_R \uparrow  1$ as $R \to \infty$, for each $z \in \Z^2$  we have by
the monotone convergence theorem that $\E h_R(z + Y_z) \uparrow 1$ as $R \to
\infty$.  An additional application of the monotone convergence theorem shows
that
\begin{equation*}
\lim_{R \to \infty} \sum_{z \in \Z^2} \E h_r(z+Y_z) \; \E h_R(z + Y_z)
=  \sum_{z \in \Z^2} \E h_r(z+Y_z) =m_r.
\qedhere
\end{equation*}
\end{proof}

We will employ the  following weak law of large numbers for dependent
sequences to prove Lemma \ref{masterlemma} \eqref{done}.

\begin{lemma}
\label{durrett} Let $Z_1, Z_2, \ldots$ be real-valued random variables with
finite second moments and zero means.  If   there exists a sequence $b(k)$
with $b(k) \to 0$ as $k \to \infty$ such that $\E (Z_n Z_m) \leq b(n-m)$ for
all $n \geq m$, then $(Z_1 + \cdots + Z_n)/n \to 0$ in probability as $n \to
\infty$.
\end{lemma}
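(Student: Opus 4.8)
The plan is to estimate the partial sums $S_n := Z_1 + \cdots + Z_n$ via their second moments and conclude by Chebyshev's inequality. Since each $Z_i$ has mean zero, $\var(S_n/n) = \E[(S_n/n)^2]$, so it suffices to show $\E[S_n^2] = o(n^2)$: then for every $\e > 0$, Chebyshev gives $\P(|S_n/n| > \e) \le \E[S_n^2]/(\e^2 n^2) \to 0$, which is exactly $(Z_1 + \cdots + Z_n)/n \prob 0$. Before starting I would make the harmless reduction to the case $b \ge 0$: replacing $b(k)$ by $\max(b(k), 0)$ keeps both the hypothesis $\E(Z_n Z_m) \le b(n-m)$ and the convergence $b(k) \to 0$ intact.

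The core step is the bound
$$\E[S_n^2] = \sum_{i=1}^{n}\sum_{j=1}^{n} \E(Z_i Z_j) \le \sum_{i=1}^{n}\sum_{j=1}^{n} b(|i-j|),$$
where for $i \ge j$ this is the hypothesis and for $i < j$ it follows since $\E(Z_i Z_j)$ is symmetric in $i,j$. Collecting terms by the common difference $k = |i-j|$ (there are $n$ index pairs with $k = 0$ and $2(n-k)$ with $1 \le k \le n-1$) turns this into
$$\E[S_n^2] \le n\, b(0) + 2 \sum_{k=1}^{n-1} (n-k)\, b(k).$$

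Finally I would check that the right-hand side is $o(n^2)$. The term $n\,b(0)$ is trivially negligible. For the sum, given $\e > 0$ choose $N$ with $b(k) < \e$ for all $k \ge N$; splitting at $N$ and using $(n-k) \le n$ on the first piece yields $\tfrac{2}{n^2}\sum_{k=1}^{n-1}(n-k)b(k) \le \tfrac{2}{n}\sum_{k=1}^{N-1} b(k) + \e$, whose $\limsup$ as $n \to \infty$ is at most $\e$; since $\e$ was arbitrary this proves $\E[S_n^2]/n^2 \to 0$. None of this presents a real obstacle — it is essentially the classical $L^2$ weak law argument; the only mild subtleties are that $b$ is assumed only as an upper bound (dealt with by the reduction to $b \ge 0$) and the Cesàro-style estimate for the weighted sum $\sum_k (n-k)\, b(k)$.
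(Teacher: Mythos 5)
Your proof is correct and is exactly the standard second-moment/Chebyshev argument that the paper invokes: the authors do not prove Lemma \ref{durrett} themselves but cite it as a straightforward generalization of the $L^2$ weak law in Durrett, which is precisely your bound $\E[S_n^2]\le n\,b(0)+2\sum_{k=1}^{n-1}(n-k)b(k)=o(n^2)$. Your reduction to $b\ge 0$ and the observation that only an upper bound on $\E(Z_nZ_m)$ is needed (since $\E[S_n^2]\ge 0$ automatically) are handled correctly, so nothing is missing.
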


Lemma \ref{durrett} is a straightforward generalization of the standard $L^2$
weak law.  See \cite[Chapter 1, Theorem 5.2 and Exercise 5.2]{MR1609153}.

\begin{corollary}
\label{durrettm} Let $Z_1, Z_2, \ldots$ be real-valued random variables with
finite second moments and zero means.   Suppose that there exists $C >0$,
such that $\E |Z_m|^2 \leq C$ for all $m \in \Z^{+}$.    If for all $m \in
\Z^{+}$ we have  $\E(Z_mZ_n) \to 0$ as $n \to \infty$, then there exists an
increasing sequence of positive of integers $(r_n)$ such that $(Z_{r_1} +
\cdots+ Z_{r_n})/n \to 0$ in probability as $n \to \infty$.   Furthermore,
for any further subsequence $(r_{n_k})$ we have $(Z_{r_{n_1}} + \cdots
+Z_{r_{n_k}})/k \to 0$ in probability as $n \to \infty$.
\end{corollary}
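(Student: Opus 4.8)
The plan is to extract a subsequence of $(Z_n)$ along which the covariances decay geometrically, in a way that is inherited by every further subsequence, and then to apply Lemma~\ref{durrett} twice.

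First I would build $(r_n)$ greedily. Put $r_1 := 1$, and having chosen $r_1 < \cdots < r_{j-1}$, use the hypothesis $\E(Z_{r_i} Z_n) \to 0$ as $n \to \infty$, for each of the finitely many indices $i < j$, to choose an integer $r_j > r_{j-1}$ so large that $|\E(Z_{r_i} Z_{r_j})| \leq 2^{-j}$ for all $i < j$. The key feature of this construction is that the bound on $|\E(Z_{r_i} Z_{r_j})|$ is governed by the \emph{larger} index $j$, so it is unaffected by discarding intermediate terms.

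Next I would verify the hypotheses of Lemma~\ref{durrett} for the relabelled sequence $W_n := Z_{r_n}$. The $W_n$ have zero mean and finite second moment, $\E W_n^2 \leq C$, and for $n > m \geq 1$ we have $|\E(W_n W_m)| \leq 2^{-n} \leq 2^{-(n-m)}$. So setting $b(0) := C$ and $b(k) := 2^{-k}$ for $k \geq 1$ gives $\E(W_n W_m) \leq b(n-m)$ for all $n \geq m$ with $b(k) \to 0$, and Lemma~\ref{durrett} yields $(Z_{r_1} + \cdots + Z_{r_n})/n \to 0$ in probability.

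Finally, for a further subsequence $(r_{n_k})$, note that $n_k \geq k$ since $(n_k)$ is a strictly increasing sequence of positive integers, so for $l > k$ the construction gives $|\E(Z_{r_{n_k}} Z_{r_{n_l}})| \leq 2^{-n_l} \leq 2^{-l} \leq 2^{-(l-k)}$, while $\E Z_{r_{n_k}}^2 \leq C$. Hence Lemma~\ref{durrett} applies once more, now to $Z_{r_{n_1}}, Z_{r_{n_2}}, \ldots$, giving $(Z_{r_{n_1}} + \cdots + Z_{r_{n_k}})/k \to 0$ in probability. There is no serious obstacle here; the only point needing care is the greedy construction, which succeeds because only finitely many covariance conditions are imposed at each stage, and the choice of the threshold $2^{-j}$ depending on the larger index alone is precisely what makes the conclusion robust under passing to a subsequence.
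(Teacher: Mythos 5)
Your proposal is correct and follows essentially the same route as the paper: a greedy inductive extraction of $(r_n)$ in which the covariance bound is governed by the larger index (the paper uses the threshold $1/n$ where you use $2^{-j}$, an immaterial difference), followed by an application of Lemma~\ref{durrett} with $b(0)=C$, and the same observation that $n_k \geq k$ makes the bound, and hence the conclusion, stable under passing to any further subsequence.
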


\begin{proof}
Consider the sequence $b(k):=1/k$, where we set $b(0) = C$. We will show that
there exists a sequence $r_k$ so that  $\E( Z_{r_n} Z_{r_m}) \leq 1/n$ for
all $n >m$.  Thus $Z_{r_k}$ satisfies the conditions of Lemma \ref{durrett}
with $b(k)$. We proceed by induction. Set $r_1 = 1$.  Suppose that $r_2,
\ldots, r_{k-1}$ have already been defined and satisfy $\E(Z_{r_n} Z_{r_m})
\leq 1/n$ for all $1 \leq m < n \leq k-1$.  It follows from Lemma
\ref{masterlemma} \eqref{covdecay} that there exists an integer $R > 0$ such
that $\E(Z_{r_m}Z_R) \leq 1/k$ for all $1 \leq m \leq k-1$; set $r_k:= R$.
Furthermore, if $(r_{n_k})$ is a subsequence of $(r_n)$, we have that if $m
<k$, then $\E(Z_{r_{n_m}} Z_{r_{n_k}}) \leq \fracc{1}{n_k} \leq
\fracc{1}{k}.$   Thus $Z_{r_{n_k}}$ satisfies the conditions of Lemma
\ref{durrett} with $b(k)$.
\end{proof}

\begin{proof}[Proof of Lemma \ref{masterlemma} \eqref{orseqtwo}]
For each $n \in \Z^{+}$, set $Z_{n} := \Lambda(h_{n}) - m_{n}$.  By  Lemma
\ref{masterlemma} parts \eqref{finitevar} and \eqref{covdecay}, $Z_n$
satisfies the conditions of Corollary~\ref{durrettm}.
\end{proof}

\subsection{Perturbed lattices in dimension $1$}

The proof of Proposition~\ref{pertone} for the case $d=1$ relies on the
following lemma.

\begin{lemma}
\label{mlemma} Let $(Y_z: z \in \Z)$ be i.i.d.\ $\R$-valued random variables.
Let $\Lambda$ be the point process given by $[\Lambda]:= \ns{z + Y_z: z\in
\Z}$.   Define $h(x) := \one{(-1, 1]}(x)$ for all $x \in \R$ and set $h_n(x)
:= h(x/n)$ for $x \in \R$ and $n \in \Z^{+}$. For each $n \in \Z^{+}$, let
$N_n := \Lambda(h_n) - \E \Lambda(h_n).$        Assume that $\E |Y_0| <
\infty$.
\begin{enumerate}[(i)]
\item
\label{tight} The family of random variables
$(N_n)_{n\in \Z^+}$ is tight and integer-valued.
\item \label{cov} For any $k,\ell \in \R$ and $a \in \Z^{+}$,
$$\P(N_a \leq k, N_n \leq \ell) - \P(N_a \leq k) \;
\P(N_n \leq \ell) \to 0 \quad\text{as} \ n\to\infty.$$
\item \label{done} There exists a deterministic sequence $(n_k)$ with
    $n_k \to \infty$ and an integer-valued random variable $N$ such that
    \eqref{ceastwo} is satisfied; that is, for all $\ell \in \R$,
$$\frac{1}{K}\sum_{k=1} ^K \mathbf{1}[N_{n_k} \leq \ell] \
\stackrel{{\P}}{\to} \  \P(N \leq \ell)   \quad  \text{as }   K \to \infty.$$
\end{enumerate}
\end{lemma}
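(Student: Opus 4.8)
The plan is to prove (i) via a ``boundary--crossing'' decomposition of $N_n$, to prove (ii) by truncating the index set $\Z$ (not the values $Y_z$) and letting the truncation go to infinity after $n$, and to deduce (iii) from (i), (ii) and Corollary \ref{durrettm}.

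For (i): since $h_n=\one{(-n,n]}$ we have $\Lambda(h_n)=\#\ns{z\in\Z:z+Y_z\in(-n,n]}$, while $\#\ns{z\in\Z:z\in(-n,n]}=2n$ and, by Tonelli (every real lies in exactly $2n$ of the intervals $(-n-z,n-z]$), $\E\Lambda(h_n)=2n$; so $N_n$ is integer--valued. A term--by--term comparison of the two counts gives
$$N_n=A_n^{+}+A_n^{-}-B_n^{+}-B_n^{-},$$
with $A_n^{+}:=\#\ns{z>n:z+Y_z\in(-n,n]}$, $A_n^{-}:=\#\ns{z\le-n:z+Y_z\in(-n,n]}$, $B_n^{+}:=\#\ns{-n<z\le n:z+Y_z>n}$, $B_n^{-}:=\#\ns{-n<z\le n:z+Y_z\le-n}$. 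I would bound each of these four nonnegative terms. By shift--invariance of $(Y_z)_{z\in\Z}$, $A_n^{+}$ is dominated by $\#\ns{z\ge1:Y_z\le-z}$ and $A_n^{-}$ by $\#\ns{z\le0:Y_z>-z}$, two fixed a.s.\ finite random variables (with means $\le\E Y_0^{-}$ and $\le1+\E Y_0^{+}$), so these families are tight; and $\E B_n^{+}=\sum_{w=0}^{2n-1}\P(Y_0>w)$, $\E B_n^{-}=\sum_{j=1}^{2n}\P(Y_0\le-j)$ are bounded uniformly in $n$, so $B_n^{\pm}$ are tight by Markov. A signed sum of tight families is tight, giving (i); this is the only place $\E|Y_0|<\infty$ is genuinely used.

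For (ii): fix $a$ and a level $T>a$. Expanding $N_a$ as above and separating the contributions of the indices $|z|\le T$, one gets $N_a=\widehat N_a^{T}+A_a^{>T}$, where $\widehat N_a^{T}$ is a function of $(Y_z)_{|z|\le T}$ alone and $A_a^{>T}:=\#\ns{|z|>T:z+Y_z\in(-a,a]}\ge0$ has $\E A_a^{>T}=\sum_{|z|>T}\P(z+Y_z\in(-a,a])\to0$ as $T\to\infty$ (the whole sum equalling $2a$). Likewise $N_n=\widehat N_n^{T}-C_n^{T}$, where $\widehat N_n^{T}$ collects the crossings of $\pm n$ and the points entering $(-n,n]$ from outside --- all of index $|z|>T$ once $n>T$ --- and $C_n^{T}:=\#\ns{|z|\le T:z+Y_z\notin(-n,n]}\ge0$ has $\P(C_n^{T}\ne0)\le\sum_{|z|\le T}\P(z+Y_z\notin(-n,n])\to0$ as $n\to\infty$ for each fixed $T$. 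Since $\widehat N_a^{T}$ and $\widehat N_n^{T}$ depend on disjoint blocks of the i.i.d.\ family they are independent, and since everything is integer--valued with $A_a^{>T},C_n^{T}\ge0$, passing from $(N_a,N_n)$ to $(\widehat N_a^{T},\widehat N_n^{T})$ changes each probability occurring in (ii) by at most $\P(A_a^{>T}\ne0)+\P(C_n^{T}\ne0)$. A union bound then gives $|\P(N_a\le k,\,N_n\le\ell)-\P(N_a\le k)\P(N_n\le\ell)|\le2\P(A_a^{>T}\ne0)+2\P(C_n^{T}\ne0)$; letting $n\to\infty$ and then $T\to\infty$ proves (ii).

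For (iii): by (i) the family $(N_n)$ is tight, so along some subsequence $(m_j)$ it converges in law to an integer--valued $N$, whence $\P(N_{m_j}\le\ell)\to\P(N\le\ell)$ for all $\ell\in\R$ (the $N_n$ being $\Z$--valued). For each integer $\ell$ the centred indicators $Z_m:=\mathbf{1}[N_m\le\ell]-\P(N_m\le\ell)$ are bounded by $1$, have zero mean, and --- by (ii) with $a=m$ --- satisfy $\E(Z_mZ_n)\to0$ as $n\to\infty$ for each fixed $m$, so Corollary \ref{durrettm} applies. Applying it in turn for $\ell=\ell_1,\ell_2,\dots$ along an enumeration of $\Z$ and passing to the diagonal produces one subsequence $(n_k)$ of $(m_j)$ along which $\tfrac1K\sum_{k\le K}\mathbf{1}[N_{n_k}\le\ell]-\tfrac1K\sum_{k\le K}\P(N_{n_k}\le\ell)\prob0$ for every integer $\ell$; since the second average tends to $\P(N\le\ell)$, this is \eqref{ceastwo}, first for integer $\ell$ and then (as $N_{n_k}$ is integer--valued) for all real $\ell$. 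The main obstacle is (ii): neither $N_a$ nor $N_n$ is a function of finitely many of the $Y_z$ --- a distant point can land in $(-a,a]$, a central point can be sent outside $(-n,n]$ --- so the asymptotic independence must be extracted by truncating the \emph{index} set (a fixed truncation of the $Y_z$ would give an error in $N_n$ that is not uniform in $n$), with the limits taken in the order $n\to\infty$, then $T\to\infty$.
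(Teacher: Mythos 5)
Your proposal is correct, and its overall skeleton matches the paper's: part (i) uses exactly the paper's boundary-crossing decomposition (your $A_n^{\pm},B_n^{\pm}$ are the four terms $T_{(n,\infty)}^{(-n,n]}$, $T_{(-\infty,-n]}^{(-n,n]}$, $T_{(-n,n]}^{(n,\infty)}$, $T_{(-n,n]}^{(-\infty,-n]}$ of \eqref{fourterms}, controlled via $\E|Y_0|<\infty$, the only difference being that you dominate the $A$-terms stochastically while the paper bounds all four in expectation), and part (iii) is the paper's argument verbatim in outline: a subsequential integer-valued weak limit from tightness, Corollary \ref{durrettm} applied to the centred indicators at each integer level, and a diagonal subsequence. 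Where you genuinely depart is part (ii). The paper introduces the $\sigma$-field $\goth{F}_n=\sigma(\{z+Y_z\in[-n,n]\}:z\in\Z)$, observes $\{N_n\le\ell\}\in\goth{F}_n$, and proves $\P(E\mid\goth{F}_n)\to\P(E)$ a.s.\ for every $E$ by reducing to events depending on finitely many $Y_z$ and computing the conditional probability given the finite $\sigma$-algebra $\sigma(\{z+Y_z\in[-n,n]\}:-m\le z\le m)$. You instead truncate the index set: up to the events $\{A_a^{>T}\ne0\}$ and $\{C_n^{T}\ne0\}$, whose probabilities vanish as $T\to\infty$ and $n\to\infty$ respectively, $N_a$ and $N_n$ coincide with functions of the disjoint blocks $(Y_z)_{|z|\le T}$ and $(Y_z)_{|z|>T}$, which are independent, giving the explicit bound $2\P(A_a^{>T}\ne0)+2\P(C_n^{T}\ne0)$ with the limits taken in the order $n\to\infty$, then $T\to\infty$. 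Your version is more quantitative and entirely elementary -- no conditional expectations and no appeal to a generating-class reduction -- and it makes transparent the paper's remark that (ii) requires no moment assumption (your error terms are controlled by the identity $\sum_z\P(z+Y_0\in(-a,a])=2a$ and by pointwise convergence, not by $\E|Y_0|<\infty$); the paper's conditioning argument is shorter on the page but delegates the final convergence to ``an elementary computation''. Both routes are sound.
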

As in the case $d=2$, Lemma \ref{mlemma} parts \eqref{tight} and \eqref{cov}
will allow us to use a weak law of large numbers to prove \eqref{done}.   Let
us note that the assumption that $\E |Y_0| < \infty$  is not necessary for
Lemma \ref{mlemma} part \eqref{cov}.

\begin{proof}[Proof of Proposition \ref{pertone} ($d=1$)]
Apply Lemma \ref{mlemma} \eqref{done} together with Proposition \ref{decayo}
\eqref{ch-decayb}.
\end{proof}

\begin{proof}[Proof of Lemma of \ref{mlemma} \eqref{tight}]

The following simple calculation (an instance of the `mass-transport
principle') shows that $\E \Lambda(0,1] =1$:
\[
\E \Lambda (0,1]
= \sum_{ z \in \Z} \P\big(Y_z +z \in (0,1]\big)
= \sum_{z \in \Z} \P\big(Y_0 \in (-z, -z+1]\big)= 1.
\]
Thus
\begin{equation}
\label{integervalued}
N_n =  \Lambda(-n, n] - 2n \ \text{for all} \  n \in \Z^{+}.
\end{equation}

For $A, B \subseteq \R$, write $$T_A ^B:= \# \ns{z \in A \cap \Z: z + Y_z \in
B};$$ that is, the number of $\Lambda$-points in $B$ that originated from
$A$.
Observe that for $n \in \Z^{+}$,
\begin{equation}
\label{fourterms}
N_n = T_{(n, \infty)} ^{(-n,n]} + T_{(-\infty, -n]} ^{(-n,n]}
- T_{(-n,n]} ^{(n, \infty)} - T_{(-n,n]} ^{(-\infty, -n]}.
\end{equation}
On the other hand, $\E |Y_0| < \infty$ implies easily that $K_+:= \E
T_{(-\infty, 0]} ^{[0, \infty)} < \infty$ and $K_-:= \E T_{[0, \infty)}
^{(-\infty, 0]} < \infty$.  By translation-invariance, each term on the right
side of \eqref{fourterms} is bounded in expectation by one of these
constants; for instance: $\E T_{(n, \infty)} ^{(-n,n]}\leq \E T_{[n, \infty)}
^{(-\infty,n]}=K_-$.  Hence $\E | N_n| \leq 2K_+ +2K_-$ for all $n\in
\Z^{+}$.
 \end{proof}

\begin{proof}[Proof of Lemma \ref{mlemma} \eqref{cov}]
Let $\goth{F}_n:=\sigma(\{z+Y_z \in [-n, n]\}: z \in \Z)$.   We will show
that for any event $E \in \sigma(Y_z:z\in\Z)$, we have
\begin{equation}
\label{asy}
\P(E \mid \goth{F}_n) \to \P(E) \  \text{a.s. as} \ n \to \infty.
\end{equation}
From \eqref{asy}, the result follows, since $\{N_n \leq \ell\}\in\goth{F}_n.$
It suffices to check \eqref{asy}  for $E$ in the generating algebra of events
that depend on only finitely many of the $Y_z$.  But for such an event, say
$E\in\sigma(Y_z:-m\leq z\leq m)$, we observe that $\P(E\mid\goth{F_n})$
equals the conditional probability of $E$ given the {\em finite}
$\sigma$-algebra $\sigma(\{z+Y_z\in[-n,n]\}: -m\leq z\leq m)$, hence the
required convergence follows from an elementary computation.
\end{proof}

\begin{proof}[Proof of Lemma \ref{mlemma} \eqref{done}]
By Lemma \ref{mlemma} \eqref{tight} we may choose an integer-valued $N$ and a
subsequence $(c_n)$ so that $N_{c_n} \stackrel{d}{\to} N$ as $n \to \infty$.
We will show that for all $\ell \in \Z$, there is a further subsequence
$c_{n_k} = :r_k$ such that
\begin{equation}
\label{almost}
\frac{1}{n}\sum_{k=1} ^n \Big[\mathbf{1}[N_{r_k} \leq \ell] - \P(N_{r_k} \leq  \ell) \Big]
 \  \stackrel{\P}{\to} \ 0  \ \text{as} \  n \to \infty.
\end{equation}
Clearly, the result follows from \eqref{almost} and the fact that $N_{r_k}
\stackrel{d}{\to} N$ as $k \to \infty$.

We use Corollary \ref{durrettm} in conjunction with a diagonal argument to
prove \eqref{almost}.   Consider an enumeration of the integers given by
$\ell_1, \ell_2, \ldots$ For each $i \in \Z^{+}$, let  $Z_{k} ^{i}:=
\mathbf{1}[N_{c_k} \leq \ell_i] - \P(N_{c_k} \leq \ell_i)$. By Lemma
\ref{mlemma} \eqref{cov} and Corollary \ref{durrettm}, there exists a
subsequence $c^{1}_{n_k} := r^1_k$ such that \eqref{almost} holds with $r_k$
replaced by $r^1_k$, and $\ell$ replaced by $\ell_1$.   Similarly, we may
choose $(r^2_k)$ to be a subsequence of $(r^1_k)$ so that  \eqref{almost}
holds with $r_k$ replaced by $r^2_k$, and $\ell$ replaced by $\ell_2$;
moreover Corollary \ref{durrettm} assures us that  \eqref{almost} holds with
$r_k$ replaced by $r^2_k$, and $\ell$ replaced by $\ell_1$.   Similarly
define the sequence $(r^{i}_k)$ for each $i \in \Z^{+}$.    By taking the
diagonal sequence $r_k := r^k_k$, we see that \eqref{almost} holds for all
$\ell \in \Z$.
\end{proof}

\subsection{Gaussian zeros in the hyperbolic plane}

The proof of Proposition~\ref{gausszeros} uses the following consequence  of
a result of Peres and Vir\'ag.

\begin{proposition}
\label{mini} If $\GZH$ is the Gaussian zero process on the hyperbolic plane
and $\GZH ^{*}$ is  its Palm version, then $\GZH^{*} \prec \GZH + \delta_0$
and $\GZH + \delta_0 \prec \GZH^{*}$.
\end{proposition}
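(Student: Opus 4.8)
The plan is to identify both $\GZH+\delta_0$ and $\GZH^{*}$ with the point processes of zeros of explicit random power series on $\mathbb{D}$ whose coefficient laws are mutually absolutely continuous, so that the conclusion follows by a pushforward argument. For an analytic function $h$ on $\mathbb{D}$ write $\mathcal{Z}(h)$ for the point process of its zeros. By the results of Peres and Vir\'ag \cite{MR2231337}, $\GZH=\mathcal{Z}(g)$ where $g(z)=\sum_{n\ge 0}a_n z^n$ and the $a_n$ are i.i.d.\ standard complex Gaussian; moreover $\mathcal{Z}(g)$ is a.s.\ simple and locally finite and a.s.\ has no zero at $0$ (since $g(0)=a_0\neq 0$ a.s.). Hence $\GZH+\delta_0=\mathcal{Z}\big(zg(z)\big)$, and $zg(z)=\sum_{n\ge 1}c_n z^n$ with $(c_n)_{n\ge 1}:=(a_0,a_1,a_2,\dots)$ i.i.d.\ standard complex Gaussian.

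Next I would invoke the description of the Palm version of a Gaussian zero set (the consequence of \cite{MR2231337} referred to in the statement; see also \cite{MR2552864}): conditioning $g$ on the event $g(0)=0$, that is on $a_0=0$, gives $g(z)=z\tilde g(z)$ with $\tilde g(z)=\sum_{m\ge 0}a_{m+1}z^m$ again a hyperbolic Gaussian analytic function, and $\GZH^{*}$ is obtained from $\mathcal{Z}(z\tilde g)$ by size-biasing the underlying Gaussian law by $|g'(0)|^2=|\tilde g(0)|^2=|a_1|^2$ (the usual Kac--Rice relation between Palm measures and conditioning for zero sets of Gaussian functions). Size-biasing changes only the single coordinate $a_1$, so $\GZH^{*}=\mathcal{Z}\big(\sum_{n\ge 1}\hat c_n z^n\big)$, where $\hat c_1$ has the size-biased standard complex Gaussian law (Lebesgue density $\pi^{-1}|w|^2 e^{-|w|^2}$ on $\mathbb{C}$), the $\hat c_n$ for $n\ge 2$ are i.i.d.\ standard complex Gaussian, and all the $\hat c_n$ are independent. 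As a consistency check, $\tilde g(0)=\hat c_1\neq 0$ a.s., so $\GZH^{*}$, like $\GZH+\delta_0$, a.s.\ has exactly one point at $0$.

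It then remains to compare $\mathcal{Z}(\sum_{n\ge1}c_n z^n)$ and $\mathcal{Z}(\sum_{n\ge1}\hat c_n z^n)$. The laws of $(c_n)_{n\ge 1}$ and $(\hat c_n)_{n\ge 1}$ on $\mathbb{C}^{\N}$ are product measures that agree in every coordinate except the first, and the two laws of the first coordinate --- the standard complex Gaussian law and its size-biasing --- are mutually absolutely continuous on $\mathbb{C}$, with densities in ratio $w\mapsto|w|^2$, which is finite and strictly positive for Lebesgue-almost-every $w$ and hence almost surely under either law. Therefore the laws of $(c_n)_{n\ge 1}$ and $(\hat c_n)_{n\ge 1}$ are mutually absolutely continuous on $\mathbb{C}^{\N}$. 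Since the map sending a coefficient sequence to the point process of zeros of the associated power series is defined almost surely (the radius of convergence is a.s.\ at least $1$) and is measurable, pushing the two mutually absolutely continuous laws forward by this map gives $\GZH^{*}\prec\GZH+\delta_0$ and $\GZH+\delta_0\prec\GZH^{*}$, as required.

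The step that is not elementary is the input cited in the second paragraph: the identification of $\GZH$ with the hyperbolic Gaussian zero set and of $\GZH^{*}$ with the size-biased power series above. Once that is granted, the remainder is a routine change of measure together with some measure-zero bookkeeping (simplicity, local finiteness, and the precise multiplicity of $0$), which is needed only to ensure that $\GZH+\delta_0$ and $\GZH^{*}$ are genuinely simple point measures with a single atom at the origin.
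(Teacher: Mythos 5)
Your endgame coincides exactly with the paper's: the paper also identifies $\GZH+\delta_0$ with the zero set of a series with coefficients $(0,a_1,a_2,\dots)$ (in law) and $\GZH^{*}$ with the zero set $\bGZH$ of the series with coefficients $(0,\widehat{a}_1,a_2,\dots)$, where $\widehat{a}_1$ is precisely your size-biased complex Gaussian ($|\widehat{a}_1|$ has density $2r^3e^{-r^2}$), and then concludes by mutual absolute continuity of the coefficient sequences, exactly as in your third paragraph. The issue is your second paragraph: the identification $\GZH^{*}\eqd\bGZH$ is the entire content of the proposition, and you invoke it as ``the usual Kac--Rice relation between Palm measures and conditioning,'' attributing it to \cite{MR2231337} and \cite{MR2552864}. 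That statement is true, but it is not what those references state, and it does not follow from the Kac--Rice formula per se: Kac--Rice (even the Hammersley formula for all joint intensities) gives correlation functions, and upgrading that to an identification of the Palm \emph{measure} with the conditioned-and-$|g'(0)|^2$-biased process requires either a determinacy-of-moments argument or a limiting argument over shrinking balls. In particular, conditioning on the null event $\{g(0)=0\}$ has to be interpreted as a limit, and it must be checked that this limit is the Palm version in the sense of \eqref{palmeq}.

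This missing bridge is exactly what the paper's proof supplies: Peres and Vir\'ag's Lemma~18 in \cite{MR2231337} gives only the limit, as $k\to\infty$, of the conditional law of the coefficient sequence given a zero in $B(0,1/k)$ (namely the law of $(0,\widehat{a}_1,a_2,\dots)$); Rouch\'e's theorem is then needed to transfer convergence of coefficients to convergence in law of the zero processes; and Kallenberg's Theorem~12.8 of \cite{MR818219} is needed to identify the limit of these shrinking-ball conditional laws with $\GZH^{*}$. So your proposal is structurally the same as the paper's but treats as a citable black box the one step the paper actually proves. If you either supply a precise reference in which the ``conditioned on $g(0)=0$, size-biased by $|g'(0)|^2$'' description of the Palm measure of GAF zeros is proved, or reproduce the shrinking-ball argument (coefficient limit, Rouch\'e, Kallenberg), the remainder of your argument --- mutual absolute continuity of product laws differing in one coordinate, and pushing forward under the a.s.-defined coefficients-to-zeros map --- is correct and complete.
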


\begin{proof}
Let $\GZH$ be the process of zeros of $\sum_{n=0} ^ {\infty} a_n z^n$, where
the $a_n$'s are i.i.d.\ standard complex Gaussian random variables. Let $E_k$
be the event that $\GZH (B(0,1/k )) >0$.  Peres and Vir\'ag
\cite[Lemma~18]{MR2231337} prove that the conditional law of
$(a_0,a_1,\ldots)$ given $E_k$ converges as $k\to\infty$ to the law of
$(0,\widehat{a}_1,a_2,\ldots)$, where $\widehat{a}_1$ is independent of the
$a_n$'s, and has a rotationally symmetric law with $|\widehat{a}_1|$ having
probability density $2r^3 e^{-r ^2}$.

Let $\bGZH$ be the process of zeros of the power series with coefficients
$(0,\widehat{a}_1,a_2,\ldots)$.  Since the latter sequence is mutually
absolutely continuous in law with $(0,a_1,a_2\ldots)$, we have that $\bGZH$
and $\GZH+\delta_0$ are mutually absolutely continuous in law.

By Rouch\'e's theorem from complex analysis \cite[Ch.\ 8, p.\ 229]{Gamelin},
the above convergence implies that the conditional law of $\GZH$ given $E_k$
converges to the law of $\bGZH$ (the convergence is in distribution with
respect to the vague topology for point processes).  By
\cite[Theorem~12.8]{MR818219} it follows that $\bGZH \eqd \GZH^{*}.$
\end{proof}

\begin{proof}[Proof of Proposition \ref{gausszeros}]
It follows from Proposition \ref{mini} and Theorems~\ref{thm-instol-stat-eq}
and \ref{suff} with Remark \ref{gen}    that the  Gaussian zero process on
the  hyperbolic plane is insertion-tolerant and deletion-tolerant.
\end{proof}

\section*{Acknowledgments}
We thank Omer Angel and Yuval Peres for many valuable conversations.   Terry
Soo thanks the organizers of the 2010 PIMS Summer School in Probability.

\bibliography{referencesu}
\bibliographystyle{abbrv}

\end{document}